\def\T{{\mathbb T}}
\def\Z{{\mathbb Z}}
\def\N{{\mathbb N}}
\def\S{{\mathcal S}}
\def\cont{\mathfrak{c}}
\def\Prm{{\mathbb P}}
\def\snb{strongly unbounded}
\newtheorem{thm}{Theorem}[section]
\newtheorem{theorem}[thm]{Theorem}
\newtheorem{corollary}[thm]{Corollary}
\newtheorem{proposition}[thm]{Proposition}
\newtheorem{lemma}[thm]{Lemma}
\newtheorem{fact}[thm]{Fact}
\theoremstyle{definition}
\newtheorem{Def}[thm]{Definition}
\newtheorem{definition}[thm]{Definition}
\newtheorem{remark}[thm]{Remark}
\newtheorem{question}[thm]{Question}
\newtheorem{claim}{Claim}
\newtheorem{problem}[thm]{Problem}
\def\HM{Hartman-Mycielski}
\def\dpc{dp-connected}
\def\HMi{\mathsf{HM}}
\title[A complete solution of Markov's problem]
{A complete solution of Markov's problem on connected group topologies}
\author[D. Dikranjan]
{Dikran Dikranjan}
\address[D. Dikranjan]{\hfill\break
Dipartimento di Matematica e Informatica\\
Universit\`{a} di Udine\\
Via delle Scienze  206, 33100 Udine\\
Italy}
\email{dikran.dikranjan@uniud.it} 
\author[D. Shakhmatov]{Dmitri Shakhmatov}
\address[D. Shakhmatov]{\hfill\break
Division of Mathematics, Physics and Earth Sciences\\
Graduate School of Science and Engineering\\
Ehime University\\
Matsuyama 790-8577\\
Japan}
\email{dmitri.shakhmatov@ehime-u.ac.jp}
\keywords{(locally) connected group, (locally) pathwise connected group, unconditionally closed subgroup, Markov's problem, Hartman-Mycielski construction, Ulm-Kaplanski invariants.}
\subjclass[2010]{Primary: 22A05; Secondary: 20K25, 20K45, 54A25, 54D05, 54H11}
\begin{document}
\begin{abstract}
Every proper closed subgroup of a connected Hausdorff group must have index at least $\cont$, the cardinality of the continuum. 70 years ago Markov conjectured that a group $G$ can be equipped with a connected Hausdorff group topology provided that every subgroup of $G$ which is closed in {\em all\/} Hausdorff group topologies on $G$ has index at least $\cont$.  Counter-examples in the non-abelian case were provided 25 years ago by Pestov and Remus, yet the problem whether Markov's Conjecture holds for abelian groups $G$ remained open. We resolve this problem in the positive.
\end{abstract}
\maketitle

As usual, $\Z$ denotes the group of integers, $\Z(n)$ denotes the cyclic group of order $n$, $\N$ denotes the set of natural numbers,
$\Prm$ denotes the set of all prime numbers, $|X|$ denotes the cardinality of a set $X$,  $\cont$ denotes the cardinality of the continuum and $\omega$ denotes the cardinality of $\N$.

Let $G$ be an abelian group. For a cardinal $\sigma$, we use $G^{(\sigma)}$ to denote the direct sum of $\sigma$ many copies of the group $G$. For $m\in \N$, we let 
$$
mG=\{mg:g\in G\}\  \ \mbox{ and }\  \ G[m]=\{g\in G: mg = 0\},
$$ 
where $0$ is the zero element of $G$.
The group $G$ is {\em bounded\/} (or has {\em finite exponent\/}) if $mG=\{0\}$ for some integer $m\ge 1$;
otherwise, $G$ is said to be {\em unbounded\/}. We denote by 
\begin{equation}
\label{eq:torsion:part}
t(G)=\bigcup_{m\in\N} G[m]
\end{equation}
 the {\em torsion subgroup\/} of $G$. The group $G$ is {\em torsion\/} if $t(G)=G$. 

As usual, we write $G\cong H$ when groups $G$ and $H$ are isomorphic.

{\em All topological groups and all group topologies are assumed to be Hausdorff.\/}

\section{Markov's problem for abelian groups}

Markov \cite{Mar1,Mar} says that a subset $X$ of a group $G$ is {\em unconditionally closed\/} in $G$ if $X$ is closed in every Hausdorff group topology on $G$.

Every proper closed subgroup of a connected group must have index at least $\cont$.\footnote{Indeed, if $H$ is a proper closed subgroup of a connected group $G$, 
the the quotient space $G/H$ is non-trivial, connected and completely regular. Since completely regular spaces of size less than $\cont$ are disconnected, this shows that $|G/H|\ge\cont$.}
Therefore, if a group admits a connected group topology, then all its proper unconditionally closed subgroups necessarily have index at least $\cont$; see \cite{M}. Markov \cite[Problem 5, p. 271]{M} asked if the converse is also true.

\begin{problem}
\label{Markov:question} 
If all proper unconditionally closed subgroups of a group $G$ have index at least $\cont$, does then $G$ admit a connected group topology? 
\end{problem}

\begin{definition}
For 
brevity,
a group satisfying Markov's condition, namely having all proper unconditionally closed subgroups of index at least $\cont$, shall be called an {\em $M$-group\/} (the abbreviation for {\em Markov group\/}).
\end{definition}

Adopting this terminology, Markov's Problem \ref{Markov:question} reads: {\em Does every $M$-group admit a connected group topology?\/}

A negative answer to Problem \ref{Markov:question} was provided first by Pestov \cite{Pe}, then a much simpler counter-example was found by Remus \cite{Re}. Nevertheless, the question remained open in the abelian case, as explicitly stated in \cite{Re} and later in \cite[Question 3.5.3]{CHR} 
as well as in \cite[Question 3G.1 (Question 517)]{Co}.

\begin{question}
\label{abelian:Markov}
Let $G$ be an abelian group all proper unconditionally closed subgroups of which have index at least $\cont$. Does $G$ have a connected group topology?
\end{question} 

An equivalent re-formulation of this question using the notion of an $M$-group reads: {\em Does every abelian $M$-group admit a connected group topology?\/}

Since the notion of an unconditionally closed subset of a group $G$ involves checking closedness of this set in {\em every\/} Hausdorff group topology on $G$, in practice it is hard to decide if a given subgroup of $G$ is unconditionally closed in $G$ or not. In other words, there is no clear procedure that would allow one to check whether a given group is an $M$-group. Our next proposition provides an easy algorithm for verifying whether an abelian group is an $M$-group. Its proof, albeit very short, is based on the fundamental fact that all unconditionally closed subsets of abelian groups are algebraic \cite[Corollary 5.7]{DS-reflection}. 

\begin{proposition}
\label{reformulation:of:Markov:condition}
An abelian group  $G$ is an $M$-group if and only if, for every $m\in\N$, either $mG =\{0\}$ or  $|mG| \ge \cont$. In particular, an abelian group $G$ of infinite exponent  is an $M$-group precisely when $|mG|\ge \cont$ for all integers $m\ge 1$.
\end{proposition}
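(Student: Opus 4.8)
The plan is to use the cited fact that every unconditionally closed subset of an abelian group is algebraic, i.e.\ a finite union of cosets of solution sets of systems of equations; since we are dealing with subgroups, these reduce to sets of the form $G[m]$ and intersections of such, but for a single "elementary algebraic subgroup" the relevant examples are precisely the subgroups $G[m]=\{g\in G:mg=0\}$ together with $G$ itself. First I would record the easy (necessity) direction: suppose $G$ is an $M$-group and $m\in\N$ is such that $mG\neq\{0\}$; I want to show $|mG|\ge\cont$. The key observation is that the map $g\mapsto mg$ induces an isomorphism $G/G[m]\cong mG$, so $|mG|=[G:G[m]]$. Now $G[m]$ is the solution set of the equation $mx=0$, hence algebraic, hence unconditionally closed. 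If $mG\neq\{0\}$ then $G[m]\neq G$, so $G[m]$ is a \emph{proper} unconditionally closed subgroup, and the $M$-group hypothesis forces $[G:G[m]]\ge\cont$, i.e.\ $|mG|\ge\cont$, as desired.

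For the sufficiency (converse) direction, assume that for every $m\in\N$ either $mG=\{0\}$ or $|mG|\ge\cont$, and let $H$ be an arbitrary proper unconditionally closed subgroup of $G$; I must show $[G:H]\ge\cont$. Here I would invoke \cite[Corollary 5.7]{DS-reflection}: $H$, being unconditionally closed, is algebraic. Since $H$ is a proper subgroup, it cannot be a finite union of cosets with $G$ itself among them, so the algebraic description of $H$ yields that $H$ is contained in a proper elementary algebraic subgroup, and one reduces to the case that $H\supseteq G[m]$ fails to be everything — more precisely, the algebraic structure forces $G[m]\subseteq H$ for some $m$ with $G[m]\neq G$, equivalently $mG\neq\{0\}$. (This is the step that needs the precise statement of what "algebraic subgroup of an abelian group" means; I expect the authors to have a clean lemma, perhaps that every proper algebraic subgroup contains $G[m]$ for some $m\ge 1$ with $mG\neq\{0\}$.) Then $[G:H]\ge[G:$ (the proper algebraic subgroup)$]$ is not quite the right inequality — rather $H\supseteq G[m]$ gives $[G:H]\le[G:G[m]]$, which is the wrong direction, so instead I would argue that a proper algebraic subgroup $A$ satisfies $[G:A]\ge\cont$ by bounding below the number of cosets it omits, using $|mG|\ge\cont$.

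Let me state this sufficiency argument more carefully. A proper algebraic subgroup has the form $A=\bigcap_{i} G[m_i]$ intersected with finitely many coset conditions, but for a subgroup the coset conditions are vacuous or contradictory, so $A=G[m]$ where $m=\mathrm{lcm}(m_i)$ (or a similar finite combination), and properness of $A$ means $G[m]\neq G$, i.e.\ $mG\neq\{0\}$. Since $H$ is unconditionally closed and proper, $H$ is contained in such a proper $A=G[m]$, so $[G:H]\ge[G:G[m]]=|mG|\ge\cont$ by hypothesis. The ``In particular'' clause is then immediate: if $G$ has infinite exponent, then $mG\neq\{0\}$ for every $m\ge1$ by definition of unbounded, so Markov's condition becomes exactly ``$|mG|\ge\cont$ for all $m\ge1$''. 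The main obstacle, as flagged above, is pinning down the exact normal form of an algebraic subgroup of an abelian group so that ``proper algebraic subgroup $\subseteq$ $G[m]$ for some $m$ with $mG\neq\{0\}$'' is justified rather than asserted; everything else is the bookkeeping isomorphism $G/G[m]\cong mG$ and the observation that $G[m]$ is always unconditionally closed.
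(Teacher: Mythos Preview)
Your proposal is correct and follows essentially the same route as the paper: both directions hinge on the isomorphism $G/G[m]\cong mG$ and on the fact that proper unconditionally closed subgroups of an abelian group are controlled by the subgroups $G[m]$. The paper's proof is terser because, in addition to \cite[Corollary~5.7]{DS-reflection}, it invokes \cite[Lemma~3.3]{DS\_JA} to get the sharp statement that any proper unconditionally closed subgroup \emph{equals} some $G[m]$; this is precisely the ``normal form'' lemma you flagged as the one missing ingredient. With that citation in hand your back-and-forth about containment versus equality disappears, and your argument collapses to the paper's two-line proof.
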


\begin{proof}
According to \cite[Corollary 5.7]{DS-reflection} and  \cite[Lemma 3.3]{DS_JA},  a proper unconditionally closed subgroup $H$ of $G$ has the form $H = G[m]$ for some 
integer $m\ge 1$.
Since 
$G/G[m]\cong mG$, the index of $H$ coincides with $|G/H| = |G/G[m]|= |mG|$.
\end{proof}

According to the Pr\"ufer theorem \cite[Theorem 17.2]{Fu}, a non-trivial abelian group $G$ of finite exponent is a direct sum of cyclic subgroups 
\begin{equation}
\label{eq:1}
G = \bigoplus _{p \in \pi(G)} \bigoplus_{i=1}^{m_p} \Z(p^i)^{(\alpha_{p,i})},
\end{equation}
where $\pi(G)$ is a non-empty finite set of primes and the cardinals $\alpha_{p,i}$ are known as {\em Ulm-Kaplanski invariants\/} of $G$. Note that while some of them may be equal to zero, the cardinals $\alpha_{p,m_p}$ 
must be positive; they are called {\em leading Ulm-Kaplanski invariants\/} of $G$.  

Based on Proposition \ref{reformulation:of:Markov:condition}, one can re-formulate the Markov property for abelian groups of finite exponent in terms of their Ulm-Kaplanski invariants: 

\begin{proposition}
\label{Kirku*}
A non-trivial abelian group $G$ of finite exponent is an $M$-group if and only if all leading Ulm-Kaplanski invariants of $G$ are at least $\cont$. 
\end{proposition}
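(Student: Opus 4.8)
The plan is to reduce the statement to Proposition~\ref{reformulation:of:Markov:condition} and then compute the subgroups $mG$ explicitly from the Pr\"ufer decomposition~\eqref{eq:1}. By Proposition~\ref{reformulation:of:Markov:condition}, $G$ is an $M$-group if and only if $|mG|\ge\cont$ for every $m\in\N$ with $mG\ne\{0\}$, so the first step is to write $mG$ down. Fix $m\in\N$ and, for $p\in\pi(G)$, let $k_p\ge 0$ be the power of $p$ dividing $m$. Since $m\Z(p^i)=p^{\min(k_p,i)}\Z(p^i)\cong\Z\bigl(p^{\max(i-k_p,\,0)}\bigr)$ and multiplication by $m$ commutes with direct sums, \eqref{eq:1} gives
\[
mG\;\cong\;\bigoplus_{p\in\pi(G)}\ \bigoplus_{i=1}^{m_p}\Z\bigl(p^{\max(i-k_p,\,0)}\bigr)^{(\alpha_{p,i})}.
\]
In particular $mG=\{0\}$ exactly when $k_p\ge m_p$ for all $p\in\pi(G)$, i.e.\ when $\exp(G)\mid m$; and whenever $mG\ne\{0\}$ there is $p\in\pi(G)$ with $k_p\le m_p-1$, so the leading summand $\Z\bigl(p^{\,m_p-k_p}\bigr)^{(\alpha_{p,m_p})}$ with $m_p-k_p\ge 1$ is non-trivial and appears as a direct summand of $mG$.

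Next I would single out the cheapest witnesses. For $p\in\pi(G)$ set $m^{(p)}=\exp(G)/p\in\N$; then the power of $q$ dividing $m^{(p)}$ is $m_q$ for $q\in\pi(G)\setminus\{p\}$ and is $m_p-1$ for $q=p$, so the displayed formula collapses to $m^{(p)}G\cong\Z(p)^{(\alpha_{p,m_p})}$, which is non-trivial because $\alpha_{p,m_p}>0$. Comparing the cardinalities of the summands above, for every $m$ with $mG\ne\{0\}$ and for the relevant $p$ one has $|mG|\ge\bigl|\Z(p^{\,m_p-k_p})^{(\alpha_{p,m_p})}\bigr|\ge\bigl|\Z(p)^{(\alpha_{p,m_p})}\bigr|=|m^{(p)}G|$. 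Hence
\[
\min\bigl\{|mG|:m\in\N,\ mG\ne\{0\}\bigr\}\;=\;\min_{p\in\pi(G)}\bigl|\Z(p)^{(\alpha_{p,m_p})}\bigr|,
\]
and therefore, by Proposition~\ref{reformulation:of:Markov:condition}, $G$ is an $M$-group if and only if $\bigl|\Z(p)^{(\alpha_{p,m_p})}\bigr|\ge\cont$ for every $p\in\pi(G)$.

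Finally I would translate the last condition into a statement about the leading invariants, which is the only slightly delicate point. If $\alpha_{p,m_p}$ is infinite then $\bigl|\Z(p)^{(\alpha_{p,m_p})}\bigr|=\alpha_{p,m_p}$, so $\bigl|\Z(p)^{(\alpha_{p,m_p})}\bigr|\ge\cont$ is literally $\alpha_{p,m_p}\ge\cont$; if $\alpha_{p,m_p}$ is finite (and positive) then $\bigl|\Z(p)^{(\alpha_{p,m_p})}\bigr|=p^{\alpha_{p,m_p}}$ is finite, so both $\bigl|\Z(p)^{(\alpha_{p,m_p})}\bigr|\ge\cont$ and $\alpha_{p,m_p}\ge\cont$ fail. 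In either case $\bigl|\Z(p)^{(\alpha_{p,m_p})}\bigr|\ge\cont\iff\alpha_{p,m_p}\ge\cont$, and combined with the previous paragraph this finishes the proof. The main obstacle is essentially bookkeeping: getting the formula for $mG$ right, choosing the witnesses $m^{(p)}=\exp(G)/p$, and keeping the finite and infinite cases for $\alpha_{p,m_p}$ separate in the final cardinality comparison.
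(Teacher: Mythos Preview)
Your proof is correct and follows essentially the same approach as the paper: both reduce to Proposition~\ref{reformulation:of:Markov:condition}, both use the witnesses $m^{(p)}=\exp(G)/p$ (the paper's $k_p$) with $m^{(p)}G\cong\Z(p)^{(\alpha_{p,m_p})}$, and both handle the finite/infinite dichotomy for $\alpha_{p,m_p}$ in the same way. The only cosmetic difference is that you compute $mG$ explicitly from the Pr\"ufer decomposition, whereas the paper argues via divisibility (replacing $m$ by $\gcd(m,\exp(G))$ and noting that this divides some $k_p$, whence $k_pG\subseteq mG$); both routes yield the same inequality $|mG|\ge|k_pG|$.
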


\begin{proof} Write the group $G$ as in \eqref{eq:1} and let $k = \prod_{q\in \pi(G)} q^{m_q}$ be the exponent of $G$. In order to compute the leading Ulm-Kaplanski invariant
$\alpha_{p,m_p}$ for $p\in \pi(G)$, let 
$$
k_p = \frac{k}{p}= p^{m_p-1}\cdot \prod_{q\in \pi(G)\setminus \{p\}} q^{m_q}.
$$ 
Then $k_pG \cong \Z(p)^{(\alpha_{p,m_p})}$, so  
\begin{equation}
\label{eq:2}
1< |k_pG|=\begin{cases} p^{\alpha_{p,m_p}}\ \mbox{ if $\alpha_{p,m_p}$
is finite}\\
\alpha_{p,m_p}
\ \mbox{ if $\alpha_{p,m_p}$
is infinite}.
\end{cases}
\end{equation}

If $G$ is an $M$-group, then $|k_pG | \ge \cont$ by \eqref{eq:2} and Proposition \ref{reformulation:of:Markov:condition}, so $\alpha_{p,m_p}\ge \cont$ as well. 

Now suppose that all leading Ulm-Kaplanski invariants of $G$ are at least $\cont$. Fix  $m\in \N$ with $|mG| > 1$. There
exists at least one $p \in \pi(G)$ such that $p^{m_p}$ does not divide $m$.  Let $d$ be the greatest common divisor of $m$ and $k$. Then $mG = dG$, hence from now on we can assume without loss of generality that $m=d$ divides $k$.  As $p^{m_p}$ does not divide $m$, it follows that $m$ divides $k_p$.  Therefore, $k_p G$ is a subgroup of $mG$, and so $|mG| \geq |k_p G| = \alpha_{p,m_p} \geq \cont$ by \eqref{eq:2}. 
This shows that, for every $m\in\N$, either $mG =\{0\}$ or  $|mG| \ge \cont$. Therefore, $G$ is an $M$-group by Proposition \ref{reformulation:of:Markov:condition}.
\end{proof}

Kirku \cite{K} characterized the abelian groups of finite exponent admitting a connected group topology.

\begin{theorem}\label{Kirku}{\cite[Theorem on p. 71]{K}}
If all leading Ulm-Kaplanski invariants of a non-trivial abelian group $G$ of finite exponent are at least $\cont$, then $G$ admits a pathwise connected, locally pathwise connected  group topology.
\end{theorem}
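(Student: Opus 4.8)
The plan is to realize the required topology on $G$ as a quotient, by a closed subgroup, of a Hartman--Mycielski group. First I would reduce to a single prime: writing $G=\bigoplus_{p\in\pi(G)}G_p$ as the direct sum of its finitely many $p$-components, it suffices to equip each $G_p$ with a pathwise connected, locally pathwise connected group topology and take the product topology on $G$, since a finite product of pathwise connected (resp.\ locally pathwise connected) spaces has that property. Fix $p$, write $G_p=\bigoplus_{i=1}^{m}\Z(p^i)^{(\alpha_i)}$ with $m=m_p$ and leading invariant $\alpha_m=\alpha_{p,m_p}\ge\cont$, and split $G_p=H\oplus\overline H$, where $H=\bigoplus_{i:\,\alpha_i\ge\cont}\Z(p^i)^{(\alpha_i)}$ gathers the components whose invariant is already at least $\cont$ (so $H\ne\{0\}$, as it contains $\Z(p^m)^{(\alpha_m)}$), and $\overline H=\bigoplus_{i:\,0<\alpha_i<\cont}\Z(p^i)^{(\alpha_i)}$ gathers the rest; note that $|\overline H|<\cont$, being a finite direct sum of groups of cardinality $<\cont$.

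Next I would apply the Hartman--Mycielski construction to the discrete group $\Gamma:=H\oplus\Z(p^m)^{(\cont)}$, obtaining $K:=\HMi(\Gamma)$, the group of $\Gamma$-valued step functions on $[0,1)$ topologized so that the basic identity neighbourhoods are the sets of step functions vanishing outside a set of Lebesgue measure $<\varepsilon$. I would use three standard properties of this construction. The map $(s,f)\mapsto f\cdot\chi_{[0,s)}$ is a continuous contraction of $K$ onto $\{0\}$ carrying each basic identity neighbourhood into itself, so $K$ is pathwise connected and locally pathwise connected. The functor $\HMi$ preserves finite direct products, so $K=\HMi(H)\times\HMi(\Z(p^m)^{(\cont)})$ as topological groups. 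And every step function is uniquely a finite sum $\sum_{b}v_b\,\chi_{[b,1)}$, giving $\HMi(\Delta)\cong\Delta^{(\cont)}$ abstractly for every group $\Delta$; hence $\HMi(H)\cong H^{(\cont)}\cong H$ (using $\alpha_i\cdot\cont=\alpha_i$ for $\alpha_i\ge\cont$) and $\HMi(\Z(p^m)^{(\cont)})\cong\Z(p^m)^{(\cont)}$.

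Then I would produce the subgroup to collapse. For each $i<m$ with $0<\alpha_i<\cont$, the subgroup $p^i\Z(p^m)$ of $\Z(p^m)$ has order $p^{m-i}$, with $\Z(p^m)/p^i\Z(p^m)\cong\Z(p^i)$; I pick pairwise disjoint sets $A_i$ of coordinates of $\Z(p^m)^{(\cont)}$ with $|A_i|=\alpha_i$ (possible since $\sum_i\alpha_i=|\overline H|<\cont$) and let $C$ be the subgroup $\bigoplus_i(p^i\Z(p^m))^{(\alpha_i)}$ of $\Z(p^m)^{(\cont)}$ supported on $\bigcup_iA_i$, viewed inside $\HMi(\Z(p^m)^{(\cont)})$ through the \emph{constant} step functions. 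A nonzero constant step function is supported on all of $[0,1)$, so $C$ is a discrete, hence closed, subgroup; collapsing it converts $\alpha_i$ of the $\cont$ copies of $\Z(p^m)$ into copies of $\Z(p^i)$, for each relevant $i$, while $\cont$ copies of $\Z(p^m)$ survive, so $\HMi(\Z(p^m)^{(\cont)})/C\cong\Z(p^m)^{(\cont)}\oplus\overline H$. Taking $N:=\{0\}\times C$, a closed subgroup of $K$, I get
\[
K/N\ \cong\ \HMi(H)\times\bigl(\Z(p^m)^{(\cont)}\oplus\overline H\bigr)\ \cong\ H\oplus\Z(p^m)^{(\cont)}\oplus\overline H\ \cong\ H\oplus\overline H\ =\ G_p,
\]
where the last isomorphism holds because $\Z(p^m)^{(\cont)}$ is absorbed by $\Z(p^m)^{(\alpha_m)}$ (as $\alpha_m\ge\cont$). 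Since $N$ is closed, $K/N$ is a Hausdorff topological group, and being a continuous open image of the pathwise connected, locally pathwise connected group $K$ it is pathwise connected and locally pathwise connected; transporting this topology along $K/N\cong G_p$ finishes the single-prime case, and with it the theorem.

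The step carrying the real weight is the third one: $C$ has to be chosen \emph{simultaneously} closed in $K$ and so as to collapse exactly the Ulm--Kaplanski invariants that turn the inflated homogeneous group $\Z(p^m)^{(\cont)}$ into $\Z(p^m)^{(\cont)}\oplus\overline H$, leaving everything else intact. What makes this painless is the observation that the constant step functions form a \emph{discrete} (hence closed) subgroup of any $\HMi(\Gamma)$, so subgroups of them may be collapsed at no topological cost, together with the arithmetic $\cont+\cont=\cont$, which is exactly what lets the low part $\overline H$ --- which, of cardinality $<\cont$, could never carry a pathwise connected topology on its own --- sit harmlessly beside the surviving $\cont$-dimensional homogeneous summand. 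The remaining verifications --- that the quotient has precisely the prescribed invariants, and the standard topological properties of the Hartman--Mycielski construction --- are routine but should be written out with care.
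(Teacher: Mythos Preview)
The paper does not prove this theorem; it is quoted verbatim from Kirku's paper \cite{K} and used as a black box (together with Proposition~\ref{Kirku*}) to obtain Corollary~\ref{finite:index:corollary}. So there is no ``paper's own proof'' to compare against.

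That said, your argument is essentially correct and gives a self-contained proof using only machinery already present in the paper. The reduction to a single prime is immediate; the key idea --- realizing $G_p$ as a quotient of a Hartman--Mycielski group by a \emph{discrete} (hence closed) subgroup of constant step functions --- is sound. The verifications you flag are all routine: the product decomposition $\HMi(A\times B)\cong\HMi(A)\times\HMi(B)$ holds because $O_{A\times B}(\{0\},\varepsilon)$ is sandwiched between $O_A(\{0\},\varepsilon/2)\times O_B(\{0\},\varepsilon/2)$ and $O_A(\{0\},\varepsilon)\times O_B(\{0\},\varepsilon)$; the constant functions are discrete in $\HMi(\Gamma)$ for discrete $\Gamma$ since a nonzero constant misses every $O(\{0\},\varepsilon)$ with $\varepsilon<1$; and the algebraic quotient computation is a direct coordinate-wise check. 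Pathwise connectedness and local pathwise connectedness pass to quotients by closed subgroups because the quotient map is continuous, open and surjective.

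One small redundancy: you could drop the extra $\Z(p^m)^{(\cont)}$ summand from $\Gamma$ and work directly with $\Gamma=H$, since $H$ already contains $\Z(p^m)^{(\alpha_m)}$ with $\alpha_m\ge\cont$; the subgroup $C$ can then be carved out of the constant copy of $\Z(p^m)^{(\alpha_m)}$ inside $\HMi(H)$. This does not affect correctness, only economy.
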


In view of Proposition \ref{Kirku*}, Kirku's theorem can be considered as a partial positive answer to Question \ref{abelian:Markov}.

\begin{corollary}
\label{finite:index:corollary}
Each abelian $M$-group of finite exponent admits a pathwise connected, locally pathwise connected group topology.
\end{corollary}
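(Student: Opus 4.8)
The plan is essentially to combine the two immediately preceding results. By Proposition~\ref{Kirku*}, a non-trivial abelian group $G$ of finite exponent is an $M$-group precisely when all of its leading Ulm-Kaplanski invariants are at least $\cont$. On the other hand, Theorem~\ref{Kirku} of Kirku guarantees that any such group — that is, any non-trivial abelian group of finite exponent all of whose leading Ulm-Kaplanski invariants are at least $\cont$ — carries a pathwise connected, locally pathwise connected group topology. So the proof consists of chaining these implications together.

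More concretely, I would argue as follows. Let $G$ be an abelian $M$-group of finite exponent. If $G$ is trivial, the statement is vacuous (or one simply notes the trivial group with its unique topology satisfies the conclusion degenerately), so assume $G$ is non-trivial. Since $G$ has finite exponent and is an $M$-group, Proposition~\ref{Kirku*} applies and tells us that all leading Ulm-Kaplanski invariants of $G$ are at least $\cont$. Now invoke Theorem~\ref{Kirku}: the hypothesis of that theorem is exactly the conclusion we just extracted from Proposition~\ref{Kirku*}, so $G$ admits a pathwise connected, locally pathwise connected group topology. This completes the proof.

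The only point requiring a word of care is the degenerate case of the trivial group, and how one wishes to phrase the statement so that it reads smoothly — one can either tacitly restrict to non-trivial $G$ (the interesting case) or observe that the singleton group is trivially (locally) pathwise connected. I do not anticipate any genuine obstacle here: the corollary is a direct, essentially one-line deduction, and both ingredients — the characterization in Proposition~\ref{Kirku*} and Kirku's construction in Theorem~\ref{Kirku} — are available to us. The substance of the work lies entirely in those two prior results; the corollary merely records their conjunction in the language of $M$-groups.
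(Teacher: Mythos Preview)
Your proposal is correct and matches the paper's proof essentially verbatim: handle the trivial group separately (the paper notes its unique topology is pathwise connected and locally pathwise connected), and for non-trivial $G$ chain Proposition~\ref{Kirku*} with Theorem~\ref{Kirku}. There is nothing to add.
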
 

\begin{proof}
Obviously, the only group topology on the trivial group is both pathwise connected and locally pathwise connected. For a non-trivial abelian group of finite exponent, the conclusion follows from Proposition \ref{Kirku*} and Theorem \ref{Kirku}.
\end{proof}

The main goal of this paper is to offer a positive solution to Markov's problem in the case complementary to Corollary \ref{finite:index:corollary}, namely, for the class of abelian groups of infinite exponent. In order to formulate our main result, we shall need to introduce an intermediate property between pathwise connectedness and connectedness.

\begin{definition}
We shall say that a space $X$ is {\em densely pathwise connected\/} (abbreviated to 
{\em \dpc\/}) provided that $X$ has a dense pathwise connected subspace.
\end{definition}

Clearly,
\begin{equation}
\label{connected:chain}
\mbox{pathwise connected }\to\mbox{ \dpc\ }\to \mbox{ connected}.
\end{equation}

Obviously, a topological group $G$ is \dpc\ if and only if the arc component of the identity of $G$ is dense in $G$.

As usual, given a topological property $\mathscr{P}$, we say that a space $X$ is {\em locally  $\mathscr{P}$\/} provided that for every point $x\in X$ and each open neighbourhood $U$ of $x$ one can find an open neighbourhood $V$ of $x$ such that the closure of $V$ in $X$ is contained in $U$ and has property $\mathscr{P}$. It follows from this definition and \eqref{connected:chain} that
$$
\mbox{locally pathwise connected }\to\mbox{ locally \dpc\ }\to \mbox{ locally connected}.
$$

\begin{theorem}\label{Main}
Every abelian $M$-group $G$ of infinite exponent admits a \dpc, locally \dpc\  group topology.
\end{theorem}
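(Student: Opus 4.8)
The plan is to reduce the general case to a manageable "core" structure and then build the topology by an explicit construction. Since $G$ is an abelian $M$-group of infinite exponent, Proposition~\ref{reformulation:of:Markov:condition} gives $|mG|\ge\cont$ for every integer $m\ge 1$. First I would analyze the structure of $G$ using standard abelian group theory: split off a divisible part, and among the reduced part isolate the torsion-free rank and the $p$-primary components. The condition $|mG|\ge\cont$ for all $m$ forces one of the following "witnesses of largeness" to be available: either the torsion-free rank of $G$ is at least $\cont$, or $G$ has a divisible subgroup of size $\ge\cont$ (a $\Q^{(\cont)}$ or a $\Z(p^\infty)^{(\cont)}$), or $G$ has a bounded-order section of size $\ge\cont$ in cofinally many "levels" $p^i$. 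The key point is that in each of these situations $G$ contains, as a direct summand or at least as a suitable subgroup, a large group on which a (locally) pathwise connected topology is already known — e.g. $\R^{(\cont)}$-type topologies via $\Q$-vector-space structure, the circle-based topologies for divisible torsion parts, or Kirku's Theorem~\ref{Kirku} applied levelwise for the bounded sections.

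Second, I would handle the splitting. Write $G=D\oplus R$ with $D$ divisible and $R$ reduced. If $D$ is large (size $\ge\cont$), it already carries a pathwise connected, locally pathwise connected topology (it is a product/sum of copies of $\Q$, equivalently a $\Q$-vector space, put the topology induced from $\R^{\dim}$, together with the $\Z(p^\infty)$-parts which embed densely into tori), and $R$ with any precompact group topology yields a product that is \dpc\ and locally \dpc, since density of the arc component is preserved under taking products with an arbitrary factor once one factor has dense arc component. So the genuinely hard case is when $G$ is reduced (or has small divisible part), and the largeness must be extracted from $R$ itself.

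Third — and this is where the real work lies — for reduced $G$ of infinite exponent with $|mG|\ge\cont$ for all $m$, I would exploit the Hartman–Mycielski construction $\HMi$ advertised in the keywords. The idea: for a suitable subgroup or quotient structure, $\HMi(G)$ (or a variant functor producing a group built from step functions $[0,1)\to G$) is always pathwise connected and locally pathwise connected, and it contains an isomorphic copy of $G$ as a (not necessarily closed) subgroup; the obstacle is that this copy need not be dense, and the ambient group need not be Hausdorff-compatible with the required cardinality constraints. So I expect the main technical step to be: show that under the $M$-condition one can choose a Hausdorff group topology $\tau$ on $G$ such that the completion, or an $\HMi$-type enlargement, has $G$ sitting densely inside a pathwise connected group. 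Concretely, one arranges a surjection (or a filtration) onto a group of the form treated by Kirku together with a torsion-free/divisible piece, transfers the pathwise connected topology back along a section, and checks Hausdorffness using that the unconditionally closed subgroups $G[m]$ all have large index — which is exactly what prevents the pulled-back topology from collapsing.

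Finally I would assemble the pieces: verify that the constructed topology is Hausdorff (using Proposition~\ref{reformulation:of:Markov:condition} to rule out the bad quotients), that it is a group topology (continuity of the operations, typically automatic for $\HMi$-type and product constructions), that the arc component of $0$ is dense (from the pathwise connected dense piece), and that local \dpc\ holds (each basic neighborhood contains a smaller one whose closure is a translate/section of the pathwise connected building block, hence \dpc). The expected main obstacle is the reduced unbounded case: combining the ``levelwise'' bounded witnesses (Kirku) with a possible torsion-free or divisible witness into a \emph{single} Hausdorff topology whose arc component is dense, while never creating a proper closed subgroup of index $<\cont$ — this balancing act between density of the arc component and Hausdorffness is precisely what the $M$-condition is calibrated to permit.
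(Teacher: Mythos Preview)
Your proposal has a concrete error and a missing key idea.

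The error is in your second step. You claim that if $D$ is pathwise connected and locally pathwise connected, then $D\times R$ with \emph{any} precompact topology on $R$ is \dpc\ and locally \dpc, because ``density of the arc component is preserved under taking products with an arbitrary factor once one factor has dense arc component.'' That is false: if $R$ has trivial arc component (as any totally disconnected precompact group does), then the arc component of the identity in $D\times R$ is $D\times\{0\}$, which is dense in $D\times R$ only when $R$ is trivial. So your reduction to the reduced case collapses.

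More seriously, the heart of the argument is missing. The paper does not use the divisible/reduced splitting at all; it uses the decomposition $G=N\oplus H$ with $N$ bounded and $H$ $w$-divisible (Fact~\ref{basic:decomposition}, refined in Lemma~\ref{homogeneous:split}). The bounded piece $N$ is arranged to be $\cont$-homogeneous, i.e.\ $N\cong N^{(\cont)}$, and then the crucial algebraic fact $\HMi(N)\cong N^{(\cont)}\cong N$ (Lemma~\ref{Dima}) lets one simply transport the pathwise connected $\HMi$-topology onto $N$ itself. For the $w$-divisible piece $H$, the paper first locates inside $H$ a $\cont$-homogeneous $w$-divisible subgroup $H_0$ of the same cardinality (this is the substantial Lemma~\ref{homogeneous:parts:in:w-divisible:groups}, resting on the strongly unbounded characterization of Section~\ref{Sec:3}), embeds $H_0$ densely into a power $\T^\kappa$, and then applies $\HMi$ to the pair $H_0\subseteq\T^\kappa$: the image $\HMi(H_0)$ is dense and pathwise connected in the divisible group $K=\HMi(\T^\kappa)$, and the monomorphism $H_0\cong\HMi(H_0)\hookrightarrow K$ is extended to all of $H$ by a rank-counting extension lemma (Lemma~\ref{embedding:lemma}). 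Your sketch gestures at $\HMi$ but never identifies the isomorphism $\HMi(G)\cong G^{(\cont)}$, the notion of $\cont$-homogeneity, the dense embedding into $\T^\kappa$, or the extension step via divisibility of $K$; without these, there is no mechanism to make the copy of $G$ inside an $\HMi$-type group \emph{contain} a dense pathwise connected subgroup, which is exactly the obstacle you yourself flag but do not resolve. The ``levelwise Kirku'' and ``transfer along a section'' ideas do not substitute for this machinery.
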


The proof of this theorem is postponed until Section \ref{proofs}. 
It is worth pointing out that not only the setting of Theorem \ref{Main} is 
complimentary to that of Corollary \ref{finite:index:corollary} but also the proof of Theorem \ref{Main} itself makes no recourse to Corollary \ref{finite:index:corollary}.

The equivalence of items (i) and (ii) in our next corollary offers a  complete solution of  Question \ref{abelian:Markov}.

\begin{corollary}
\label{main:corollary:connected}
 For an abelian group $G$, the following conditions are equivalent:
\begin{itemize}
 \item[(i)] $G$ is an $M$-group;
 \item[(ii)] $G$ admits a connected group topology;
 \item[(iii)] $G$ admits a \dpc, locally \dpc\  group topology.
\end{itemize}
\end{corollary}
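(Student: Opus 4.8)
The plan is to derive Corollary~\ref{main:corollary:connected} purely from the results already established in this section, combining the algebraic characterization of $M$-groups with the two topological realization theorems, one for finite exponent and one for infinite exponent. I would organize the proof around a case split on whether $G$ has finite or infinite exponent, and around the trivial implications in the chain \eqref{connected:chain}.

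First I would dispose of the easy implications. The implication (iii)~$\Rightarrow$~(ii) is immediate from \eqref{connected:chain}, since a \dpc\ space is connected and hence a \dpc\ group topology is in particular a connected group topology. The implication (ii)~$\Rightarrow$~(i) is exactly the classical direction recalled in the discussion preceding Problem~\ref{Markov:question}: if $G$ carries a connected (Hausdorff) group topology, then every proper closed subgroup has index at least $\cont$, and since every proper unconditionally closed subgroup is closed in this particular topology, every proper unconditionally closed subgroup has index at least $\cont$; that is, $G$ is an $M$-group. (One should note the degenerate cases separately: the trivial group and, more generally, any group with no proper unconditionally closed subgroup of index $<\cont$ vacuously satisfies the condition, and the indiscrete topology is excluded by our blanket Hausdorff assumption, so connectedness is a genuine requirement; none of this causes trouble.)

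The substantive implication is (i)~$\Rightarrow$~(iii). Here I would split into two cases according to the exponent of $G$. If $G$ has finite exponent, then Corollary~\ref{finite:index:corollary} already gives a pathwise connected, locally pathwise connected group topology on $G$; by the chain \eqref{connected:chain} and its ``locally'' analogue displayed just before Theorem~\ref{Main}, such a topology is in particular \dpc\ and locally \dpc, so (iii) holds. If $G$ has infinite exponent, then (iii) is precisely the conclusion of Theorem~\ref{Main}, applied to the abelian $M$-group $G$. Since every abelian group either has finite exponent or infinite exponent, these two cases are exhaustive, and (iii) follows in all cases. Assembling the three implications (i)~$\Rightarrow$~(iii)~$\Rightarrow$~(ii)~$\Rightarrow$~(i) closes the cycle and proves the equivalence.

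There is, honestly, no real obstacle in this corollary itself: it is a bookkeeping argument that harvests Theorem~\ref{Main} (whose proof is the genuine work of the paper, deferred to Section~\ref{proofs}) together with the already-proven Corollary~\ref{finite:index:corollary} and the elementary topological facts recorded in \eqref{connected:chain}. The only point requiring the slightest care is making sure the finite-exponent and infinite-exponent cases between them cover all abelian $M$-groups and that the ``locally pathwise connected $\to$ locally \dpc'' step is invoked correctly; both are routine given the definitions and implications set up in the text.
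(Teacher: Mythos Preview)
Your proposal is correct and follows exactly the intended route: the paper leaves this corollary without an explicit proof, treating it as an immediate consequence of Corollary~\ref{finite:index:corollary}, Theorem~\ref{Main}, and the implications in \eqref{connected:chain}, which is precisely the cycle (i)~$\Rightarrow$~(iii)~$\Rightarrow$~(ii)~$\Rightarrow$~(i) you spell out.
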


A classical theorem of Eilenberg and Pontryagin says that every connected locally compact group has a dense arc component \cite[Theorem 39.4(d)]{St}, so it is \dpc.
In \cite{GM}, one can find examples of pseudocompact connected abelian groups without non-trivial convergent sequences, which obviously implies that their arc components are trivial (and so such groups are very far from being \dpc\ and locally \dpc). Under CH, one can find even a countably compact group with the same property \cite{Tk}. (A rich supply of such groups can also be found in \cite{DS_Forcing}.) Our next corollary is interesting in light of these results.

\begin{corollary}
If an abelian group admits a connected group topology, then it can be equipped with a group topology which is both \dpc\ and locally \dpc.
\end{corollary}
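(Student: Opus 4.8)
The plan is to derive this corollary as an immediate consequence of the equivalence $(i)\Leftrightarrow(iii)$ in Corollary~\ref{main:corollary:connected}. Suppose an abelian group $G$ admits a connected group topology. By the implication $(ii)\to(i)$ of Corollary~\ref{main:corollary:connected}, $G$ is an $M$-group. Then, by the implication $(i)\to(iii)$ of the same corollary, $G$ admits a group topology which is simultaneously \dpc\ and locally \dpc, which is exactly the desired conclusion. Thus the whole statement is a one-line formal consequence of Corollary~\ref{main:corollary:connected}; there is no independent obstacle to overcome here.

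The only genuine content hidden behind this corollary is the content of Corollary~\ref{main:corollary:connected} itself, so it is worth recording how that is assembled. The implication $(iii)\to(ii)$ is trivial from \eqref{connected:chain}, since a \dpc\ group is in particular connected. The implication $(ii)\to(i)$ is exactly the classical observation recalled at the start of Section~1: a proper closed subgroup of a connected group has index at least $\cont$, hence a group carrying a connected group topology has all its proper unconditionally closed subgroups of index at least $\cont$, i.e.\ it is an $M$-group. The substantive implication is $(i)\to(iii)$: an abelian $M$-group either has finite exponent, in which case Corollary~\ref{finite:index:corollary} supplies a pathwise connected, locally pathwise connected group topology (which is a fortiori \dpc\ and locally \dpc\ by \eqref{connected:chain} and the analogous chain for the local versions), or it has infinite exponent, in which case Theorem~\ref{Main} directly provides a \dpc, locally \dpc\ group topology. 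Splitting into these two cases and invoking the appropriate result finishes $(i)\to(iii)$.

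The main obstacle, therefore, is entirely upstream: it lies in the proof of Theorem~\ref{Main} (postponed to Section~\ref{proofs}), since that is where one must actually construct, for an arbitrary abelian group of infinite exponent satisfying $|mG|\ge\cont$ for all $m\ge 1$, a group topology whose arc component of the identity is dense and which is locally \dpc. Granting Theorem~\ref{Main} and Corollary~\ref{finite:index:corollary}, the present corollary requires nothing further.
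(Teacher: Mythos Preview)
Your proposal is correct and matches the paper's approach: the paper gives no separate proof of this corollary, treating it as an immediate consequence of the implication (ii)$\to$(iii) in Corollary~\ref{main:corollary:connected}, which is exactly what you do. Your additional paragraph unpacking how Corollary~\ref{main:corollary:connected} is assembled (splitting into the finite- and infinite-exponent cases via Corollary~\ref{finite:index:corollary} and Theorem~\ref{Main}) is accurate and supplies the argument the paper leaves implicit.
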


Our last corollary characterizes subgroups $H$ of a given abelian group $G$ 
which can be realized as the connected component of some group topology on $G$.  

\begin{corollary}
For a subgroup $H$ of an abelian group $G$, the following conditions are equivalent:
\begin{itemize}
\item[(i)] $G$  admits a 
group topology $\mathscr{T}$ such that  $H$  coincides with the
connected component of $(G,\mathscr{T})$;
\item[(ii)] $H$  admits a connected 
group topology;
\item[(iii)] $H$  is an  $M$-group.
\end{itemize}
\end{corollary}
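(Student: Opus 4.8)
The plan is to derive this corollary from the preceding results, chiefly Corollary~\ref{main:corollary:connected} (the equivalence of being an $M$-group, admitting a connected group topology, and admitting a \dpc, locally \dpc\ group topology) together with the classical fact that the connected component of the identity in a topological group is a closed subgroup. The equivalence of (ii) and (iii) is immediate: it is exactly the equivalence of items (i) and (ii) in Corollary~\ref{main:corollary:connected}, applied to the group $H$ in place of $G$. Likewise the implication (i)$\Rightarrow$(ii) is easy: if $\mathscr{T}$ is a group topology on $G$ whose connected component is $H$, then $H$, being the connected component of a topological group, is itself a connected topological group in the subspace topology, so $H$ admits a connected group topology. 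Thus the whole content of the corollary is the implication (iii)$\Rightarrow$(i), or equivalently (ii)$\Rightarrow$(i): given that $H$ admits a connected group topology, one must produce a group topology on the \emph{whole} group $G$ in which $H$ is precisely the connected component.

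First I would fix, using (ii)$\Leftrightarrow$(iii) of Corollary~\ref{main:corollary:connected}, a \dpc, locally \dpc\ group topology $\mathscr{T}_H$ on $H$; in particular $(H,\mathscr{T}_H)$ is connected. The idea is to topologize $G$ so that $H$ becomes an open (hence closed) subgroup carrying the topology $\mathscr{T}_H$, while $G/H$ is given the discrete topology. Concretely, I would declare a subset $U\subseteq G$ to be open precisely when $U\cap(g+H)$ is $\mathscr{T}_H$-open (after translating back to $H$) for every coset $g+H$; equivalently, take as a base for a topology $\mathscr{T}$ on $G$ all translates $g+V$ where $g\in G$ and $V\in\mathscr{T}_H$. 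One checks routinely that $\mathscr{T}$ is a group topology on $G$: it is translation-invariant by construction, and continuity of subtraction reduces, via the fact that $H$ is a $\mathscr{T}$-open subgroup, to the continuity of subtraction in $(H,\mathscr{T}_H)$. It is Hausdorff because $\mathscr{T}_H$ is Hausdorff and the cosets of $H$ are separated by the clopen subgroup $H$ and its translates.

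It then remains to verify that the connected component $C$ of $(G,\mathscr{T})$ equals $H$. Since $H$ is $\mathscr{T}$-open and $\mathscr{T}_H$-connected, $H$ is a connected subset of $(G,\mathscr{T})$ containing $0$, so $H\subseteq C$. Conversely, $H$ is also $\mathscr{T}$-closed (being an open subgroup), so the quotient map $q\colon(G,\mathscr{T})\to G/H$ onto the discrete group $G/H$ is continuous; hence $q(C)$ is a connected subset of a discrete space containing $0+H$, forcing $q(C)=\{0+H\}$, i.e. $C\subseteq H$. Therefore $C=H$, which establishes (ii)$\Rightarrow$(i) and completes the cycle of implications. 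I do not expect any genuine obstacle here: every step is either a direct citation of Corollary~\ref{main:corollary:connected} or a standard verification that a subgroup declared open yields a well-defined group topology in which that subgroup is the connected component; the only mild care needed is in checking the group-topology axioms for $\mathscr{T}$, which is entirely routine once one observes that $H$ is clopen.
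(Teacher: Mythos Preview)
Your proof is correct and follows essentially the same approach as the paper: the nontrivial implication is handled by taking a connected group topology on $H$ (supplied by Corollary~\ref{main:corollary:connected}) and declaring $H$ to be an open subgroup of $G$, so that $H$ is clopen and connected and hence coincides with the connected component. The only cosmetic difference is that you invoke the \dpc, locally \dpc\ topology from item~(iii) of Corollary~\ref{main:corollary:connected}, whereas any connected group topology on $H$ (item~(ii)) already suffices; your additional verifications that $\mathscr{T}$ is a group topology and that $C=H$ are details the paper leaves implicit.
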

\begin{proof}
The implications (i)~$\to$~(ii) and (ii)~$\to$~(iii) are clear.
To check the implication (iii)~$\to$~(i), assume that $H$ is an $M$-group.
Applying the implication (i)~$\to$~(ii) of Corollary 
\ref{main:corollary:connected}, we can find a connected group topology 
$\mathscr{T}_c$ on $H$. 
Let $\mathscr{T}$ be the group topology on $G$ obtained by declaring $(H,\mathscr{T}_c)$ to be an open subgroup of $(G,\mathscr{T})$.
Then $H$ is a clopen connected subgroup of $(G,\mathscr{T})$, which implies that $H$ coincides with the connected component of $(G,\mathscr{T})$.
\end{proof}

The paper is organized as follows.  In Section \ref{Sec:2} we recall
the necessary background on $w$-divisible groups, including a
criterion for dense embeddings into powers of $\T$.  In Section
\ref{Sec:3} we show that uncountable $w$-divisible groups can be
characterized by a property involving large direct sums of unbounded
groups; see Theorem
\ref{characterization:of:strongly:unbounded:groups}.  In Section
\ref{Sec:4} we use this theorem to show that every $w$-divisible group
of size at least $\cont$ contains a $\cont$-homogeneous $w$-divisible
subgroup of the same size; see Lemma
\ref{homogeneous:parts:in:w-divisible:groups}.  (A group $G$ is
$\cont$-homogeneous if $G\cong G^{(\cont)}$.)  The importance of the
notion of $\cont$-homogeneity becomes evident in Section \ref{Sec:5},
where we recall the classical construction, due to Hartman and
Mycielski \cite{HM}, assigning to every group a pathwise connected and
locally pathwise connected topological group. We establish
some additional useful properties of this construction and use them to prove that every $\cont$-homogeneous group admits a pathwise
connected, locally pathwise connected group topology; see Corollary
\ref{c-homogeneous:groups:have:pathwise:connected:group:topologies}.
In Section \ref{proofs}, we combine Lemma
\ref{homogeneous:parts:in:w-divisible:groups}, Corollary
\ref{c-homogeneous:groups:have:pathwise:connected:group:topologies}
and dense embeddings of $w$-divisible groups into powers of $\T$ to
obtain the proof of Theorem \ref{Main}.  Finally, in Section
\ref{Sec:7} we introduce a new cardinal invariant useful in obtaining
a characterization of countable groups that contain infinite direct
sums of unbounded groups, covering the case left open in Theorem
\ref{characterization:of:strongly:unbounded:groups}.

\section{Background on $w$-divisible groups}
\label{Sec:2}

We recall here two fundamental notions from \cite{DGB}.

\begin{definition}
\label{w-divisible:reformulation}
\cite{DGB}
An abelian group $G$ is called {\em $w$-divisible\/} if $|mG|=|G|$ for all integers $m\ge 1$.
\end{definition}

\begin{definition}
\label{def:DGB}
\cite{DGB}
For an abelian group $G$, the cardinal
\begin{equation}
\label{w_d(G)}
w_d(G)=\min\{|nG|:n\in\N\setminus\{0\}\}
\end{equation}
is called the {\em divisible weight\/} of $G$.\footnote{This is a ``discrete case'' of a more general definition given in \cite{DGB} involving the weight of a topological group, which explains the appearance of the word ``weight'' in the term.}
\end{definition}

Obviously, $w_d(G)\le|G|$, and $G$ is $w$-divisible precisely when $w_d(G)=|G|$.  Furthermore, we mention here the following easy fact for future reference:

\begin{fact} \cite[p. 255]{DGB}
If $m$ is a positive integer such that $w_d(G)=|mG|$, then $mG$ is a $w$-divisible subgroup of $G$.
\end{fact}

Using the cardinal $w_d(G)$, the last part of Proposition \ref{reformulation:of:Markov:condition} can be re-stated as follows:
\begin{fact}
\label{w_d(G):restatement:of:M}
An abelian group $G$ of infinite exponent  is an $M$-group if and only if $w_d(G)\ge\cont$.
\end{fact}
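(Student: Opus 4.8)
The plan is simply to translate between the two formulations. First I would invoke the last sentence of Proposition~\ref{reformulation:of:Markov:condition}: since $G$ has infinite exponent, $G$ is an $M$-group if and only if $|mG|\ge\cont$ for every integer $m\ge 1$. Next I would recall that $w_d(G)=\min\{|nG|:n\in\N\setminus\{0\}\}$ is a well-defined cardinal, because every non-empty set of cardinals has a least element, so the minimum is actually attained. Consequently, the assertion ``$|mG|\ge\cont$ for all $m\ge 1$'' is literally the same as the assertion that this minimum is at least $\cont$, that is, $w_d(G)\ge\cont$.

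Spelling out the two directions: for the ``only if'' part, if $G$ is an $M$-group then $|nG|\ge\cont$ for every $n\ge 1$, hence the minimum $w_d(G)$ of these cardinals is $\ge\cont$. For the ``if'' part, if $w_d(G)\ge\cont$ then $|mG|\ge w_d(G)\ge\cont$ for every $m\ge 1$, so $G$ is an $M$-group by Proposition~\ref{reformulation:of:Markov:condition}. I do not anticipate any genuine obstacle here: the statement is purely a restatement of (the infinite-exponent half of) Proposition~\ref{reformulation:of:Markov:condition} in the language of the divisible weight, and the only point worth mentioning is that the minimum defining $w_d(G)$ is attained, so comparing $w_d(G)$ with $\cont$ is equivalent to comparing every individual $|nG|$ with $\cont$.
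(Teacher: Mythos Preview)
Your proposal is correct and matches the paper's own treatment: the paper does not give a separate proof but simply presents this fact as an immediate restatement of the infinite-exponent clause of Proposition~\ref{reformulation:of:Markov:condition} in terms of $w_d(G)$. Your spelled-out argument is exactly the unpacking of that restatement.
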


\begin{fact}
\label{w_d:and:direct:sums}
\cite[Claim 3.6]{DGB}
Let $n$ be a positive integer, let $G_1,G_2,\dots,G_n$ be abelian groups, and let $G=\bigoplus_{i=1}^n G_i$. 
\begin{itemize}
\item[(i)] $w_d(G)=\max\{w_d(G_i):i=1,2,\dots,n\}$.
\item[(ii)] $G$ is $w$-divisible if and only if there exists index 
$i=1,2,\dots,n$ such that $|G|=|G_i|$ and $G_i$ is $w$-divisible. 
\end{itemize}
\end{fact}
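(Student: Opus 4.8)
The plan is to reduce everything to the single identity $mG=\bigoplus_{i=1}^n mG_i$, valid for every positive integer $m$, combined with a dichotomy describing how $w_d$ behaves on a single group.

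First I would record the dichotomy. If an abelian group $H$ is bounded, then $mH=\{0\}$ for some $m\ge 1$, so $w_d(H)=1$. If $H$ is unbounded, then $nH\neq\{0\}$ for all $n\ge 1$; moreover each $nH$ is infinite, since a finite non-zero group has some exponent $k$ and then $knH=\{0\}$ would make $H$ bounded. Hence $w_d(H)=1$ exactly when $H$ is bounded, and $w_d(H)$ is infinite otherwise; in particular $w_d(H)$ is finite if and only if it equals $1$.

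Next I would prove (i). For the inequality $w_d(G)\ge\max_i w_d(G_i)$: for each $i$ and each $m\ge 1$ the group $mG_i$ is a direct summand of $mG$, so $|mG|\ge|mG_i|\ge w_d(G_i)$; taking the minimum over $m$ yields $w_d(G)\ge w_d(G_i)$ for every $i$. For the reverse inequality, set $\kappa=\max_i w_d(G_i)$. If $\kappa=1$, then every $G_i$ is bounded, hence $G$ is bounded and $w_d(G)=1$. If $\kappa$ is infinite, choose $m_i\ge 1$ with $|m_iG_i|=w_d(G_i)$ for each $i$ and put $m=\prod_i m_i$; since $m_i\mid m$ we get $mG_i\subseteq m_iG_i$, so $w_d(G_i)\le|mG_i|\le|m_iG_i|=w_d(G_i)$, i.e. $|mG_i|=w_d(G_i)$ for all $i$. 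Because the cardinality of a finite direct sum equals the largest of the cardinalities of its summands once that largest one is infinite, $|mG|=\kappa$, whence $w_d(G)\le\kappa$.

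Finally, part (ii) follows from (i) together with the trivial bounds $|G_i|\le|G|$ (each $G_i$ is a subgroup of $G$) and $w_d(H)\le|H|$. If some $G_j$ is $w$-divisible with $|G_j|=|G|$, then $|G|=|G_j|=w_d(G_j)\le\max_i w_d(G_i)=w_d(G)\le|G|$, so $G$ is $w$-divisible. Conversely, if $G$ is $w$-divisible, choose $j$ with $w_d(G_j)=\max_i w_d(G_i)=w_d(G)=|G|$; then $|G|=w_d(G_j)\le|G_j|\le|G|$ forces $|G_j|=|G|=w_d(G_j)$, i.e. $G_j$ is $w$-divisible of the same size as $G$. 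The only delicate point throughout is the interplay between finite and infinite cardinalities — in particular that a finite direct sum of finite groups can be strictly larger than the maximum of their cardinalities — and the bounded/unbounded dichotomy of the second paragraph is exactly what neutralises it.
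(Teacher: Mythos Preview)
Your argument is correct. The paper does not actually give a proof of this fact; it is quoted verbatim from \cite[Claim 3.6]{DGB} and used as a black box. Your proof is the natural direct argument: it rests on the identity $mG=\bigoplus_{i=1}^n mG_i$, the dichotomy that $w_d(H)$ is either $1$ (bounded case) or infinite, and elementary cardinal arithmetic for finite sums with an infinite summand. The handling of the potential finite/infinite mismatch via the bounded/unbounded split is precisely the point that makes~(i) work, and your derivation of~(ii) from~(i) is clean. There is nothing to compare against in the present paper, and nothing to add.
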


The abundance of $w$-divisible groups is witnessed by the following 

\begin{fact}
\cite[Lemma 4.1]{GM}
\label{basic:decomposition}
Every abelian group $G$ admits a decomposition $G=K\oplus M$, where $K$ is a bounded torsion group and $M$ is a $w$-divisible group.
\end{fact}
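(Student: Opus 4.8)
The plan is to realize $K$ as a finite direct sum of bounded homogeneous direct summands of $G$, chosen by a cardinality bookkeeping on the invariant $w_d$, so that what remains is $w$-divisible.

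I would first dispose of the routine reductions. If $G$ is bounded, it is torsion and $K:=G$, $M:=\{0\}$ works (the trivial group is $w$-divisible). Writing $G=D\oplus R$ with $D$ divisible and $R$ reduced, $D$ is $w$-divisible, and a finite direct sum of $w$-divisible groups is $w$-divisible by Fact~\ref{w_d:and:direct:sums}; so it suffices to decompose $R$. Moreover the torsion subgroup $t(R)$ is always pure in $R$, so any bounded torsion direct summand of $t(R)$ is a bounded pure, hence direct, summand of $R$, and its complement $M$ satisfies $w_d(M)=\max\{|t(M)|,|M/t(M)|\}=|M|$ because $t(M)$ is $w$-divisible and $M/t(M)$ is torsion-free; thus it is even enough to decompose $t(R)$. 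Finally, a reduced $p$-group $A$ with $p^jA$ finite for some $j$ is already bounded (a finite $p$-group is bounded), and unravelling this shows that an unbounded group which is not $w$-divisible must be uncountable. After all this it remains to treat $G=\bigoplus_{p\in\Prm}G_p$, a reduced torsion group with $\kappa:=|G|$ uncountable, $G$ unbounded and not $w$-divisible.

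The core of the argument is a surgery on finitely many primary components. Call $p$ \emph{heavy} if $|G_p|=\kappa$. There are only finitely many heavy primes — otherwise, using that for $q\ne p$ multiplication by a $q$-coprime integer is an automorphism of $G_p$, one gets $|nG|=\kappa$ for every $n$ and $G$ is $w$-divisible — and each heavy $G_p$ is itself not $w$-divisible for the same reason. Fix a basic subgroup $B_p=\bigoplus_{k\ge 1}\Z(p^k)^{(\gamma_{p,k})}$ of a heavy $G_p$; since $B_p$ is pure in $G_p$ with $G_p/B_p$ divisible, $w_d(G_p)<\kappa$ forces $|p^jB_p|<\kappa$ for large $j$, hence $\gamma_{p,k}<\kappa$ for all large $k$, so $\{k:\gamma_{p,k}=\kappa\}$ is finite (and non-empty, as $|B_p|=\kappa$). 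Each homogeneous component $\Z(p^k)^{(\kappa)}$ with $\gamma_{p,k}=\kappa$ is a direct summand of $B_p$, hence of $G_p$, hence of $G$. Peeling off all of these — finitely many, over finitely many primes — gives $G=K_1\oplus G_1$ with $K_1$ a finite direct sum of bounded groups, and $|G_1|<\kappa$: the remaining multiplicities in the heavy components are all $<\kappa$, the non-heavy components $G_q$ satisfy $|G_q|<\kappa$ with $\sup_q|G_q|<\kappa$ (else some $|nG|$ would already equal $\kappa$), and a supremum of $<\kappa$ over essentially finitely many sizes is $<\kappa$. Now iterate on $G_1$; since cardinals are well-ordered, after finitely many steps one reaches a group that is $w$-divisible or bounded. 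Putting all the bounded pieces removed along the way into $K$ and letting $M$ be the final leftover yields $G=K\oplus M$ with $K$ bounded torsion and $M$ $w$-divisible.

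The step I expect to be the main obstacle is exactly this localization: proving that in an uncountable reduced torsion group which is not $w$-divisible, the failure of $w$-divisibility is concentrated in finitely many homogeneous components of full cardinality $\kappa$, so that removing them strictly decreases the cardinality. Granting that, the remaining work — boundedness of $K$, $w$-divisibility of $M$, and the bookkeeping of the iteration — is routine cardinal arithmetic together with standard facts on basic subgroups, purity, and Fact~\ref{w_d:and:direct:sums}.
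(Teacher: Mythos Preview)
The paper does not supply a proof of this statement; it is quoted as a Fact with a reference to \cite[Lemma~4.1]{GM} (and an alternative proof noted in \cite{DGB1}), so there is no in-paper argument to compare your proposal against.

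That said, your outline is sound: reduce to the reduced torsion case via the divisible/reduced splitting and purity of $t(R)$; then, for an uncountable reduced torsion group that is not $w$-divisible, show that only finitely many primary components have full size $\kappa$, extract from each of those the finitely many homogeneous direct summands of size $\kappa$ coming from a basic subgroup, and iterate along a strictly decreasing chain of cardinals. Two steps would benefit from being made explicit. First, for a heavy prime $p$ you need $|G_p/B_p|<\kappa$ to conclude that the complement of the removed block in $G_p$ has size $<\kappa$; this holds because $p^jG_p$ surjects onto the divisible quotient $G_p/B_p$ and $|p^jG_p|<\kappa$ for some $j$, but you do not say so. Second, your phrase ``a supremum of $<\kappa$ over essentially finitely many sizes'' is misleading: the non-heavy primes form a countably infinite set, and the correct reason that $\big|\bigoplus_{q\ \text{non-heavy}}G_q\big|<\kappa$ is that $\sup_q|G_q|<\kappa$ together with $\kappa>\omega$ bounds a countable sum of cardinals below $\kappa$ by something still below $\kappa$. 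With these clarifications the descent terminates and the argument goes through.
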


An alternative proof of this fact can be found in \cite{DGB1}; a particular case is contained already in \cite{DGB}. 

The next fact describes dense subgroups of $\T^\kappa$ in terms of the invariant $w_d(G)$. (Recall that, for every cardinal $\kappa$,
$\log\kappa=\min\{\sigma:\kappa\le 2^\sigma\}$.)

\begin{fact}
\label{dense:subgroups:in:tori}
For every cardinal $\kappa\ge\cont$,
an abelian group $G$ is isomorphic to a dense subgroup of $\T^{\kappa}$ if and only if $\log\kappa \leq w_d(G) \leq |G|\le 2^{\kappa}$.
\end{fact}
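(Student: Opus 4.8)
The plan is to establish the two implications separately, necessity being routine and sufficiency carrying the weight.

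For necessity, assume $G$ is isomorphic to a dense subgroup of $\T^\kappa$. Then $|G|\le|\T^\kappa|=\cont^{\kappa}=2^\kappa$ since $\kappa\ge\cont$, and $w_d(G)\le|G|$ trivially. For the inequality $\log\kappa\le w_d(G)$, note that for each integer $n\ge1$ the endomorphism $x\mapsto nx$ of $\T^\kappa$ is continuous and surjective (because $n\T=\T$), so it maps the dense subgroup $G$ onto the dense subgroup $nG$. Moreover every dense subgroup $D$ of $\T^\kappa$ has $|D|\ge\log\kappa$: fixing a proper open arc $U\subseteq\T$ and the coordinate projections $\pi_\alpha\colon\T^\kappa\to\T$, density lets one prescribe any pair of coordinate values, so the sets $D\cap\pi_\alpha^{-1}(U)$ $(\alpha<\kappa)$ are pairwise distinct, forcing $\kappa\le2^{|D|}$. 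Hence $w_d(G)=\min_{n\ge1}|nG|\ge\log\kappa$.

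For sufficiency I would first recast the goal dually. Exhibiting $G$ as a dense subgroup of $\T^\kappa$ is the same as producing a family $\{\chi_\alpha:\alpha<\kappa\}$ of homomorphisms $G\to\T$ that separates the points of $G$ and is $\Z$-linearly independent in $\mathrm{Hom}(G,\T)$: the map $g\mapsto(\chi_\alpha(g))_{\alpha<\kappa}$ is then injective and its image meets no nontrivial annihilator $\{x\in\T^\kappa:\sum_{\alpha\in F}n_\alpha x_\alpha=0\}$, hence is dense, since the closed subgroups of $\T^\kappa$ are exactly these annihilators of subgroups of $\widehat{\T^\kappa}=\Z^{(\kappa)}$. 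Writing $C$ for the compact group $\widehat{G}$ ($G$ taken discrete, so $\widehat{C}=G$), the task is thus to find a free subgroup of $C$ of rank $\kappa$ that is dense in $C$. Here $w(C)=|G|$ and $d(C)=\log|G|$, and the hypotheses become $d(C)=\log|G|\le\kappa$ (from $|G|\le2^\kappa$) and $\kappa\le2^{w_d(G)}$ (from $\log\kappa\le w_d(G)$), and $w_d(G)\ge\log\kappa\ge\omega$ forces $G$ to have infinite exponent. The second step checks that $C$ has enough torsion-free rank, namely $r_0(C)\ge2^{w_d(G)}\ge\kappa$. Decompose $G=K\oplus M$ with $K$ bounded and $M$ $w$-divisible of cardinality $\lambda:=w_d(G)$ (Facts \ref{basic:decomposition} and \ref{w_d:and:direct:sums}); a $w$-divisible group of uncountable size contains a direct sum $\bigoplus_{i<\lambda}U_i$ of $\lambda$ unbounded subgroups (the characterization of Section \ref{Sec:3}, with Section \ref{Sec:7} for the countable case), so dualizing the inclusion $\bigoplus_{i<\lambda}U_i\hookrightarrow G$ gives a continuous surjection $C\twoheadrightarrow\prod_{i<\lambda}\widehat{U_i}$; each unbounded $U_i$ has a character of infinite order — otherwise $\widehat{U_i}$ would be the countable union of the closed nowhere-dense subgroups $\{\chi:n\chi=0\}$ (nowhere dense because each has infinite index $|\widehat{nU_i}|$, $U_i$ being unbounded), contradicting Baire's theorem — so $\widehat{U_i}\supseteq\Z$ and $C\twoheadrightarrow\Z^{\lambda}$, whence $r_0(C)\ge r_0(\Z^{\lambda})=2^{\lambda}$. (When $\lambda=\omega$, forced by $\kappa=\cont$, one instead computes $\widehat{M}$ directly; for a countable $w$-divisible $M$ it is a solenoid-type compact group of torsion-free rank $\cont$.)

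It remains to construct a dense free subgroup of $C$ of rank exactly $\kappa$, and this transfinite construction is the main obstacle. It rests on the observation that, in the compact group $C$, the algebraic closure $\overline{H}=\{x\in C:nx\in H\text{ for some }n\ge1\}$ of a subgroup $H$ with $r_0(H)<\kappa$ again has torsion-free rank $<\kappa$, hence contains no nonempty open set — else it would be an open, therefore finite-index, subgroup, forcing $r_0(C)<\kappa$. When $\kappa\ge|G|=w(C)$ this yields the construction directly: enumerate a base $\{B_\gamma:\gamma<w(C)\}$ of $C$, pick recursively $\chi_\gamma\in B_\gamma$ outside the algebraic closure of $\langle\chi_\delta:\delta<\gamma\rangle$, then append independent elements up to index $\kappa$; the resulting $\Z$-independent family meets every $B_\gamma$ and so generates a dense free subgroup of rank $\kappa$. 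The hard case is $\kappa<|G|$, which does occur (e.g. $G$ $w$-divisible with $|G|$ a successor cardinal $\ge\cont^{+}$ and $\kappa$ strictly between $\log|G|$ and $|G|$): now there are fewer generators than basic open sets, density must be forced by generators that lie in many $B_\gamma$ at once, and the "meet every $B_\gamma$" tasks have to be interleaved carefully with the independence requirement — the inequality $\kappa\ge d(C)$ guaranteeing that $\kappa$ well-chosen generators can be dense, and $\kappa\le r_0(C)$ guaranteeing there is always room to step outside the algebraic closure of what has been built. Once $F=\langle\chi_\alpha:\alpha<\kappa\rangle$ is produced, dualizing back gives the desired dense embedding $g\mapsto(\chi_\alpha(g))_{\alpha<\kappa}$ of $G$ into $\T^\kappa$.
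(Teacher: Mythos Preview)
The paper does not actually prove this fact; it simply invokes \cite[Corollary~3.3]{DS_Advances} for $\kappa=\cont$ and \cite[Theorem~2.6]{DS_HMP} for $\kappa>\cont$. Your duality reformulation---reduce to building a dense free subgroup of rank $\kappa$ inside the compact dual $C=\widehat{G}$---is a legitimate and genuinely different route. The necessity argument and the translation of the hypotheses into $d(C)\le\kappa$ and $r_0(C)\ge\kappa$ are correct, and the estimate $r_0(C)\ge 2^{w_d(G)}$ via a surjection onto a product of duals of unbounded summands is sound (though it leans on Sections~\ref{Sec:3} and~\ref{Sec:7}, which appear later in the paper; the dependence is not circular, but it would be out of place as a replacement proof here, and the countable case $w_d(G)=\omega$ is only asserted, not argued).

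The real gap is your ``hard case'' $\kappa<|G|=w(C)$. You flag it as the main obstacle and record the governing inequalities, but ``interleaved carefully'' is not a construction. By duality, density of $F=\langle\chi_\alpha:\alpha<\kappa\rangle$ in $C$ is equivalent to the $\chi_\alpha$ separating the points of $G$; since $|G|$ may be as large as $2^\kappa$, you must arrange for $\kappa$ characters to meet up to $2^\kappa$ separation constraints while remaining $\Z$-independent. The naive recursion over a base of $C$ runs for $w(C)>\kappa$ steps and so produces too many generators, and there is no evident way to thin the result back down to rank $\kappa$ while preserving density. Bridging this is exactly what the cited external references are doing---the title of \cite{DS_HMP} already signals Hewitt--Marczewski--Pondiczery machinery---and your sketch supplies no argument here. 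Without it the sufficiency direction is incomplete.
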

\begin{proof}
For $\kappa=\cont$, this is \cite[Corollary 3.3]{DS_Advances}, and for $\kappa>\cont$ this follows from the equivalence of items (i) and (ii) in \cite[Theorem 2.6]{DS_HMP} and \eqref{w_d(G)}. 
\end{proof}

\begin{corollary}
\label{main:corollary}
Let $\tau$ be an infinite  cardinal. For every abelian group $G$ satisfying $\tau\le w_d(G)\le|G|\le2^{2^\tau}$, there exists a monomorphism $\pi:G\to \T^{2^\tau}$ such that $\pi(G)$ is dense in  $\T^{2^\tau}$.
\end{corollary}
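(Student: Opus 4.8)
The plan is to deduce this immediately from Fact \ref{dense:subgroups:in:tori} by specializing its parameter to $\kappa=2^\tau$. The first thing to check is that this $\kappa$ is admissible, i.e. $\kappa\ge\cont$: since $\tau$ is infinite we have $2^\tau\ge 2^\omega=\cont$, so Fact \ref{dense:subgroups:in:tori} is applicable with $\kappa=2^\tau$.

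Next I would verify the chain of inequalities $\log\kappa\le w_d(G)\le|G|\le 2^{\kappa}$ required by Fact \ref{dense:subgroups:in:tori}. The middle inequality $w_d(G)\le|G|$ is part of the hypothesis (and in any case always holds, as noted right after Definition \ref{def:DGB}). The rightmost inequality reads $|G|\le 2^{2^\tau}$, which is exactly one of the assumed bounds. For the leftmost inequality, recall that $\log\kappa=\min\{\sigma:\kappa\le 2^\sigma\}$; taking $\sigma=\tau$ gives $2^\tau\le 2^\tau$, hence $\log(2^\tau)\le\tau$. Combined with the hypothesis $\tau\le w_d(G)$, this yields $\log\kappa=\log(2^\tau)\le\tau\le w_d(G)$.

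With all three inequalities established, Fact \ref{dense:subgroups:in:tori} provides an isomorphism of $G$ onto a dense subgroup of $\T^{\kappa}=\T^{2^\tau}$; composing this isomorphism with the inclusion of that subgroup into $\T^{2^\tau}$ produces the desired injective homomorphism $\pi:G\to\T^{2^\tau}$ with $\pi(G)$ dense. I do not anticipate any real obstacle here: the substantive work (the structural characterization of dense subgroups of powers of $\T$ via the divisible weight) is already packaged in Fact \ref{dense:subgroups:in:tori}, and the only thing to be careful about is the elementary cardinal-arithmetic estimate $\log(2^\tau)\le\tau$, which is what makes the passage from a general $\kappa$ to the specific value $2^\tau$ go through.
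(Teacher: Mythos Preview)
Your proposal is correct and follows exactly the same route as the paper: set $\kappa=2^\tau$, verify $\kappa\ge\cont$ and the chain $\log\kappa\le\tau\le w_d(G)\le|G|\le 2^\kappa$, and invoke Fact~\ref{dense:subgroups:in:tori}. The only difference is that you spell out the elementary justifications (e.g.\ $\log(2^\tau)\le\tau$) in slightly more detail than the paper does.
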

\begin{proof}
Let $\kappa=2^\tau$. Then $\kappa\ge\cont$ and $\log\kappa\le\tau\le w_d(G)\le|G|\le 2^{2^\tau}=2^\kappa$, so the conclusion follows from Fact \ref{dense:subgroups:in:tori}.
\end{proof}

\section{Finding large direct sums in uncountable $w$-divisible groups}
\label{Sec:3}

For every $p\in\Prm$, we use $r_p(G)$ to denote the $p$-rank of an abelian group $G$ \cite[Section 16]{Fu}, while $r_0(G)$ denotes the free rank of $G$ \cite[Section 14]{Fu}.

\begin{Def}
\label{Def:snb} 
Call an abelian group $G$ {\em \snb} if $G$ contains a direct sum $\bigoplus_{i\in I}A_i$ such that $|I|=|G|$ and all groups $A_i$ are unbounded.
\end{Def}

\begin{remark}
\label{snb}
(i) If $H$ is a subgroup of an abelian group $G$ such that $|H|=|G|$ and $H$ is \snb, then $G$ is \snb\ as well.

(ii) An abelian group $G$ satisfying $r_0(G)=|G|$ is \snb.
\end{remark}

\begin{lemma}\label{examples:snb}
A \snb\ abelian group is $w$-divisible.
\end{lemma}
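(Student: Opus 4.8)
The plan is to show that a \snb\ abelian group $G$ satisfies $|mG|=|G|$ for every integer $m\ge 1$, which is exactly $w$-divisibility by Definition \ref{w-divisible:reformulation}. By Definition \ref{Def:snb}, fix a direct sum $\bigoplus_{i\in I}A_i\le G$ with $|I|=|G|$ and each $A_i$ unbounded. Fix $m\ge 1$. Since $mG\supseteq m\bigl(\bigoplus_{i\in I}A_i\bigr)=\bigoplus_{i\in I}mA_i$, it suffices to show $\bigl|\bigoplus_{i\in I}mA_i\bigr|=|G|$. Because each $A_i$ is unbounded, $mA_i\neq\{0\}$, so $|mA_i|\ge 2$ for every $i\in I$. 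Hence $\bigl|\bigoplus_{i\in I}mA_i\bigr|\ge 2^{|I|}$ if $|I|$ is finite (trivial case, $G$ then finite and there is nothing to prove since $G=\{0\}$ forces unboundedness to fail — so in fact $|I|$ must be infinite once any $A_i$ is nontrivial), and for infinite $|I|$ we get $\bigl|\bigoplus_{i\in I}mA_i\bigr|\ge|I|=|G|$. Combined with $|mG|\le|G|$, this gives $|mG|=|G|$.

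First I would dispose of the degenerate possibility: if $G$ is finite (equivalently $|I|$ finite and all $A_i$ finite), then an unbounded group cannot be finite, so there are no summands $A_i$ unless $|I|=0$, in which case $|G|=0$ is impossible for a group; more carefully, the only way the hypothesis is vacuously consistent is $G=\{0\}$, which is $w$-divisible. Thus I may assume $|I|$ is infinite, so $|G|=|I|\ge\omega$. Then the counting argument above is clean: $\bigl|\bigoplus_{i\in I}mA_i\bigr|\ge|\{f\in\prod_{i\in I}mA_i: f \text{ has finite support}\}|\ge|I|$ simply because the maps supported on a single coordinate taking a fixed nonzero value of $mA_i$ already number $|I|$. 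Together with the obvious reverse inequality $|mG|\le|G|$ and the containment $\bigoplus_{i\in I}mA_i\subseteq mG\subseteq G$, we conclude $|mG|=|G|$ for all $m\ge 1$, i.e. $G$ is $w$-divisible.

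The only point requiring any care — and it is minor — is making sure each $mA_i$ is genuinely nontrivial. This is immediate from the definition of unbounded: $A_i$ unbounded means $mA_i\neq\{0\}$ for every $m\ge 1$, so there is some $a_i\in A_i$ with $ma_i\neq 0$, and the element of $\bigoplus_{i\in I}A_i$ that equals $a_i$ in coordinate $i$ and $0$ elsewhere maps under multiplication by $m$ to a nonzero element of $mG$ supported at $i$. Varying $i$ over $I$ produces $|I|=|G|$ distinct elements of $mG$. There is no real obstacle here; the statement is essentially a direct unwinding of the definitions, and the role of Lemma \ref{examples:snb} in the paper is to supply a convenient sufficient condition (together with Remark \ref{snb}) for $w$-divisibility that will be verified in concrete situations later.
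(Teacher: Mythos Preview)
Your argument is correct and follows essentially the same route as the paper: pick the direct sum $\bigoplus_{i\in I}A_i$ from the definition, observe that each $mA_i$ is nontrivial because $A_i$ is unbounded, and conclude $|mG|\ge\bigl|\bigoplus_{i\in I}mA_i\bigr|\ge|I|=|G|$. The paper's version is terser---it omits the discussion of the degenerate finite case (which cannot occur anyway, since a strongly unbounded group contains an unbounded, hence infinite, subgroup)---but the substance is identical.
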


\begin{proof} Let $G$ be a \snb\ abelian group. Then $G$ contains a direct sum $\bigoplus_{i\in I}A_i$ as in Definition \ref{Def:snb}. 

Let $n$ be a positive integer. Clearly, each $nA_i$ is  non-trivial, and $nA=\bigoplus_{i\in I}nA_i$, so $|nG|\ge |nA|\ge |I|=|G|$.
The converse inequality $|nG|\le |G|$ is trivial. Therefore, $G$ is $w$-divisible by Definition \ref{w-divisible:reformulation}.
\end{proof}

In the rest of this section we shall prove the converse of Lemma \ref{examples:snb} for uncountable groups.

\begin{lemma}\label{snb:local:case} Let $p$ be a prime number and $\tau$ be an infinite cardinal. 
For every abelian $p$-group $G$ of cardinality $\tau$, the following conditions are equivalent: 
\begin{itemize}
   \item[(i)] $G$ contains a subgroup algebraically isomorphic to $L_p^{(\tau)}$, where $L_p=\bigoplus_{n\in\N} \Z(p^n)$;
   \item[(ii)] $G$ is \snb;
   \item[(iii)] $r_p(p^nG)\ge\tau$ for every $n\in \N$;
   \item[(iv)] $r_p(S_n)\geq \tau$ for each $n\in \N$, where $S_n=p^nG[p^{n+1}]$.
 \end{itemize}
\end{lemma}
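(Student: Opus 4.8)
The plan is to prove the cycle (i)$\to$(ii)$\to$(iii)$\to$(i) together with the (essentially trivial) equivalence (iii)$\leftrightarrow$(iv).

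I would dispose of the easy parts first. For (i)$\to$(ii): the group $L_p=\bigoplus_{n\in\N}\Z(p^n)$ is unbounded, so a subgroup $H$ of $G$ with $H\cong L_p^{(\tau)}=\bigoplus_{\xi<\tau}L_p$ is a direct sum of $\tau$ many unbounded groups with $|H|=\omega\cdot\tau=\tau=|L_p^{(\tau)}|$; hence $H$ is \snb\ by Definition~\ref{Def:snb}, and since $|H|=|G|$, Remark~\ref{snb}(i) yields that $G$ is \snb. For (ii)$\to$(iii): if $G$ contains $\bigoplus_{i\in I}A_i$ as in Definition~\ref{Def:snb}, with $|I|=|G|=\tau$, then each $A_i$ is an unbounded abelian $p$-group, so $p^nA_i\neq\{0\}$ and therefore $r_p(p^nA_i)\geq 1$ for every $n$; since $p^nG\supseteq\bigoplus_{i\in I}p^nA_i$ and the $p$-rank is additive over direct sums and monotone with respect to subgroups, $r_p(p^nG)\geq|I|=\tau$. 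For (iii)$\leftrightarrow$(iv): unwinding the definition, $S_n=p^n(G[p^{n+1}])=(p^nG)[p]$ --- an element $p^ny$ with $p^{n+1}y=0$ is killed by $p$ and lies in $p^nG$, and conversely any $x\in p^nG$ with $px=0$ is of the form $p^ny$ with $p^{n+1}y=0$ --- so, $S_n$ being an elementary abelian $p$-group, $r_p(S_n)=\dim_{\mathbb{F}_p}S_n=\dim_{\mathbb{F}_p}(p^nG)[p]=r_p(p^nG)$; thus (iii) and (iv) assert literally the same thing.

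The real content is (iii)$\to$(i), where I would construct a subgroup isomorphic to $L_p^{(\tau)}=\bigoplus_{n\geq 1}\Z(p^n)^{(\tau)}$ by transfinite recursion. Fix a repetition-free enumeration $\{(n_\alpha,\xi_\alpha):\alpha<\tau\}$ of the set $\{(n,\xi):n\geq 1,\ \xi<\tau\}$ (of cardinality $\tau$). At stage $\alpha$ I will have produced elements $g_\beta\in G$ of order exactly $p^{n_\beta}$ for $\beta<\alpha$, maintaining the invariant that the ``bottom'' elements $p^{n_\beta-1}g_\beta$ ($\beta<\alpha$) are $\mathbb{F}_p$-linearly independent in $G[p]$; let $V_\alpha$ denote their span, so $\dim_{\mathbb{F}_p}V_\alpha=|\alpha|<\tau$. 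The bridge to the hypothesis is the elementary observation that $\{p^{n-1}g:g\in G,\ \mathrm{ord}(g)=p^n\}=S_{n-1}\setminus\{0\}=(p^{n-1}G)[p]\setminus\{0\}$. Since $\dim_{\mathbb{F}_p}S_{n_\alpha-1}=r_p(p^{n_\alpha-1}G)\geq\tau>\dim_{\mathbb{F}_p}V_\alpha$, I may choose $x\in S_{n_\alpha-1}\setminus V_\alpha$ and write $x=p^{n_\alpha-1}g_\alpha$ with $\mathrm{ord}(g_\alpha)=p^{n_\alpha}$, preserving the invariant (limit stages require nothing new, the spans being unions of the earlier ones). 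Setting $H=\sum_{\alpha<\tau}\langle g_\alpha\rangle$, I would then invoke the standard fact that elements $x_i$ of finite orders $p^{k_i}$ in an abelian $p$-group generate the internal direct sum $\bigoplus_i\langle x_i\rangle$ if and only if the elements $p^{k_i-1}x_i$ are $\mathbb{F}_p$-linearly independent (in the nontrivial direction one multiplies a hypothetical relation $\sum a_ix_i=0$ by a suitable power of $p$ to reduce it to a relation among the bottom elements). Hence $H\cong\bigoplus_{\alpha<\tau}\Z(p^{n_\alpha})\cong\bigoplus_{n\geq 1}\Z(p^n)^{(\tau)}=L_p^{(\tau)}$, which gives (i).

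The main obstacle is this last step, and within it the two elementary lemmas flagged above: that $S_{n-1}$ is precisely the set of bottom elements of the cyclic subgroups of $G$ of order $p^n$ (so that hypothesis (iii) is exactly what feeds the recursion), and the direct-sum criterion in terms of independence of bottom elements (so that controlling the socle throughout the recursion really produces a copy of $L_p^{(\tau)}$, rather than merely a subgroup with the correct socle). Both are routine but must be stated with care; the remaining work is cardinal bookkeeping, and I would verify separately the edge case $\tau=\omega$, in which the stages $\alpha$ are finite and $\dim_{\mathbb{F}_p}V_\alpha<\omega$ still holds.
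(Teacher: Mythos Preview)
Your proof is correct, and the easy implications (i)$\to$(ii)$\to$(iii) and (iii)$\leftrightarrow$(iv) match the paper's treatment. For the substantive direction, however, the paper argues (iv)$\to$(i) by a case split on the quotients $S_m/S_{m+1}$: either infinitely many have size $\ge\tau$, in which case one chooses subgroups $V_k\subseteq S_{m_k-1}$ of size $\tau$ with $V_k\cap S_{m_k}=\{0\}$ and lifts each $V_k$ to a block $N_k\cong\Z(p^{m_k})^{(\tau)}$; or from some $m_0$ on all these quotients have size $<\tau$, in which case one fixes an independent family $\{V_k:k\in\N\}$ inside $S_{m_0}$ with each $V_k\cong\Z(p)^{(\tau)}$, observes that the intersections $W_k=V_k\cap S_{m_0+k}$ still have size $\tau$ since only $<\tau$ vectors are lost, and lifts each $W_k$ to a block $N_k\cong\Z(p^{m_0+k+1})^{(\tau)}$. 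In both cases independence of the lifted blocks is deduced from independence of their socles via \cite[Lemma~3.12]{DS_Forcing}. Your single transfinite recursion bypasses this dichotomy entirely: you build $L_p^{(\tau)}$ one cyclic summand at a time, using only that $\dim_{\mathbb{F}_p}S_{n_\alpha-1}\ge\tau$ exceeds the dimension of the partial socle $V_\alpha$ at every stage $\alpha<\tau$. The underlying ``independence of bottom elements implies direct sum'' principle is the same in both arguments (you sketch it inline rather than citing it). Your approach is shorter and more uniform; the paper's block-by-block construction, on the other hand, produces a copy of $L_p^{(\tau)}$ whose socle is more explicitly positioned relative to the filtration $\{S_n\}$.
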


\begin{proof} Since $L_p$ is unbounded and $|G|=\tau$, the implication (i) $\to$ (ii) follows from Definition \ref{Def:snb}.

(ii) $\to$ (iii) Let $n\in\N$ be arbitrary. Since $G$ is \snb, $G$ contains a direct sum $\bigoplus_{i\in I}A_i$  
such that $|I|=|G|=\tau$ and all groups $A_i$ are unbounded.
Thus, the subgroup $p^n G$ of $G$ contains the direct sum $\bigoplus_{i\in I}p^n A_i$ of non-trivial $p$-groups $p^n A_i$. Since $r_p(p^n A_i)\ge 1$ for every $i\in I$, it follows that  $r_p(p^n G)\ge r_p(\bigoplus_{i\in I}p^n A_i)\ge |I|=\tau$.

(iii) $\to$ (iv)
Note that $(p^nG)[p]=S_n$, so $r_p(p^nG)=r_p(S_n)$
for every $n\in\N$.

(iv) $\to$ (i)
Note that each $S_{n+1}$ is a subgroup of the group $S_n$, and therefore, the quotient group $S_n/S_{n+1}$ is well-defined. We 
consider two cases.

\smallskip
{\it Case 1.} {\sl There exists a sequence $0<m_1<m_2<\ldots<m_k<\ldots$ of natural numbers such that $| S_{m_k-1}/S_{m_k}|\geq\tau$ for every $k\in\N$.}  In this  case for every $k\in\N$ one can find a subgroup $V_k$ of $S_{m_k-1}$ of size $\tau$  with $V_k\cap S_{m_k}=\{0\}$. Then the family $\{V_k:k\in\N\}$ is independent; that is, the sum $\sum_{k\in\N} V_k$ is direct.
 Since $V_k$ is a subgroup of $S_{m_k-1}$, one can find a subgroup $N_k$ of $G$ such that $N_k\cong \Z(p^{m_k})^{(\tau)}$ and $N_k[p]=V_k$. Now  the family  $\{N_k:k\in\N\}$ is independent; see \cite[Lemma 3.12]{DS_Forcing}.
The subgroup $N=\bigoplus_{k\in\N} N_k$ of $G$ 
is isomorphic to  $\bigoplus_{k\in\N}\Z(p^{m_k})^{(\tau)}$, so
$N$ contains a copy of  $L_p^{(\tau)}$.

\smallskip
{\it Case 2.} {\sl There exists $m_0\in\N$ such that $|S_m/S_{m+1}|<\tau$ for all $m\geq m_0$.}  Note that in this case $|S_{m_0}/S_{m_0+k}|<\tau$ for every $k\in\N$.  Since $\tau$ is infinite and $r_p(S_{m_0})\ge\tau$ by (iv), we can fix an independent family $\mathscr{V}=\{V_k:k\in\N\}$  in $S_{m_0}$ such that $V_k\cong \Z(p)^{(\tau)}$ for all $k\in\N$. For every $k\in\N$ consider the subgroup $W_k=V_k\cap S_{m_0+k}$ of $V_k$. Since the family $\mathscr{V}$ is independent, so is the family $\mathscr{W}=\{W_k:k\in\N\}$.

Let $k\in\N$. The quotient group $V_k/W_k$ is naturally isomorphic to a subgroup of $S_{m_0}/S_{m_0+k}$,
and thus $|V_k/W_k|\leq |S_{m_0}/S_{m_0+k}|<\tau$. This yields $W_k\cong V_k\cong  \Z(p)^{(\tau)}$.
Since $W_k$ is a subgroup of $S_{m_0+k}$,  there exists a subgroup $N_k$ of $G$ such that  $N_k\cong \Z(p^{m_0+k+1})^{(\tau)}$ and $N_k[p]=W_k$. 

Since the family $\mathscr{W}$ is independent, the family $\mathscr{N}=\{N_k:k\in\N\}$ is independent too; see again \cite[Lemma 3.12]{DS_Forcing}. Therefore, $\mathscr{N}$ generates a subgroup $\bigoplus_{k\in\N}N_k$ of $G$ isomorphic to $\bigoplus_{k\in\N} \Z(p^{m_0+k+1})^{(\tau)}$. It remains only to note that the latter group contains an isomorphic  copy of the group $L_p^{(\tau)}$.
\end{proof}

\begin{lemma}
\label{torsion:w-divisible:are:snb}
An uncountable torsion $w$-divisible group $H$ is \snb.
\end{lemma}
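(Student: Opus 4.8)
The plan is to work with the primary decomposition $H=\bigoplus_{p\in\Prm}H_p$ and split into two cases according to whether the cardinality of $H$ is ``concentrated'' on a single prime. It will be convenient to record first the elementary fact that an uncountable abelian $p$-group $G$ satisfies $r_p(G)=|G|$: since $G=\bigcup_{n}G[p^n]$ and multiplication by $p^n$ embeds $G[p^{n+1}]/G[p^n]$ into $G[p]$, one gets $|G[p^n]|\le|G[p]|^n$, hence $|G|\le\omega\cdot|G[p]|$, and as $|G|$ is uncountable this forces $|G|=|G[p]|=r_p(G)$.

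\emph{Case 1: there is a prime $p$ with $|H_p|=|H|$ and $H_p$ $w$-divisible.} Then $|p^nH_p|=|H_p|$ for all $n$, each $p^nH_p$ is an uncountable $p$-group, so $r_p(p^nH_p)=|p^nH_p|=|H_p|$ by the fact above. Thus $H_p$ satisfies condition (iii) of Lemma \ref{snb:local:case}, so $H_p$ is \snb; since $|H_p|=|H|$, Remark \ref{snb}(i) yields that $H$ is \snb. (Note one really must invoke Lemma \ref{snb:local:case} here: inside a single $p$-group there is no way to build unbounded subgroups from a ``many elements'' count alone.)

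\emph{Case 2: otherwise.} The key step is to prove, by induction on $|P|$, that $\bigoplus_{q\notin P}H_q$ is $w$-divisible of cardinality $|H|$ for every finite $P\subseteq\Prm$. For the inductive step, write $P=P'\cup\{p\}$ with $p\notin P'$ and apply Fact \ref{w_d:and:direct:sums}(ii) to the two–term decomposition $\bigoplus_{q\notin P'}H_q=H_p\oplus\bigoplus_{q\notin P}H_q$, which is $w$-divisible of cardinality $|H|$ by the inductive hypothesis: the alternative that $|H_p|=|H|$ and $H_p$ is $w$-divisible is excluded by the hypothesis of Case 2, so $\bigoplus_{q\notin P}H_q$ is $w$-divisible of cardinality $|H|$, as required. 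Listing $\Prm=\{q_1<q_2<\cdots\}$, this gives $\bigl|\bigoplus_{i\ge j}H_{q_i}\bigr|=|H|$ for every $j$; discarding the primes with countable component (which together contribute at most $\omega<|H|$), we see that $\sup\{|H_{q_i}|:i\ge j,\ |H_{q_i}|>\omega\}=|H|$ for every $j$ (a countable sum of cardinals $<|H|$ is $<|H|$). From this one routinely selects distinct primes $p_1,p_2,\dots$ for which $\nu_k:=|H_{p_k}|=r_{p_k}(H_{p_k})$ (these are uncountable, so equal by the fact above) form a nondecreasing sequence of infinite cardinals with $\sup_k\nu_k=|H|$.

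Finally I would carry out a diagonal construction. Fix a basis $\{e_{k,\xi}:\xi<\nu_k\}$ of the $\mathbb F_{p_k}$–vector space $H_{p_k}[p_k]$, write the index set as a disjoint union $J=\bigsqcup_{m\ge1}J_m$ with $|J_m|=\nu_m$ (so $|J|=\sum_m\nu_m=|H|$), and, using the bijections $\phi_k\colon\bigsqcup_{m\le k}J_m\to\{\xi:\xi<\nu_k\}$ (legitimate because the $\nu_m$ are nondecreasing and infinite, so $\bigl|\bigsqcup_{m\le k}J_m\bigr|=\nu_k$), put $a_{\alpha,k}=e_{k,\phi_k(\alpha)}$ for $\alpha\in J_m$ and $k\ge m$. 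Then $A_\alpha:=\langle a_{\alpha,k}:k\ge m\rangle\cong\bigoplus_{k\ge m}\Z(p_k)$ is unbounded (the $p_k$ are distinct primes), and the family $\{A_\alpha:\alpha\in J\}$ is independent: comparing $p_k$–components of a vanishing finite sum reduces, for each $k$, to the independence of distinct basis vectors $e_{k,\phi_k(\alpha)}$. Hence $\bigoplus_{\alpha\in J}A_\alpha\le H$ with $|J|=|H|$, so $H$ is \snb. The main obstacle is the situation in Case 2 where $\cf(|H|)=\omega$: here no single component has cardinality $|H|$ and one is forced to spread the unbounded summands diagonally across infinitely many primes — it is precisely $w$-divisibility, through Fact \ref{w_d:and:direct:sums}(ii), that guarantees enough primes with component sizes cofinal in $|H|$ to make this possible.
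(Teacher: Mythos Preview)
Your proof is correct and follows the same overall line as the paper's: pass to primary components, invoke Lemma~\ref{snb:local:case} when a single $H_p$ is $w$-divisible of full size, and otherwise distribute the unbounded summands across infinitely many primes. The differences are in execution. The paper's dichotomy is whether $\sup_{p\notin F}\sigma_p<\tau$ for some finite $F$ (where $\sigma_p=r_p(H)$), and it then \emph{derives} your Case~1 from that hypothesis via two applications of Fact~\ref{w_d:and:direct:sums}(ii); your formulation of Case~1 is more direct. In Case~2 the paper avoids your nondecreasing-sequence bookkeeping and the bijections $\phi_k$ altogether: it observes that any subset $Y$ of $S:=\bigcup_p S_p$ (with $S_p\subseteq H_p[p]$ a $p$-independent set of size $\sigma_p$) of cardinality $<\tau$ still leaves $\{p:S_p\setminus Y\neq\emptyset\}$ infinite, and then runs a transfinite recursion of length $\tau$ choosing pairwise disjoint countable subsets $X_\alpha\subseteq S$ each meeting infinitely many $S_p$; the groups $\langle X_\alpha\rangle$ are then automatically unbounded and independent. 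This is a little cleaner, though your explicit diagonal construction has the virtue of producing summands of a concrete shape $\bigoplus_{k\ge m}\Z(p_k)$.

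One small slip: the parenthetical ``a countable sum of cardinals $<|H|$ is $<|H|$'' is false when $\cf(|H|)=\omega$. What you actually need (and what does hold) is that if the supremum over a tail were some infinite $\kappa<|H|$, then the countable sum would be at most $\omega\cdot\kappa=\kappa<|H|$; the conclusion $\sup=|H|$ survives unchanged.
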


\begin{proof}
Recall that $H=\bigoplus_{p\in\Prm} H_p$, where each $H_p$ is a $p$-group \cite[Theorem 8.4]{Fu}. Let $\tau = |H|$ and $\sigma_p=r_p(H)=r_p(H_p)$ for all $p\in\Prm$. Since $H$ is uncountable, $\tau= \sup_{p\in\Prm} \sigma_p$. We consider two cases.

\smallskip
{\em Case 1\/}. {\sl There exists a finite set $F\subseteq \Prm$ such that $\sup_{p\in\Prm\setminus F} \sigma_p<\tau$.\/} 
In this case, $H=G_1\oplus G_2$, where $G_1=\bigoplus_{p\in F} H_p$ and $G_2= \bigoplus_{p\in\Prm\setminus  F} H_p$. 
Note that $|G_2|\le \omega+\sup_{p\in\Prm\setminus  F} \sigma_p<\tau=|H|$, 
as $H$ is uncountable. Since 
$H=G_1\oplus G_2$
is $w$-divisible, from Fact \ref{w_d:and:direct:sums}~(ii) we conclude that $G_1$ is $w$-divisible and $|G_1|=|H|$. 
This implies that the finite set $F$ is non-empty.
Applying
Fact \ref{w_d:and:direct:sums}~(ii) once again, we can find $p\in F$ such that $H_p$ is $w$-divisible and $|H_p|=|G_1|=|H|=\tau$.  

Let $n\in\N$. Since $H_p$ is a $w$-divisible $p$-group, $|p^n H_p|=|H_p|=\tau$. Since $\tau$ is uncountable and $p^n H_p$ is a $p$-group,
$r_p(p^n H_p)=|H_p|$. Applying the implication  (iii)$\to$(ii) of Lemma \ref{snb:local:case},  we obtain that $H_p$ is \snb. Since $|H_p|=\tau=|H|$, the group $H$ is  \snb\ by Remark \ref{snb}~(i).

\smallskip
{\em Case 2\/}. {\sl $\sup_{p\in\Prm\setminus  F} \sigma_p=\tau$ for all finite sets $ F\subseteq \Prm$.\/} For each $p\in\Prm$ choose a 
$p$-independent
subset $S_p$ of $H_p[p]$ with $|S_p|=\sigma_p$. Let $S=\bigcup_{p\in\Prm} S_p$. One can easily see that the assumption of Case 2  implies the following property:
\begin{equation}
\label{eq:4}
\text{If } Y\subseteq S \text{ and } |Y|<\tau,
\text{ then the set }
\{p\in \Prm: S_p\setminus Y\not=\emptyset\}
\text{ is infinite.}
\end{equation}

By transfinite induction, we shall construct a family  $\{X_\alpha:\alpha<\tau\}$ of pairwise disjoint countable subsets of $S$ such that the set $P_\alpha=\{p\in\Prm: X_\alpha\cap S_p\not=\emptyset\}$ is infinite for each $\alpha<\tau$.
\smallskip
{\em Basis of induction\/}.
By our assumption, infinitely many of sets $S_p$ are non-empty, so we can choose a countable set $X_0\subseteq S$ such that $P_0$ is infinite.

\smallskip
{\em Inductive step\/}.
Let $\alpha$ be an ordinal such that $0<\alpha<\tau$, and suppose that we have already defined  a family  $\{X_\beta:\beta<\alpha\}$ of pairwise disjoint countable subsets of $S$ such that the set $P_\beta$ is infinite for each $\beta<\alpha$. Since $\alpha<\tau$ and $\tau$ is uncountable,  the set $Y=\bigcup\{X_\beta:\beta<\alpha\}$ satisfies $|Y|\le \omega+|\alpha|<\tau$. Using \eqref{eq:4}, we can select a countable subset $X_\alpha$ of $S\setminus Y$ which intersects infinitely many $S_p$'s. 

\smallskip
The inductive construction been complete, for every $\alpha<\tau$, let $A_\alpha$ be the subgroup of $H$ generated by $X_\alpha$. Since $P_\alpha$ is infinite, $A_\alpha$ contains elements of arbitrary large order, so $A_\alpha$ is unbounded.  Since the family $\{X_\alpha:\alpha<\tau\}$ is pairwise disjoint, the sum $\sum_{\alpha<\tau} A_\alpha = \bigoplus_{\alpha<\tau} A_\alpha$ is direct. Since $|H|=\tau$, this shows that $H$ is \snb.
\end{proof}

\begin{thm}  
\label{characterization:of:strongly:unbounded:groups}
An uncountable abelian group is \snb\ if and only if it is $w$-divisible.  
\end{thm}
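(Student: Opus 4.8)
The plan is to prove the non-trivial implication, that an uncountable $w$-divisible abelian group $G$ is \snb; the reverse implication is exactly Lemma \ref{examples:snb} and requires no cardinality assumption. Write $\tau=|G|$, so $\tau>\omega$. I would split into two cases according to the size of the free rank $r_0(G)$. If $r_0(G)=\tau$, then $G$ is \snb\ by Remark \ref{snb}~(ii) and we are done, so assume $r_0(G)<\tau$.

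The key observation in this case is that the torsion-free quotient $G/t(G)$ embeds into its divisible hull, a $\Q$-vector space of dimension $r_0(G/t(G))=r_0(G)$, so $|G/t(G)|\le\omega+r_0(G)<\tau$ because $\tau$ is uncountable. Since $t(G)$ then has index $<\tau$ in a group of size $\tau$, it follows that $|t(G)|=\tau$; in particular $t(G)$ is uncountable.

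Next I would verify that $t(G)$ is itself $w$-divisible. For a positive integer $n$ one has $n\,t(G)=t(nG)$: if $g\in G$ has finite order then so does $ng$, and conversely $ng$ of finite order forces $g$ of finite order. Now $nG/t(nG)=nG/(nG\cap t(G))$ embeds into $G/t(G)$, so $|nG/t(nG)|\le|G/t(G)|<\tau$; combined with $|nG|=|G|=\tau$, which holds since $G$ is $w$-divisible, this gives $|n\,t(G)|=|t(nG)|=\tau=|t(G)|$. Hence $t(G)$ is $w$-divisible.

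Thus $t(G)$ is an uncountable torsion $w$-divisible group, so it is \snb\ by Lemma \ref{torsion:w-divisible:are:snb}, and since $|t(G)|=\tau=|G|$, Remark \ref{snb}~(i) yields that $G$ is \snb\ as well. The argument is essentially a bookkeeping reduction to the torsion case, with Lemma \ref{torsion:w-divisible:are:snb} carrying all the real weight, so I do not anticipate a genuine obstacle; the one point to get right is the bound $|G/t(G)|\le\omega+r_0(G)$, which is precisely what forces a large torsion subgroup once the free rank is small. (Alternatively, one could first pass to the $w$-divisible summand $M$ of size $|G|$ supplied by Facts \ref{basic:decomposition} and \ref{w_d:and:direct:sums}, but this detour is unnecessary.)
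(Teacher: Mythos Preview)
Your proof is correct and follows essentially the same route as the paper's own argument: reduce to the torsion subgroup when $r_0(G)<|G|$, show $|G/t(G)|<|G|$, deduce that $t(G)$ is uncountable and $w$-divisible by comparing $|n\,t(G)|$ with $|nG|$ via the embedding $nG/n\,t(G)\hookrightarrow G/t(G)$, and then invoke Lemma~\ref{torsion:w-divisible:are:snb} together with Remark~\ref{snb}(i). The only cosmetic difference is that the paper writes $nH=H\cap nG$ where $H=t(G)$, while you phrase the same identity as $n\,t(G)=t(nG)$; these are equivalent.
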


\begin{proof} The ``only if'' part is proved in Lemma \ref{examples:snb}. To prove the ``if'' part, assume that $G$ is an uncountable $w$-divisible group. 
If $r_0(G)=|G|$, then $G$ is \snb\ by Remark \ref{snb}~(ii). Assume now that $r_0(G)<|G|$. Then $|G|=|H|$ and 
$|G/H|=r_0(G) \cdot \omega < |G|$, where $H = t(G)$ is the torsion subgroup of $G$; see \eqref{eq:torsion:part}.

Let $n$ be a positive integer. Since $nH = H \cap nG$, the quotient group $nG/nH = nG/(H\cap nG)$ is isomorphic to a subgroup of the quotient group $G/H$, so $|nG/nH|\le |G/H|<|G|$. Since $G$ is $w$-divisible, $|G|=|nG|$ by Definition \ref{w-divisible:reformulation}. Now $|nG/nH|<|nG|$ implies $|nH|= |nG|$, as $ |nG| =  |nH| \cdot |nG/nH|$.  Since $|nG|=|G|=|H|$,  this yields $|nH| = |H|$. Since this equation holds for all 
$n\in\N\setminus\{0\}$,
$H$ is $w$-divisible
by Definition \ref{w-divisible:reformulation}.

Since $|H|=|G|$ and $G$ is uncountable by our assumption, 
$H$ is an uncountable torsion group.
Applying Lemma \ref{torsion:w-divisible:are:snb},
we conclude that $H$ is \snb. Since $|H|=|G|$, $G$ is also \snb\ by Remark \ref{snb}~(i).
\end{proof}

Since every unbounded abelian group contains a countable unbounded subgroup, from Theorem \ref{characterization:of:strongly:unbounded:groups} we obtain the following

\begin{corollary}
\label{direct:sums:in:w-divisible:groups}
For every uncountable abelian group $G$, the following conditions are equivalent:
\begin{itemize}
\item[(i)] $G$ is $w$-divisible;
\item[(ii)] $G$ contains a direct sum $\bigoplus_{i\in I} A_i$ of countable unbounded groups $A_i$ such that $|I|=|G|$.
\end{itemize}
\end{corollary}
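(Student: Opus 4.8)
The plan is to read off the corollary from Theorem~\ref{characterization:of:strongly:unbounded:groups}, the only extra ingredient being the elementary observation that every unbounded abelian group contains a countable unbounded subgroup.

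For the implication (ii)~$\to$~(i), suppose $G$ contains a direct sum $\bigoplus_{i\in I}A_i$ of countable unbounded groups with $|I|=|G|$. Since Definition~\ref{Def:snb} only requires the summands to be unbounded (not countable), this already witnesses that $G$ is \snb, and hence $G$ is $w$-divisible by Lemma~\ref{examples:snb}. Note this direction does not even use that $G$ is uncountable.

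For the implication (i)~$\to$~(ii), assume $G$ is $w$-divisible; as $G$ is uncountable, Theorem~\ref{characterization:of:strongly:unbounded:groups} supplies a direct sum $\bigoplus_{i\in I}B_i$ inside $G$ with $|I|=|G|$ and every $B_i$ unbounded. I would then replace each $B_i$ by a \emph{countable} unbounded subgroup $A_i\le B_i$: if $B_i$ has an element of infinite order, that element generates a copy of $\Z$; otherwise $B_i$ is torsion and unbounded, and one builds a strictly increasing chain of finite subgroups of strictly increasing exponents and takes the union. Since each $A_i$ is a subgroup of the corresponding $B_i$ and the family $\{B_i:i\in I\}$ is independent, the family $\{A_i:i\in I\}$ is independent as well, so $\sum_{i\in I}A_i=\bigoplus_{i\in I}A_i$ is a direct sum of countable unbounded groups with $|I|=|G|$, as required.

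I do not expect any genuine obstacle: all the substance sits in Theorem~\ref{characterization:of:strongly:unbounded:groups}, and the remaining work is the routine passage from unbounded summands to countable unbounded summands, which preserves directness because subgroups of the members of an independent family again form an independent family.
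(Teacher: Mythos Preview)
Your proposal is correct and follows essentially the same route as the paper: derive both directions from Theorem~\ref{characterization:of:strongly:unbounded:groups} together with the elementary fact that every unbounded abelian group contains a countable unbounded subgroup. The only difference is that you spell out the (routine) reasons why passing to countable unbounded subgroups preserves directness, whereas the paper leaves this implicit.
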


\begin{remark}\label{Remark:12July}
The assumption that $G$ is uncountable cannot be omitted either in Theorem \ref{characterization:of:strongly:unbounded:groups} or in its Corollary \ref{direct:sums:in:w-divisible:groups}; that is, the converse of Lemma \ref{examples:snb} does not hold for countable groups. Indeed, the group $\Z$ of integer numbers is $w$-divisible, yet it is not \snb. A precise description of 
countable \snb\ groups is given in 
Proposition \ref{Last:proposition} below; see also
Theorem \ref{characterization:snb_groups} that unifies both the countable and uncountable cases.
\end{remark}

\section{$\sigma$-homogeneous groups}
\label{Sec:4}

\begin{definition}
\label{definition:c-homogeneous}
For an infinite cardinal $\sigma$, we say that an abelian group $G$ is {\em $\sigma$-homogeneous\/} if  $G\cong G^{(\sigma)}$.
\end{definition}

The relevance of this definition to the topic of our paper shall become clear from Corollary \ref{c-homogeneous:groups:have:pathwise:connected:group:topologies} below. 

The straightforward proof of the next lemma is omitted.

\begin{lemma}
\label{trivial:group}
\label{single:power:sigma}
\label{combined:power:sigma}
Let $\sigma$ be an infinite cardinal.
\begin{itemize}
\item[(i)] The trivial group is $\sigma$-homogeneous.
\item[(ii)] If 
$\kappa$ is a cardinal with $\kappa\ge\sigma$
and $A$ is an abelian group, then the group $A^{(\kappa)}$ is $\sigma$-homogeneous. 
\item[(iii)] 
If 
$\{G_i:i\in I\}$ is a
non-empty
 family of $\sigma$-homogeneous abelian groups, then
$G=\bigoplus_{i\in I} G_i$ is $\sigma$-homogeneous.
\end{itemize}
\end{lemma}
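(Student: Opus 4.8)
The three items are genuinely routine, so the proof consists of unwinding the definitions together with standard facts about direct sums and cardinal arithmetic. For item (i), the trivial group $\{0\}$ satisfies $\{0\}^{(\sigma)}=\{0\}$, hence $\{0\}\cong\{0\}^{(\sigma)}$. For item (ii), I would use associativity of the direct sum construction together with the cardinal identity $\kappa\cdot\sigma=\kappa$, valid because $\kappa\ge\sigma$ and $\sigma$ is infinite: indeed $\left(A^{(\kappa)}\right)^{(\sigma)}\cong A^{(\kappa\times\sigma)}\cong A^{(\kappa\cdot\sigma)}\cong A^{(\kappa)}$, so $A^{(\kappa)}$ is $\sigma$-homogeneous. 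For item (iii), given a non-empty family $\{G_i:i\in I\}$ of $\sigma$-homogeneous groups with $G=\bigoplus_{i\in I}G_i$, I would again invoke associativity and commutativity of direct sums: $G^{(\sigma)}\cong\bigoplus_{i\in I}G_i^{(\sigma)}\cong\bigoplus_{i\in I}G_i=G$, where the middle isomorphism uses $G_i\cong G_i^{(\sigma)}$ for each $i$ (which is where $\sigma$-homogeneity of the summands enters, and where non-emptiness of $I$ guarantees the statement is about an actual group).

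There is no real obstacle here; the only point that deserves a line of care is the cardinal arithmetic $\kappa\cdot\sigma=\kappa$ in item (ii), which relies on $\sigma$ being infinite and $1\le\sigma\le\kappa$, forcing $\kappa$ to be infinite as well. Since the paper explicitly states that the proof is omitted, I would not expand any of this, but the plan above is exactly what one writes if asked to supply it.
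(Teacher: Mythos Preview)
Your proposal is correct; each item is handled by exactly the standard argument (trivial group, the cardinal identity $\kappa\cdot\sigma=\kappa$ for $\kappa\ge\sigma\ge\omega$, and commutation of direct sums with the $(\cdot)^{(\sigma)}$ functor). The paper omits the proof entirely, so there is nothing to compare against; what you wrote is precisely what one would supply if asked.
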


\begin{lemma}
\label{split:lemma}
Let $\sigma$ be an infinite cardinal. Every abelian bounded torsion group $K$ admits a decomposition $K=L\oplus N$,  where $|L|<\sigma$ and 
$N$ is $\sigma$-homogeneous.
\end{lemma}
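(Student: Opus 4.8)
The plan is to reduce to the structure theorem for bounded abelian groups and then split the cyclic summands according to the size of their multiplicities. First, if $K=\{0\}$, I would simply take $L=N=\{0\}$, which works by Lemma~\ref{trivial:group}~(i). So assume $K\ne\{0\}$ and write $K$ as in \eqref{eq:1}, say $K=\bigoplus_{p\in\pi(K)}\bigoplus_{i=1}^{m_p}\Z(p^i)^{(\alpha_{p,i})}$, where $\pi(K)$ is a non-empty finite set of primes and each $m_p\in\N$. The crucial (and essentially only) point is that the index set $J=\{(p,i):p\in\pi(K),\ 1\le i\le m_p\}$ is \emph{finite}, being a finite union of finite sets.

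Next I would partition $J=J_0\cup J_1$, where $J_0=\{(p,i)\in J:\alpha_{p,i}<\sigma\}$ and $J_1=\{(p,i)\in J:\alpha_{p,i}\ge\sigma\}$, and set $L=\bigoplus_{(p,i)\in J_0}\Z(p^i)^{(\alpha_{p,i})}$ and $N=\bigoplus_{(p,i)\in J_1}\Z(p^i)^{(\alpha_{p,i})}$, so that $K=L\oplus N$. For $N$: each summand $\Z(p^i)^{(\alpha_{p,i})}$ with $(p,i)\in J_1$ is $\sigma$-homogeneous by Lemma~\ref{single:power:sigma}~(ii) applied with $\kappa=\alpha_{p,i}\ge\sigma$; hence $N$ is $\sigma$-homogeneous — by Lemma~\ref{combined:power:sigma}~(iii) when $J_1\ne\emptyset$ (a finite direct sum of $\sigma$-homogeneous groups), and by Lemma~\ref{trivial:group}~(i) when $J_1=\emptyset$.

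For $L$: each summand $\Z(p^i)^{(\alpha_{p,i})}$ with $(p,i)\in J_0$ has cardinality strictly below $\sigma$ — it is a finite group when $\alpha_{p,i}$ is finite, and has cardinality $\alpha_{p,i}<\sigma$ when $\alpha_{p,i}$ is infinite — and since $L$ is a finite direct sum of such groups and $\sigma$ is an infinite cardinal, this gives $|L|<\sigma$. That completes the argument. I do not expect any genuine obstacle here: the whole content is the observation that $J$ is finite (so that ``finitely many summands of multiplicity $<\sigma$'' produce a group of size $<\sigma$), together with the routine handling of the two empty/trivial boundary cases and the elementary cardinal arithmetic $|L|<\sigma$ for a finite sum of cardinals each below an infinite cardinal.
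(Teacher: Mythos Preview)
Your proof is correct and follows essentially the same route as the paper's: both invoke the Pr\"ufer decomposition of $K$ into finitely many summands of the form $\Z(p^i)^{(\alpha)}$, split the index set according to whether $\alpha<\sigma$ or $\alpha\ge\sigma$, and verify the two required properties using Lemma~\ref{trivial:group}~(i)--(iii). The only difference is cosmetic (you index by pairs $(p,i)$ while the paper reindexes by $1,\dots,n$).
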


\begin{proof}
If $K$ is trivial, then 
$L=N=K$ work, as
$N$
is $\sigma$-homogeneous by Lemma \ref{trivial:group}~(i).
Suppose now that $K$ is non-trivial.
Then
$K$ admits 
a
decomposition  $K=\bigoplus_{i=1}^n K_i^{(\alpha_i)}$, where  
$n\in\N\setminus\{0\}$,
$\alpha_1$, $\alpha_2,\dots,\alpha_n$ are cardinals and each $K_i$ is isomorphic to $\Z(p_i^{k_i})$ for some $p_i\in\Prm$ and $k_i\in\N$. 
Let 
\begin{equation}
\label{IJ}
I=\{i\in \{1,2,\dots,n\}: \alpha_i<\sigma\}
\ \ 
\mbox{and}
\ \ 
J=\{1,2,\dots,n\}\setminus I.
\end{equation}

If $I=\emptyset$, we define $L$ to be the trivial group.
Otherwise, we let $L=\bigoplus_{i\in I} K_i^{(\alpha_i)}$.
Since $K_i$ is finite and $\alpha_i<\sigma$ for every $i\in I$, and $\sigma$ is an infinite cardinal, we conclude that $|L|<\sigma$.

If $J=\emptyset$,  we define $N$ to be the trivial group and
note that $N$ is 
$\sigma$-homogeneous by Lemma \ref{trivial:group}~(i).
If $J\not=\emptyset$,
we let $N=\bigoplus_{j\in J} K_j^{(\alpha_j)}$.
Let 
$i\in J$
be arbitrary.
Since $\alpha_i\ge\sigma$,  Lemma \ref{single:power:sigma}~(ii) implies that $K_i^{(\alpha_i)}$ is  $\sigma$-homogeneous.
Now Lemma \ref{combined:power:sigma}~(iii) implies that $N$ is  $\sigma$-homogeneous  as well.

The equality $K=L\oplus N$
follows from \eqref{IJ} and our definition of $L$ and $N$.
\end{proof}

For a cardinal $\sigma$, we use $\sigma^+$ to denote the smallest cardinal bigger than $\sigma$.

\begin{lemma}
\label{homogeneous:split}
Let $\sigma$ be an infinite cardinal. Every abelian group $G$ satisfying $w_d(G)\ge\sigma$ admits a decomposition $G=N\oplus H$ such that 
$N$ is a bounded $\sigma^+$-homogeneous group and $H$ is a $w$-divisible group with $|H|=w_d(G)$.
\end{lemma}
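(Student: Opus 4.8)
The plan is to combine the decomposition of an arbitrary abelian group into a bounded part and a $w$-divisible part (Fact \ref{basic:decomposition}) with the splitting of bounded torsion groups furnished by Lemma \ref{split:lemma}, applied not to $\sigma$ but to $\sigma^+$; the only real content is keeping the cardinalities in line. First I would invoke Fact \ref{basic:decomposition} to write $G=K\oplus M$, where $K$ is a bounded torsion group and $M$ is $w$-divisible. Since $K$ is bounded, $nK=\{0\}$ for some positive integer $n$, so $w_d(K)=1$; since $M$ is $w$-divisible, $w_d(M)=|M|$. Hence Fact \ref{w_d:and:direct:sums}(i) gives $w_d(G)=\max\{w_d(K),w_d(M)\}=|M|$. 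The hypothesis $w_d(G)\ge\sigma\ge\omega$ enters here, and essentially only here: it forces the cardinal $|M|=w_d(G)$ to be infinite, which is what makes the bookkeeping below go through.

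Next I would apply Lemma \ref{split:lemma} with the cardinal $\sigma^+$ in place of $\sigma$ to the bounded torsion group $K$, obtaining a decomposition $K=L\oplus N$ with $|L|<\sigma^+$ — equivalently $|L|\le\sigma$ — and with $N$ a $\sigma^+$-homogeneous group. Note that $N$ is automatically bounded, being a direct summand of the bounded group $K$. Passing from $\sigma$ to $\sigma^+$ in this step is exactly what produces $\sigma^+$-homogeneity of $N$, and it costs nothing, since $|L|\le\sigma\le w_d(G)=|M|$.

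Finally I would set $H=L\oplus M$ and verify the two requirements on it. The cardinality is correct: $|H|=|L|\cdot|M|=|M|=w_d(G)$, because $|L|\le\sigma\le|M|$ and $|M|$ is infinite. And $H$ is $w$-divisible: it is the direct sum of the two groups $L$ and $M$, of which $M$ is $w$-divisible and satisfies $|M|=|H|$, so Fact \ref{w_d:and:direct:sums}(ii) applies. Regrouping the summands, $G=K\oplus M=(L\oplus N)\oplus M=N\oplus(L\oplus M)=N\oplus H$, which is the decomposition sought.

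I do not anticipate a genuine obstacle here; the argument is a bookkeeping exercise. The one decision that matters is to absorb the small summand $L$ into $H$ rather than into $N$: $L$ need not be $\sigma^+$-homogeneous, whereas adjoining it to the large $w$-divisible group $M$ damages neither $w$-divisibility (by Fact \ref{w_d:and:direct:sums}(ii)) nor the cardinality. The other detail worth flagging is the use of $\sigma^+$, rather than $\sigma$, when invoking Lemma \ref{split:lemma}, which is precisely why the statement of the present lemma asks for $\sigma^+$-homogeneity.
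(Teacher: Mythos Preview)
Your proof is correct and follows essentially the same route as the paper: decompose $G=K\oplus M$ via Fact~\ref{basic:decomposition}, establish $|M|=w_d(G)$, apply Lemma~\ref{split:lemma} to $K$ with the cardinal $\sigma^+$, and absorb the small piece $L$ into $M$ to form $H$. The only difference is that you obtain $|M|=w_d(G)$ by invoking Fact~\ref{w_d:and:direct:sums}(i) directly (noting $w_d(K)=1$ and $w_d(M)=|M|$), whereas the paper computes this by hand via $nmG=nmM$; your shortcut is perfectly valid.
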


\begin{proof} Let $G=K\oplus M$ be the decomposition as in Fact \ref{basic:decomposition}. Use \eqref{w_d(G)} to fix an integer $n\ge 1$ such that $w_d(G)=|nG|$. Since $K$ is a bounded group, $mK=\{0\}$ for some positive integer $m$. Now  $nmG=nmK\oplus nmM=nmM$, so $|nmG|=|mnM|=|M|$, as $M$ is $w$-divisible. Since $nmG\subseteq nG$, from the choice of $n$ and \eqref{w_d(G)}, we get $w_d(G)\le |nmG|\le |nG|=w_d(G)$. This shows that $w_d(G)=|M|$.

By Lemma \ref{split:lemma}, there exist a subgroup $L$ of $K$ satisfying $|L|\leq \sigma$ and a   $\sigma^+$-homogeneous subgroup $N$ of $K$ such that $K=L\oplus N$. 
Since $K$ is bounded, so is $N$.
Clearly, 
$G=K\oplus M=L\oplus N\oplus M=N\oplus H$,
where $H=L\oplus M$. 
Since $|M|=w_d(G)\ge\sigma\ge |L|$ and $\sigma$ is infinite,
$|H|=|M|$. In particular, $|H|=w_d(G)$. 
Since $H=L\oplus M$, $|H|=|M|$ and $M$ is $w$-divisible, 
$H$ is $w$-divisible
by Fact \ref{w_d:and:direct:sums}~(ii).
\end{proof}

\begin{lemma}
\label{homogeneous:parts:in:w-divisible:groups}
Every $w$-divisible abelian group $G$ of cardinality at least $\cont$ contains a $\cont$-homo\-geneous $w$-divisible subgroup $H$ such that $|H|=|G|$.
\end{lemma}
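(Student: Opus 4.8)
The plan is to exhibit the required subgroup $H$ in closed form in each of a few cases, using Lemma~\ref{single:power:sigma}(ii) together with Lemma~\ref{combined:power:sigma}(iii) to get $\cont$-homogeneity essentially for free, and Lemma~\ref{examples:snb} (or a one-line computation from Definition~\ref{w-divisible:reformulation}) to get $w$-divisibility. Put $\lambda=|G|$; since $\lambda\ge\cont$, the cardinal $\lambda$ is uncountable.

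\emph{Reduction to the torsion case.} If $r_0(G)=\lambda$, then a maximal independent subset of $G$ consisting of elements of infinite order generates a subgroup $H\cong\Z^{(\lambda)}$; by Lemma~\ref{single:power:sigma}(ii) (applicable since $\lambda\ge\cont$) this $H$ is $\cont$-homogeneous, $|H|=\lambda=|G|$, and $H$ is \snb, hence $w$-divisible by Lemma~\ref{examples:snb}. So we may assume $r_0(G)<\lambda$. Then $|G/t(G)|<\lambda$, whence $|t(G)|=\lambda$, and, arguing exactly as in the proof of Theorem~\ref{characterization:of:strongly:unbounded:groups}, $t(G)$ is $w$-divisible. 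Replacing $G$ by $t(G)$, we may thus assume that $G$ is an uncountable torsion $w$-divisible group; write $G=\bigoplus_{p\in\Prm}G_p$, where $G_p$ is the $p$-component, and set $\sigma_p=r_p(G_p)$, so that $\sigma_p=|G_p|$ whenever $G_p$ is uncountable and $\sup_{p\in\Prm}\sigma_p=\lambda$.

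\emph{The torsion case.} I would follow the dichotomy from the proof of Lemma~\ref{torsion:w-divisible:are:snb}. \emph{Case 1: there is a finite $F\subseteq\Prm$ with $\sup_{p\notin F}\sigma_p<\lambda$.} Writing $G=\bigl(\bigoplus_{p\in F}G_p\bigr)\oplus\bigl(\bigoplus_{p\notin F}G_p\bigr)$ and applying Fact~\ref{w_d:and:direct:sums}(ii) twice, we obtain $p\in F$ such that $G_p$ is a $w$-divisible $p$-group with $|G_p|=\lambda$. Then $|p^nG_p|=\lambda$ for every $n\in\N$, and, $p^nG_p$ being an uncountable $p$-group, $r_p(p^nG_p)=\lambda$; so Lemma~\ref{snb:local:case}\,((iii)$\to$(i)), applied to $G_p$ with the cardinal $\lambda$, yields a subgroup $H\cong L_p^{(\lambda)}$ of $G_p$. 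This $H$ is $\cont$-homogeneous by Lemma~\ref{single:power:sigma}(ii), satisfies $|H|=\lambda=|G|$, and is \snb, hence $w$-divisible. \emph{Case 2: $\sup_{p\notin F}\sigma_p=\lambda$ for every finite $F\subseteq\Prm$.} Let $P=\{p\in\Prm:\sigma_p\ge\cont\}$. A short argument, splitting according to whether $\lambda>\cont$ or $\lambda=\cont$ and invoking $\cf(\cont)>\omega$ in the latter case, shows that still $\sup_{p\in P\setminus F}\sigma_p=\lambda$ for every finite $F\subseteq\Prm$; in particular $P$ is infinite. Now take $H=\bigoplus_{p\in P}\Z(p)^{(\sigma_p)}$, viewed as a subgroup of $\bigoplus_{p\in P}G_p[p]\le G$. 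For $p\in P$ the group $\Z(p)^{(\sigma_p)}$ is $\cont$-homogeneous by Lemma~\ref{single:power:sigma}(ii), so $H$ is $\cont$-homogeneous by Lemma~\ref{combined:power:sigma}(iii); moreover $|H|=\sup_{p\in P}\sigma_p=\lambda=|G|$, and for a positive integer $m$ whose set of prime divisors is $F$ we have $mH=\bigoplus_{p\in P\setminus F}\Z(p)^{(\sigma_p)}$, hence $|mH|=\sup_{p\in P\setminus F}\sigma_p=\lambda=|H|$; thus $H$ is $w$-divisible by Definition~\ref{w-divisible:reformulation}.

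\emph{Where the difficulty lies.} The delicate case is Case~2, where the torsion part of $G$ is genuinely spread over infinitely many primes and no single $G_p$ is $w$-divisible of cardinality $\lambda$: here one cannot extract $\lambda$ pairwise disjoint copies of a single countable unbounded group, nor regroup the countable unbounded summands furnished by Corollary~\ref{direct:sums:in:w-divisible:groups} into $\lambda$ mutually isomorphic pieces. The key device is to pass to the socle $\bigoplus_p G_p[p]$ and retain only the primes whose $p$-rank is already at least $\cont$, which renders $\cont$-homogeneity automatic while, thanks to $\cf(\cont)>\omega$, costing nothing in cardinality or in $w$-divisibility. The one point requiring genuine care is exactly that last claim --- that discarding the primes with $\sigma_p<\cont$ does not drop $\sup_{p\in P\setminus F}\sigma_p$ below $\lambda$ --- together with the routine cardinal arithmetic for the case $\lambda=\cont$.
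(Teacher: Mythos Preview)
Your proof is correct. The route differs from the paper's in organization rather than in spirit, but the differences are worth noting.

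The paper begins from Corollary~\ref{direct:sums:in:w-divisible:groups}: it takes a direct sum $\bigoplus_{i\in I}A_i$ of countable unbounded groups with $|I|=|G|$ and then extracts a $\cont$-homogeneous piece. For $|G|>\cont$ it pigeonholes on the at most $\cont$ isomorphism types of countable abelian groups and keeps only those types occurring with multiplicity $\ge\cont$. For $|G|=\cont$ it first replaces each $A_i$ by a member of a fixed list $\S=\S_1\cup\S_2$ of canonical unbounded groups, and then runs a second dichotomy: either enough summands lie in the countable family $\S_1$ (so pigeonhole yields $N^{(\cont)}$ for some $N\in\S_1$), or one reduces to a socle group $\bigoplus_p\Z(p)^{(\sigma_p)}$ and shows that $\{p:\sigma_p=\cont\}$ is infinite via $\cf(\cont)>\omega$.

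You instead reduce to the torsion case up front and work directly with the primary decomposition, mirroring the dichotomy of Lemma~\ref{torsion:w-divisible:are:snb}. In your Case~1 you isolate a single $w$-divisible $p$-component of full size and invoke Lemma~\ref{snb:local:case} to produce $L_p^{(\lambda)}$ explicitly; in your Case~2 you pass to the socle at once and discard primes with $\sigma_p<\cont$, treating $\lambda>\cont$ and $\lambda=\cont$ in one stroke. Your argument dispenses with the auxiliary family $\S$ and with the count of isomorphism classes of countable groups; the paper's argument, in exchange, never needs to single out a dominant prime or appeal to Lemma~\ref{snb:local:case} directly. The two approaches coincide at the one genuinely delicate point---the $\cf(\cont)>\omega$ step when $|G|=\cont$ and the mass is spread over infinitely many primes---which is exactly the point you flag.
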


\begin{proof}
By Corollary \ref{direct:sums:in:w-divisible:groups}, $G$ contains a subgroup
\begin{equation}
\label{def:A}
A=\bigoplus_{i\in I} A_i,
\end{equation}
 where $|I|=|G|$ and all $A_i$ are countable and unbounded.  In particular, $|A|=|I|=|G|$.  By the same corollary, $A$ itself is $w$-divisible.

\medskip
{\em Case 1.\/} {\sl $|G|>\cont$}.
Note that there are at most $\cont$-many  countable groups, so the sum \eqref{def:A} can be rewritten as 
\begin{equation}
\label{eq:decomp}
A=\bigoplus_{j\in J} A_j^{(\kappa_j)}
\end{equation}
 for a subset  $J$ of $I$ with $|J|\le\cont$ and a suitable family $\{\kappa_j:j\in J\}$ of cardinals such that $|A|=\sup_{j\in J} \kappa_j$.
Define $S=\{j\in J: \kappa_j< \cont\}$. Then \eqref{eq:decomp} becomes
\begin{equation}
\label{A:double:sum}
A=\bigoplus_{j\in J} A_j^{(\kappa_j)}
=
\left(\bigoplus_{j\in S} A_j^{(\kappa_j)}\right)
\oplus
\left(\bigoplus_{j\in J\setminus S} A_j^{(\kappa_j)}\right).
\end{equation}
Since $\kappa_j<\cont$ for all $j\in S$ and $|S|\le|J|\le\cont$,  we have  $\left|\bigoplus_{j\in S} A_j^{(\kappa_j)}\right|\le\cont$.
Since $|A|=|G|>\cont$, equation
\eqref{A:double:sum}
implies that $|G|=|H|$,  where 
\begin{equation}
\label{H:as:a:sum}
H=
\bigoplus_{j\in J\setminus S} A_j^{(\kappa_j)}.
\end{equation}

Let $j\in J\setminus S$ be arbitrary. Since $\kappa_j\ge\cont$ by the choice of $S$, $A_j^{(\kappa_j)}$ is $\cont$-homogeneous by 
Lemma \ref{single:power:sigma}~(ii). From this, \eqref{H:as:a:sum} and Lemma \ref{combined:power:sigma}~(iii), we conclude that  $H$ is $\cont$-homogeneous as well. 

From \eqref{H:as:a:sum}, we conclude that $H$ is a direct sum of $|H|$-many unbounded groups, so $H$ is $w$-divisible by Corollary \ref{direct:sums:in:w-divisible:groups}.

\medskip
{\em Case 2.\/} {\sl $|G|=\cont$}.
For any non-empty  set $ P \subseteq \Prm$ let 
\begin{equation}
\label{eq:Soc}
\mathrm{Soc}_ P(\T) = \bigoplus_{p\in  P}\Z(p).
\end{equation}
For every prime $p\in \Prm$ let $L_p = \bigoplus_{n\in\N}\Z(p^n)$.

It is not hard to realize that every unbounded abelian group contains a subgroup isomorphic to one of the groups from 
the fixed  family  $\S=\S_1\cup\S_2$  of unbounded groups, where
\begin{equation}
\label{eq:S_1:and:S_2}
\S_1 = \{\Z\} \cup \{\Z(p^\infty): p\in \Prm\} \cup \{L_p:p\in \Prm\}
\ 
\mbox{ and }
\ 
\S_2=\{\mathrm{Soc}_ P(\T): P\in [\Prm]^\omega\}.
\end{equation}
(Here $[\Prm]^\omega$ denotes the set of all infinite subsets of $\Prm$.)

It is not restrictive to assume that, in the decomposition \eqref{def:A}, $A_i\in \S$ for each $i\in I$.  For $l=1,2$ define  $I_l=\{i\in I: A_i\in \S_l\}$. We consider two subcases.

\medskip
{\em Subcase A\/}. {\sl $|I_1|=\cont$}.
For every $N\in\S_1$, define $E_N=\{i\in I_1: A_i\cong N\}$. Then $I_1=\bigcup_{N\in\S_1} E_N$.  Since the family $\S_1$ is countable, $|I_1|=\cont$ and $\mathrm{cf}(\cont)>\omega$, 
there exists  $N\in \S_1$ such that $|E_N|=\cont$.  Then $A$ (and thus, $G$) contains the direct sum
\begin{equation}
H=\bigoplus_{i\in E_N} 
N\cong N^{(|E_N|)}=N^{(\cont)}.
\end{equation} 
Clearly, $H$ is $\cont$-homogeneous. Since $|H|=\cont$ and $H$ is a direct sum of $\cont$-many unbounded groups, $H$ is $w$-divisible by Corollary \ref{direct:sums:in:w-divisible:groups}.

\medskip
{\em Subcase B\/}. {\sl $|I_1|<\cont$}.
Since $I=I_1\cup I_2$ and $|I|=\cont$, this implies that $|I_2|=\cont$. By discarding $A_i$ with $i\in I_1$, we may assume, without loss of generality, that  $I=I_2$; that is, $A_i\in \S_2$ for all $i\in I$. From this, \eqref{eq:Soc} and \eqref{eq:S_1:and:S_2}, we conclude that  the direct sum
\eqref{def:A}
can be re-written as
\begin{equation}
\label{A:as:sum:of:Z(p)}
A=\bigoplus_{p\in\Prm} \Z(p)^{(\sigma_p)}
\end{equation}
for a suitable family $\{\sigma_p:p\in\Prm\}$ of cardinals.
\begin{claim}
The set  $C=\{p\in\Prm:\sigma_p=\cont\}$ is infinite.
\end{claim}

\begin{proof} Let $P=\{p_1,p_2,\dots,p_k\}$ be an arbitrary finite set. Define $m=p_1 p_2\dots p_k$. From \eqref{A:as:sum:of:Z(p)},
$$
mA=\bigoplus_{p\in\Prm\setminus P} \Z(p)^{(\sigma_p)},
$$
so 
\begin{equation}
\label{mA:as:countable:sup}
\sup\{\sigma_p:p\in\Prm\setminus P\}=|mA|=|A|=\cont
\end{equation}
as $A$ is $w$-divisible. Note that \eqref{A:as:sum:of:Z(p)} implies that $\sigma_p\le |A|=\cont$ for every $p\in\Prm$. Since  $\mathrm{cf}(\cont)>\omega$ and the set $\Prm\setminus P$ is countable, from \eqref{mA:as:countable:sup} we conclude that $\sigma_p=\cont$ for some  $p\in \Prm\setminus P$. Therefore, $C\setminus P\not=\emptyset$. Since $P$ is an arbitrary finite subset of $\Prm$, this shows that the set  $C=\{p\in\Prm:\sigma_p=\cont\}$ is infinite.
\end{proof}

By this claim, $A$ (and thus, $G$) contains the direct sum
$$
H=\bigoplus _{p\in C} \Z(p)^{(\cont)}\cong 
\left(\bigoplus _{p\in C} \Z(p)\right)^{(\cont)}
=
\mathrm{Soc}_C(\T)^{(\cont)}.
$$

Clearly, $|H|=\cont$. Finally, since $\mathrm{Soc}_C(\T)^{(\cont)}$ is unbounded, $H$ is $w$-divisible by Corollary \ref{direct:sums:in:w-divisible:groups}.
\end{proof}

\section{The \HM\ construction}\label{HM:construction}
\label{Sec:5}

Let $G$ be an abelian group, and let $I$ be the unit interval $[0,1]$. As usual, $G^I$ denotes the set of all functions from $I$ to $G$.
Clearly, $G^I$  is a group under the coordinate-wise operations. For $g\in G$ and $t\in (0,1]$ let $g_t\in G^I$ be the function defined by
$$
g_t(x)=
\left\{
\begin{array}{rl}
g & \mbox{if } x< t \\
0 & \mbox{if } x\ge  t,
    \end{array}
\right.
$$
where $0$ is the zero element of $G$. For each $t\in (0,1]$, $G_t=\{g_t:g\in G\}$ is a subgroup of $G^I$ isomorphic to $G$.
Therefore, $\HMi(G)=\sum_{t\in (0,1]} G_t$ is a subgroup of $G^I$. It is straightforward to check that this sum is direct, so that
\begin{equation}
\label{direct:decomposition:of:HM(G)}
\HMi(G)=\bigoplus_{t\in (0,1]} G_t.
\end{equation}
Since $G_t\cong G$ for each $t\in (0,1]$, from \eqref{direct:decomposition:of:HM(G)} we conclude that 
$\HMi(G)\cong G^{(\cont)}$. It follows from this that $\HMi(G)$ is divisible and abelian whenever $G$ is. Thus, we have proved the 
following:

\begin{lemma}\label{Dima}
For every abelian group $G$, the group $\HMi(G)$ is algebraically isomorphic to $G^{(\cont)}$. 
In particular, if $G$ is divisible and abelian, then so is $\HMi(G)$.
\end{lemma}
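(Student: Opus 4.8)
The plan is to make rigorous the three displayed facts stated just before the lemma: that each $G_t$ is a subgroup of $G^I$ isomorphic to $G$, that the sum $\sum_{t\in(0,1]}G_t$ is direct, and that consequently $\HMi(G)\cong\bigoplus_{t\in(0,1]}G_t\cong G^{(\cont)}$.

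First I would check that $G_t=\{g_t:g\in G\}$ is a subgroup of $G^I$ and that $g\mapsto g_t$ is a group isomorphism $G\to G_t$. This is immediate from the definitions: since the operations on $G^I$ are coordinatewise and the threshold $t$ is fixed, one has $(g+h)_t=g_t+h_t$ and $(-g)_t=-(g_t)$, so $g\mapsto g_t$ is a homomorphism onto $G_t$; it is injective because evaluating $g_t$ at any point $x<t$ returns $g$.

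The main step — and the only one requiring an argument — is the directness of the sum $\HMi(G)=\sum_{t\in(0,1]}G_t$. Suppose $g^1_{t_1}+g^2_{t_2}+\dots+g^n_{t_n}=0$ in $G^I$ for pairwise distinct $t_1,\dots,t_n\in(0,1]$ and elements $g^1,\dots,g^n\in G$; I must show every $g^k=0$. After reindexing we may assume $t_1<t_2<\dots<t_n$, and we set $t_0=0$. For each $k=1,\dots,n$, pick $x$ with $t_{k-1}\le x<t_k$ (possible since the $t_i$ are strictly increasing and all lie in $[0,1]$). At such $x$ every summand $g^j_{t_j}$ with $j<k$ vanishes (as $x\ge t_j$) while every summand with $j\ge k$ equals $g^j$ (as $x<t_j$); hence the hypothesis gives $g^k+g^{k+1}+\dots+g^n=0$ for each $k$. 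Subtracting the identity for $k+1$ from the one for $k$ yields $g^k=0$ for every $k$. Equivalently, one may phrase this via the jump of a function at a point: each $g_t$ drops by $g$ at $t$ and is locally constant elsewhere, so a vanishing sum of finitely many such functions (whose jump points are distinct) must have all jumps equal to $0$. This establishes \eqref{direct:decomposition:of:HM(G)}.

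Finally, since $|(0,1]|=\cont$ and $G_t\cong G$ for every $t\in(0,1]$, the decomposition \eqref{direct:decomposition:of:HM(G)} gives $\HMi(G)\cong G^{(\cont)}$. For the last assertion, recall that an arbitrary direct sum of divisible abelian groups is again divisible and abelian (given $n\ge 1$ and an element of finite support, divide each coordinate by $n$); thus if $G$ is divisible abelian, so is $G^{(\cont)}$, and hence so is $\HMi(G)$. I do not anticipate any genuine obstacle: the content is essentially bookkeeping, with the directness of the sum being the one place where the specific shape of the step functions $g_t$ is actually used.
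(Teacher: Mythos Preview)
Your proposal is correct and follows exactly the paper's approach: the paper establishes the displayed decomposition \eqref{direct:decomposition:of:HM(G)} in the text preceding the lemma (merely asserting that directness is ``straightforward to check''), then concludes $\HMi(G)\cong G^{(\cont)}$ from $G_t\cong G$ and $|(0,1]|=\cont$. You have simply filled in the details the paper omits, and your directness argument via evaluating at points between consecutive thresholds is the natural one.
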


In general even a group $G$ with $|G|\ge \cont$ need not be isomorphic to $\HMi(G)$. Indeed, as shown in \cite{HM}, $G$ is contained in $\HMi(G)$
and splits as a semidirect addend. So any group $G$ indecomposable into a non-trivial semi-direct product fails to be algebraically isomorphic to $\HMi(G)$.

When $G$ is an abelian topological group, Hartman and Mycielski \cite{HM} equip $\HMi(G)$ with a topology making it pathwise connected and locally pathwise connected. Let $\mu$ be the standard probability measure on $I$. The {\em \HM\ topology\/} on the group $\HMi(G)$ is the topology generated by taking the family of all sets of the form
\begin{equation}
\label{basic:nghb:in:HM}
O(U,\varepsilon)=\{g\in G^I: \mu(\{t\in I: g(t)\not\in U\})<\varepsilon\},
\end{equation}
where $U$ is an open neighbourhood of $0$ in $G$ and $\varepsilon>0$, as the base at the zero function of $\HMi(G)$.

The next lemma lists two properties of the functor $G\mapsto \HMi(G)$ that are needed for our proofs.

\begin{lemma}\label{HM:groups} 
Let $G$ be an abelian topological group.
\begin{itemize}
\item[(i)] $\HMi(G)$ is pathwise connected and locally pathwise connected.
\item[(ii)] If $D$ is a dense subgroup of $G$, then $\HMi(D)$ is a dense subgroup of $\HMi(G)$.
\end{itemize}
\end{lemma}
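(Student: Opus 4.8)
The plan is to read off from the explicit ``step function'' description \eqref{direct:decomposition:of:HM(G)} a canonical family of paths joining an arbitrary element of $\HMi(G)$ to the zero function, and then to combine these with the fact that $\HMi(G)$ is a topological group, so that both connectivity properties need only be checked locally at the zero function and can then be transported by translations. Concretely, for $g\in\HMi(G)$ write $g=\sum_{j=1}^{n}(g_j)_{t_j}$ with $0<t_1<\dots<t_n\le 1$ (the case $g=0$ being trivial) and define the \emph{scaling path} $\gamma_g\colon[0,1]\to\HMi(G)$ by $\gamma_g(0)=0$ and $\gamma_g(s)=\sum_{j=1}^{n}(g_j)_{st_j}$ for $s\in(0,1]$; equivalently, $\gamma_g(s)$ carries $x\in[0,s)$ to $g(x/s)$ and $x\in[s,1]$ to $0$. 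Since $g$ has at most $n$ points of discontinuity, a routine estimate gives $\mu(\{x\in I:\gamma_g(s)(x)\ne\gamma_g(s')(x)\})\le (n+1)|s-s'|$ for all $s,s'\in[0,1]$; hence $\gamma_g(s)-\gamma_g(s')\in O(U,\varepsilon)$ for every neighbourhood $U$ of $0$ in $G$ as soon as $|s-s'|<\varepsilon/(n+1)$, so $\gamma_g$ is continuous. As $\gamma_g(0)=0$ and $\gamma_g(1)=g$, the path component of $0$ is all of $\HMi(G)$, so $\HMi(G)$ is pathwise connected. This is essentially due to Hartman and Mycielski \cite{HM}, but I record the scaling paths because they are reused below.

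For local pathwise connectedness it suffices, by translation, to find for each basic neighbourhood $U=O(U_0,\varepsilon)$ of $0$ an open neighbourhood $V$ of $0$ with $\overline{V}\subseteq U$ and $\overline{V}$ pathwise connected. Using regularity of $G$, I would choose an open neighbourhood $U_1$ of $0$ in $G$ with $\overline{U_1}\subseteq U_0$ and put $V=O(U_1,\varepsilon/2)$. First, $\overline{V}\subseteq O(\overline{U_1},\varepsilon)\subseteq O(U_0,\varepsilon)=U$: if $g\notin O(\overline{U_1},\varepsilon)$, then the finitely many values $v$ of $g$ lying outside $\overline{U_1}$ admit a common neighbourhood $W$ of $0$ in $G$ with $(v+W)\cap U_1=\emptyset$, and then every $h\in g+O(W,\varepsilon/2)$ has $h(x)\notin U_1$ on a set of measure $>\varepsilon/2$, so $g+O(W,\varepsilon/2)$ misses $V$. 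Second, $\overline{V}$ is pathwise connected, because for $g\in\overline{V}$ the scaling path $\gamma_g$ stays in $\overline{V}$: given $s\in(0,1]$ and a basic neighbourhood $\gamma_g(s)+O(W,\delta)$, pick $h\in V\cap(g+O(W,s\delta))$; then $\gamma_h(s)\in V$ (since $\mu(\{x:\gamma_h(s)(x)\notin U_1\})=s\,\mu(\{y:h(y)\notin U_1\})<\varepsilon/2$) and $\gamma_h(s)-\gamma_g(s)\in O(W,s^{2}\delta)\subseteq O(W,\delta)$, so $\gamma_h(s)\in V\cap(\gamma_g(s)+O(W,\delta))$. Thus $\gamma_g$ is a path from $0$ to $g$ inside $\overline{V}$, completing (i).

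For (ii), $\HMi(D)=\bigoplus_{t\in(0,1]}D_t$ is a subgroup of $\HMi(G)$, so by translation it is enough to meet every basic neighbourhood $g+O(U,\varepsilon)$ of an arbitrary $g\in\HMi(G)$. Realise $g$ as a step function with breakpoints $0=t_0<t_1<\dots<t_n$, equal to $w_i\in G$ on $[t_{i-1},t_i)$ and to $0$ on $[t_n,1]$. Using density of $D$, pick $e_i\in D\cap(w_i+U)$ and set $d=\sum_{i=1}^{n}\bigl((e_i)_{t_i}-(e_i)_{t_{i-1}}\bigr)\in\HMi(D)$, with $(e_i)_{t_0}$ read as the zero function. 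Then $d-g$ equals $e_i-w_i\in U$ on $[t_{i-1},t_i)$ and $0$ on $[t_n,1]$, so $d\in g+O(U,\varepsilon)$, as required.

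I expect the main obstacle to be the local pathwise connectedness of part (i) in the form demanded by this paper, where ``locally $\mathscr{P}$'' requires that the \emph{closure} of a small neighbourhood be both contained in the prescribed one and itself path-connected: the containment $\overline{V}\subseteq U$ is what forces the shrinking step $\overline{U_1}\subseteq U_0$ (hence the appeal to regularity of $G$), and the path-connectedness of $\overline{V}$ is what requires verifying that the scaling paths do not escape the closure. By contrast, the pathwise connectedness in (i) and all of (ii) are straightforward once the scaling paths and the step-function picture are in hand.
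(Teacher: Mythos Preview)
Your proof is correct and, in spirit, close to the paper's. Both arguments produce an explicit path from an arbitrary step function to $0$ and then transport this by translations; the paper uses the \emph{truncation} path $s\mapsto h_s$ (where $h_s(x)=h(x)$ for $x<s$ and $0$ otherwise), while you use the \emph{rescaling} path $s\mapsto\gamma_g(s)$ (where $\gamma_g(s)(x)=g(x/s)$ for $x<s$ and $0$ otherwise). Either family of paths works equally well. For (ii), the paper argues via density of $D_t$ in $G_t$ together with continuity of the group operation, which amounts to the same explicit step-function approximation you wrote down.

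The one genuine difference concerns local pathwise connectedness. The paper simply shows that each basic open set $O(U,\varepsilon)$ is itself pathwise connected (the truncation path $s\mapsto h_s$ visibly stays inside $O(U,\varepsilon)$ whenever $h$ does), and declares this sufficient. You, by contrast, take the paper's own definition of ``locally $\mathscr{P}$'' literally---it requires the \emph{closure} of a small neighbourhood to lie inside the given one and to be pathwise connected---and supply the extra work: shrinking $U_0$ to $U_1$ with $\overline{U_1}\subseteq U_0$ via regularity of $G$, verifying $\overline{O(U_1,\varepsilon/2)}\subseteq O(U_0,\varepsilon)$, and checking that the rescaling paths stay inside the closure. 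Your version is thus more thorough with respect to the definition adopted in the paper; the paper's argument, read literally, establishes only the more common formulation (a base of pathwise connected open neighbourhoods), which is all that is actually used downstream.
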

\begin{proof}
(i)
For every $g\in G$ and $t\in (0,1]$, the map $f: [0,t]\to \HMi(G)$ defined by $f(s)=g_s$ for $s\in [0,t]$ is a continuous injection, so $f([0,t])$ is a path  in $\HMi(G)$ connecting $f(0)=0$ and $f(t)=g_t$. Combining this with \eqref{direct:decomposition:of:HM(G)}, one concludes that $\HMi(G)$ is pathwise connected. 

To check that $\HMi(G)$ is locally pathwise connected, it suffices to show that every set of the form $O(U,\varepsilon)$ as in 
\eqref{basic:nghb:in:HM} is pathwise connected. Let $U$ be an open neighbourhood of $0$ in $G$ and let $\varepsilon>0$. Fix 
an arbitrary $h\in O(U,\varepsilon)\setminus\{0\}$. For each $s\in [0,1]$,  define
$$
h_s(x)=
\left\{
\begin{array}{rl}
h(x) & \mbox{if } x< s \\
0 & \mbox{if } x\ge  s,
    \end{array}
\right.
$$
and note that $h_s\in O(U,\varepsilon)$. Therefore, the map $\varphi: [0,1]\to \HMi(G)$ defined by $\varphi(s)=h_s$ for $s\in [0,t]$
is continuous, so $\varphi([0,1])$ is a path  in $O(U,\varepsilon)$ connecting $f(0)=0$ and $f(1)=h$.

(ii)
Let $D$ be a dense subgroup of $G$.  First, from the definition of the topology of $\HMi(G)$,  one easily sees that $D_t$ is a dense subgroup of $G_t$ for each $t\in(0,1]$. Let $g\in \HMi(G)$ be arbitrary, and let $W$ be an open subset of $\HMi(G)$  containing $g$. By \eqref{direct:decomposition:of:HM(G)}, $g=\sum_{i=1}^n (g_i)_{t_i}$ for some $n\in \N$, $g_1,g_2,\dots,g_n\in G$ and $t_1,t_2,\dots,t_n\in (0,1]$. Since the group operation in $\HMi(G)$ is continuous, for every $i=1,2,\dots,n$ we can fix an open neighbourhood $V_i$ of $(g_i)_{t_i}$ in $\HMi(G)$ such that $\sum_{i=1}^n V_i\subseteq W$. For each $i=1,2,\dots,n$, we use the fact that $D_{t_i}$ is dense in $G_{t_i}$ to select $d_i\in V_i\cap D_{t_i}$. Clearly, $d=\sum_{i=1}^n d_i\in \sum_{i=1}^n V_i\subseteq W$.
Since $d_i\in D_{t_i}\subseteq \HMi(D)$ for $i=1,2,\dots,n$, 
it follows that $d\in \HMi(D)$. This shows that $d\in \HMi(D)\cap W\not=\emptyset$.
\end{proof}

\begin{corollary}
\label{c-homogeneous:groups:have:pathwise:connected:group:topologies} Every $\cont$-homogeneous group $G$ admits a pathwise connected, locally pathwise connected  group topology.
\end{corollary}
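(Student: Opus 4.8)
The plan is to realize the required topology on $G$ by pulling back the \HM\ topology along an algebraic isomorphism. First I would endow $G$ with the discrete topology, turning it into an abelian topological group, and form $\HMi(G)$ equipped with the \HM\ topology as in Section \ref{HM:construction}. By Lemma \ref{HM:groups}~(i) this topological group is pathwise connected and locally pathwise connected, and (since the ground group is discrete) it is readily seen to be Hausdorff: by \eqref{direct:decomposition:of:HM(G)} any non-zero element of $\HMi(G)$ is a finite sum $\sum_{i=1}^{k}(g_i)_{t_i}$ with the $t_i$ distinct and the $g_i$ non-zero, and such a function takes a non-zero constant value on an interval of positive length, hence lies outside $O(\{0\},\varepsilon)$ for small $\varepsilon$; thus $\bigcap_{\varepsilon>0}O(\{0\},\varepsilon)=\{0\}$. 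So the \HM\ topology is a group topology on $\HMi(G)$ in the (Hausdorff) sense used throughout the paper.

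Next I would combine the $\cont$-homogeneity of $G$ with Lemma \ref{Dima}: the former gives $G\cong G^{(\cont)}$ by Definition \ref{definition:c-homogeneous}, while the latter gives $\HMi(G)\cong G^{(\cont)}$, so $G$ and $\HMi(G)$ are isomorphic as abstract groups. Fixing any such isomorphism $\varphi\colon G\to\HMi(G)$ and transporting the \HM\ topology back along $\varphi$ (declaring $\varphi^{-1}(W)$ open in $G$ for every open $W\subseteq\HMi(G)$) yields a group topology on $G$ for which $\varphi$ is a topological isomorphism. Since pathwise connectedness and local pathwise connectedness are preserved by topological isomorphisms, the resulting topology on $G$ enjoys both properties, which is exactly what is claimed.

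I do not expect a genuine obstacle here, since the substantive content has already been established in Lemmas \ref{Dima} and \ref{HM:groups}; the functor $G\mapsto\HMi(G)$ supplies the connectedness, and $\cont$-homogeneity is precisely what allows the passage from $\HMi(G)\cong G^{(\cont)}$ back to $G$ itself. The only mild point requiring care is checking that the \HM\ topology on $\HMi(G)$ is Hausdorff so that it qualifies as a group topology under the paper's standing convention, and for a discrete ground group this is immediate from the measure computation indicated above.
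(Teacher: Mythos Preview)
Your proof is correct and follows essentially the same route as the paper: equip $G$ with the discrete topology, invoke Lemma~\ref{HM:groups}(i) to get pathwise and local pathwise connectedness of $\HMi(G)$, then use Lemma~\ref{Dima} together with $\cont$-homogeneity to identify $\HMi(G)\cong G^{(\cont)}\cong G$ and transport the topology. The only addition you make is the explicit Hausdorffness check, which the paper leaves implicit (relying on the standard fact from~\cite{HM} that $\HMi(G)$ is Hausdorff whenever $G$ is).
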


\begin{proof}
Consider $G$ with the discrete topology. By Lemma \ref{HM:groups}~(i), $\HMi(G)$ is pathwise connected and locally pathwise connected. Since $\HMi(G)\cong G^{(\cont)}$ by Lemma \ref{Dima}, and $G^{(\cont)}\cong G$ by our assumption, we can transfer the topology from $\HMi(G)$ to $G$ by means of the isomorphism $\HMi(G)\cong G$. Therefore, $G$ admits a pathwise connected, locally pathwise connected group topology.
\end{proof}

\section{Proof of Theorem \ref{Main}}
\label{proofs}
\label{w-divisible-proof}

\begin{lemma}
\label{embedding:lemma} Let $H$ be a subgroup of an abelian group $G$ and $K$ be a divisible abelian group such that
\begin{equation}
r_p(H)<r_p(K)
\mbox{ and }
r_p(G)\le r_p(K)
\mbox{ for all }
p \in \{0\}\cup \Prm.
\end{equation}
Then every monomorphism $j: H \to K$ can be extended to a monomorphism $j': G \to K$.
\end{lemma}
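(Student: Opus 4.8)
The plan is to pass to divisible hulls, where the part of the ambient group ``occupied by'' $H$ splits off as a direct summand, and then to reassemble a monomorphism out of that summand and its complement. Concretely, fix a divisible hull $\bar G$ of $G$, so that $G\le\bar G$, the group $\bar G$ is divisible, and $r_p(\bar G)=r_p(G)$ for every $p\in\{0\}\cup\Prm$; inside $\bar G$ fix a divisible hull $E$ of $H$ (it exists because $\bar G$ is divisible, hence injective, so the inclusion $H\hookrightarrow\bar G$ extends over the injective hull of $H$, remaining injective by essentiality), so that $H\le E\le\bar G$, $E$ is divisible, $H$ is essential in $E$, and $r_p(E)=r_p(H)$ for all $p$. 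Being divisible, $E$ is a direct summand of $\bar G$, say $\bar G=E\oplus D'$ with $D'$ divisible and $r_p(E)+r_p(D')=r_p(G)$. On the other side, since $K$ is divisible, hence injective, the monomorphism $j\colon H\to K$ extends to a homomorphism $\hat\jmath\colon E\to K$; since $\ker\hat\jmath\cap H=\ker j=\{0\}$ and $H$ is essential in $E$, the map $\hat\jmath$ is injective. Hence $\hat\jmath(E)$ is a divisible subgroup of $K$, so $K=\hat\jmath(E)\oplus K'$ with $K'$ divisible and $r_p(H)+r_p(K')=r_p(\hat\jmath(E))+r_p(K')=r_p(K)$ for all $p$.

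The crux of the argument — and the only place the hypotheses enter — is the estimate $r_p(D')\le r_p(K')$ for every $p\in\{0\}\cup\Prm$. From the relations just obtained we have $r_p(H)+r_p(D')=r_p(G)\le r_p(K)=r_p(H)+r_p(K')$. If $r_p(K)$ is finite, all quantities involved are finite and we may cancel $r_p(H)$ to conclude. If $r_p(K)$ is infinite, then the strict inequality $r_p(H)<r_p(K)$ forces $r_p(K')=r_p(K)$, and therefore $r_p(D')\le r_p(\bar G)=r_p(G)\le r_p(K)=r_p(K')$. This infinite case is exactly where strictness is indispensable: without it, the complement $K'$ of the $H$-part of $K$ could be too small.

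Now both $D'$ and $K'$ are divisible with $r_p(D')\le r_p(K')$ for all $p\in\{0\}\cup\Prm$, so writing each of them in the canonical form $\Q^{(r_0)}\oplus\bigoplus_{p\in\Prm}\Z(p^\infty)^{(r_p)}$ for divisible abelian groups produces a monomorphism $\theta\colon D'\to K'$. It remains to define $j'\colon G\to K$ by $j'(g)=\hat\jmath(\pi_E(g))+\theta(\pi_{D'}(g))$, where $\pi_E$ and $\pi_{D'}$ are the projections of $\bar G=E\oplus D'$ onto the two summands. This is a homomorphism $G\to K$; it extends $j$, since for $h\in H\le E$ we have $\pi_{D'}(h)=0$ and $\pi_E(h)=h$, whence $j'(h)=\hat\jmath(h)=j(h)$; and it is injective, since $j'(g)=0$ forces $\hat\jmath(\pi_E(g))=0$ and $\theta(\pi_{D'}(g))=0$ (the sum being direct in $K$), so $\pi_E(g)=\pi_{D'}(g)=0$ by injectivity of $\hat\jmath$ and of $\theta$, i.e.\ $g=0$. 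The main obstacle to anticipate is the rank bookkeeping of the middle paragraph: one must check that peeling the $H$-part off $K$ leaves a complement large enough to absorb the $H$-free divisible part $D'$ of $\bar G$, and this is precisely what the strict inequality $r_p(H)<r_p(K)$ guarantees.
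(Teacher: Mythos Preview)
Your proof is correct and takes a genuinely different route from the paper. The paper stays inside $G$: by Zorn's lemma it chooses a subgroup $M$ maximal with respect to $M\cap H=\{0\}$, so that $H\oplus M$ is essential in $G$; it then picks a free subgroup $F\le M$ with $M/F$ torsion, so that $H\oplus\mathrm{Soc}(M)\oplus F$ is essential in $G$; the rank hypotheses are used to choose, inside $K$, a free group $F'\cong F$ and socle pieces $T_p\cong M[p]$ disjoint from $j(H)$, yielding a monomorphism $j_1$ on this essential subgroup; finally any extension of $j_1$ to $G$ (which exists by injectivity of $K$) is automatically injective because its kernel meets the essential subgroup trivially. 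By contrast, you pass to the divisible hull $\bar G$, split off the divisible hull $E$ of $H$ as a direct summand $\bar G=E\oplus D'$, and reduce the problem to embedding the divisible complement $D'$ into a divisible complement $K'$ of $\hat\jmath(E)$ in $K$, which comes down to a clean comparison of ranks. Your approach is more conceptual and exploits the structure theorem for divisible groups directly, avoiding the explicit socle-plus-free bookkeeping; the paper's approach is more elementary in that it never leaves $G$ and never invokes divisible hulls, at the cost of a slightly longer verification that $H\oplus M$ is essential.
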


\begin{proof}
Let us recall some notions used in this proof. Let $G$ be an abelian group. A subgroup $H$ of $G$ is is said to be {\em essential in $G$\/} provided that  $ \langle x \rangle\cap H\not=\{0\}$ for every non-trivial cyclic subgroup $\langle x \rangle$ of $G$. As usual, $\mathrm{Soc}(G)=\bigoplus_{p\in\Prm} G[p]$
denotes the {\em socle\/} of $G$.

By means of Zorn's lemma, we can find a maximal subgroup $M$  of $G$ with the property
\begin{equation}
\label{eq:*}
M \cap H = \{0\}. 
\end{equation}
Then the subgroup $H + M = H \oplus M$ is essential in $G$. Indeed, if $0\ne x \in G$ with $(H + M ) \cap\langle x \rangle
=    \{0\} $, then also $(\langle x \rangle + M ) \cap H =    \{0\}$. Indeed, if 
$$
H \ni h = kx + m \in   \langle x \rangle + M
$$ 
for some $k \in \Z$ and $m\in M$, then 
$$
h - m  = kx \in (H + M ) \cap\langle x \rangle=    \{0\};
$$
 so $kx=0$ and $h=m$, which yields $h=0$ in view of \eqref{eq:*}.
 
Fix a free subgroup $F$ of $M$ such that $M/F$ is torsion. Then $\mathrm{Soc}(M) \oplus F$ is an essential subgroup of $M$.
Let us see first that $j$ can be extended to a monomorphism 
\begin{equation}\label{Ext:lemma}
j_1: H \oplus \mathrm{Soc}(M)  \oplus F \to K. 
\end{equation}
Since $r_0(H) < r_0(K)$ and $r_0(G) \leq r_0(K)$, 
we can find a free subgroup $F'$ of $K$ such that $F' \cap j(H) = \{0\}$ and $r_0(F') = r_0(F)$. On the other hand, for every prime $p\in \Prm$,
$r_p(H) < r_p(K)$ and $r_p(G) \leq r_p(K)$. Hence, we can find in $K[p]$ a subgroup $T_p$ of  $p$-rank $r_p(M)$ with $T_p \cap j(H)[p] =  \{0\}$. Since $F' \cong F$ and $T_p \cong M[p]$ for every prime $p$, we obtain the desired monomorphism $j_1$ as in (\ref{Ext:lemma})  extending $j$.  Since the subgroup $H \oplus \mathrm{Soc}(M)  \oplus F$ of $G$ is essential in $G$, any extension $j': G \to K$ of $j_1$ will be a monomorphism as $j_1$ is a monomorphism. Such an extension $j'$ exists since $K$ is divisible \cite[Theorem 21.1]{Fu}.
\end{proof}

\begin{lemma}
\label{w-divisible} 
Every  $w$-divisible abelian group of size at least $\cont$ admits a \dpc, locally \dpc\ group topology.
\end{lemma}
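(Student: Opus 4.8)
The plan is to realise $G$ as a dense subgroup of a divisible, pathwise connected, locally pathwise connected topological group built via the \HM\ construction, using the $\cont$-homogeneous $w$-divisible subgroup supplied by Lemma~\ref{homogeneous:parts:in:w-divisible:groups} as a pathwise connected ``skeleton'' sitting densely inside that group, and Lemma~\ref{embedding:lemma} to push the embedding of the skeleton forward to all of $G$. This refines the construction behind Corollary~\ref{c-homogeneous:groups:have:pathwise:connected:group:topologies}, where $\HMi$ is applied to a discrete group; here it is applied to a power of $\T$ instead.

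First I set $\tau=|G|\ge\cont$ and use Lemma~\ref{homogeneous:parts:in:w-divisible:groups} to fix a subgroup $H\le G$ which is $\cont$-homogeneous, $w$-divisible, and of cardinality $\tau$. Since $H$ is $w$-divisible, $w_d(H)=|H|=\tau$, so Corollary~\ref{main:corollary} (with $\sigma=\tau$) yields a monomorphism $\pi\colon H\to\T^{\kappa}$ with dense image $D=\pi(H)$, where $\kappa=2^{\tau}>\tau$. Now put $\Omega=\HMi(\T^{\kappa})$ with the \HM\ topology. By Lemma~\ref{HM:groups}(i), $\Omega$ is pathwise connected and locally pathwise connected (and Hausdorff, as $\T^{\kappa}$ is); by Lemma~\ref{Dima}, $\Omega$ is algebraically isomorphic to $(\T^{\kappa})^{(\cont)}$, so $\Omega$ is divisible and $r_p(\Omega)\ge r_p(\T^{\kappa})\ge\kappa>\tau$ for every $p\in\{0\}\cup\Prm$. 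By Lemma~\ref{HM:groups}(ii), $\HMi(D)$ is a dense subgroup of $\Omega$; moreover, from the explicit form \eqref{basic:nghb:in:HM} of the basic neighbourhoods one checks that the subspace topology $\HMi(D)$ inherits from $\Omega$ is precisely its intrinsic \HM\ topology (coming from the dense subgroup $D$ of $\T^{\kappa}$), so $\HMi(D)$ is pathwise connected and, by the proof of Lemma~\ref{HM:groups}(i), each of its basic neighbourhoods of $0$, namely each set of the form $O(U,\varepsilon)\cap\HMi(D)$ with $U$ open in $\T^{\kappa}$ and $\varepsilon>0$, is pathwise connected. Finally, since $H$ is $\cont$-homogeneous and $D\cong H$, Lemma~\ref{Dima} gives $\HMi(D)\cong D^{(\cont)}\cong H^{(\cont)}\cong H$, so the inclusion $\HMi(D)\hookrightarrow\Omega$ may be viewed as a monomorphism $j\colon H\to\Omega$.

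Now I apply Lemma~\ref{embedding:lemma} with $K=\Omega$: the hypotheses $r_p(H)<r_p(\Omega)$ and $r_p(G)\le r_p(\Omega)$ hold for all $p\in\{0\}\cup\Prm$ because $r_p(H),r_p(G)\le\tau<r_p(\Omega)$, so $j$ extends to a monomorphism $j'\colon G\to\Omega$. As $j(H)=\HMi(D)$ is dense in $\Omega$ and $j(H)\subseteq j'(G)$, the image $j'(G)$ is dense in $\Omega$. Transport the subspace topology of $j'(G)$ back to $G$ along $j'$; this is a Hausdorff group topology on $G$ under which $G$ is topologically isomorphic to $j'(G)$, and it remains to check that $j'(G)$ is \dpc\ and locally \dpc. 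For \dpc: $\HMi(D)$ is a pathwise connected subspace of $j'(G)$ containing $0$, hence it lies in the arc component of $0$ in $j'(G)$, and it is dense in $j'(G)$; so $j'(G)$ is \dpc. For local \dpc: given $x\in j'(G)$ and a neighbourhood $U$ of $x$, translation reduces to $x=0$, and we may assume $U=O(U_0,\varepsilon)\cap j'(G)$ for some open $U_0\ni 0$ in $\T^{\kappa}$ and some $\varepsilon>0$. Choose an open $U_1\ni 0$ in $\T^{\kappa}$ with $\overline{U_1}\subseteq U_0$, put $V_1'=O(U_1,\varepsilon/2)$ and $V=V_1'\cap j'(G)$. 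A short computation using denseness of $j'(G)$ and of $\HMi(D)$ in $\Omega$, openness of $V_1'$, and the inclusion $\overline{V_1'}\subseteq O(U_0,\varepsilon)$ in $\Omega$ (which follows from \eqref{basic:nghb:in:HM}, the choice $\overline{U_1}\subseteq U_0$, and $\varepsilon/2<\varepsilon$) shows that $\overline{V}^{j'(G)}=\overline{V_1'}^{\Omega}\cap j'(G)\subseteq U$ and that $\HMi(D)\cap V_1'$ is dense in $\overline{V}^{j'(G)}$; since $\HMi(D)\cap V_1'=O(U_1,\varepsilon/2)\cap\HMi(D)$ is a pathwise connected basic \HM\ neighbourhood of $0$ in $\HMi(D)$, the set $\overline{V}^{j'(G)}$ is \dpc. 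This produces the desired group topology.

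I expect the main obstacles to be two bookkeeping points rather than any deep difficulty. The first is the identification, for a dense subgroup $D$ of a topological group $X$, of the subspace topology induced on $\HMi(D)\subseteq\HMi(X)$ with the intrinsic \HM\ topology of $D$; this is implicit in (and essentially contained in the proof of) Lemma~\ref{HM:groups}(ii), and it is what lets the pathwise connectedness of basic neighbourhoods established in the proof of Lemma~\ref{HM:groups}(i) be used for $\HMi(D)$ verbatim. The second is the closure estimate $\overline{O(U_1,\varepsilon/2)}\subseteq O(U_0,\varepsilon)$ inside $\Omega$, needed to keep $\overline{V}$ inside $U$ in the local \dpc\ argument; it follows from the explicit description \eqref{basic:nghb:in:HM} together with the regularity of $\T^{\kappa}$ and the choice $\overline{U_1}\subseteq U_0$, but it is where the computation must be carried out with care.
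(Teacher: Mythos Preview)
Your proof is correct and follows the paper's argument essentially step for step: find the $\cont$-homogeneous $w$-divisible subgroup $H$ via Lemma~\ref{homogeneous:parts:in:w-divisible:groups}, embed it densely in $\T^{2^\tau}$ via Corollary~\ref{main:corollary}, pass to $\HMi$, identify $\HMi(D)\cong H$ using $\cont$-homogeneity and Lemma~\ref{Dima}, and then extend to $G$ via Lemma~\ref{embedding:lemma}. The only substantive difference is that the paper compresses the verification that $j'(G)$ is locally \dpc\ into the single clause ``since $N$ is pathwise connected and locally pathwise connected, we conclude that $G'$ is \dpc\ and locally \dpc,'' whereas you spell out the closure estimate $\overline{O(U_1,\varepsilon/2)}\subseteq O(U_0,\varepsilon)$ and the identification of the subspace and intrinsic \HM\ topologies on $\HMi(D)$; both points are genuine but routine, and your flagging them as the places needing care is accurate.
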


\begin{proof} Let $G$ be a $w$-divisible abelian group such that $|G|=\tau\ge\cont$. Use Lemma
\ref{homogeneous:parts:in:w-divisible:groups} to find a $w$-divisible subgroup $H$ of $G$ such that $|H|=\tau$ and $H\cong H^{(\cont)}$. Since $H$ is $w$-divisible, $w_d(H)=|H|=\tau$. By Corollary \ref{main:corollary} (applied to $H$), there exists a monomorphism $\pi:H\to \T^{\kappa}$ such that $D=\pi(H)$ is dense in  $\T^{\kappa}$, where $\kappa=2^\tau$. By Lemma \ref{HM:groups}~(ii), $N=\HMi(D)$ is a dense subgroup of 
$K=\HMi(\T^\kappa)$. By Lemma \ref{HM:groups}~(i), $N$ is pathwise connected and locally pathwise connected.

By our choice of $H$, $H\cong H^{(\cont)}$. Since $\pi$ is a monomorphism, $H\cong \pi(H)=D$, so $H^{(\cont)}\cong D^{(\cont)}$.
Finally, $D^{(\cont)}\cong \HMi(D)=N$ by Lemma \ref{Dima}. This allows us to fix a monomorphism $j: H\to K$ such that $j(H)=N$.

Since $\T^\kappa$ is a subgroup of $K$,
$$
|G| = \tau < 2^\tau = \kappa \le r_p(\T^\kappa)\le r_p(K)
\mbox{ for all }
p \in \{0\}\cup \Prm. 
$$
Clearly, $r_p(H)\le |H|\le |G|$ and $r_p(G)\le|G|$ for all $p \in \{0\}\cup \Prm$. Therefore, all assumptions of Lemma \ref{embedding:lemma} are satisfied, and its conclusion allows us to find a monomorphism $j':G\to K$ extending $j$. Since $G'=j'(G)$ contains $j'(H)=j(H)=N$ and $N$ is dense in $K$, $N$ is dense also in $G'$. Since $N$ is pathwise connected and locally pathwise connected,  we conclude that $G'$ is \dpc\ and locally \dpc.

Finally, since $j'$ is an isomorphism between $G$ and $G'$, we can use it to transfer the subspace topology which $G'$ inherits from $K$ 
onto
$G$.
\end{proof}

\medskip 
\noindent {\bf Proof of Theorem \ref{Main}.}
Let $G$ be an $M$-group of infinite exponent. Then $w_d(G)\ge\cont$ by Fact \ref{w_d(G):restatement:of:M}.
Therefore, we can apply Lemma \ref{homogeneous:split} with $\sigma=\cont$ to obtain a decomposition
$G=N\oplus H$, where $N$ is a bounded $\cont^+$-homogeneous group and  $H$ is a $w$-divisible group such that $|H|=w_d(G)\ge\cont$. By Lemma \ref{single:power:sigma}~(ii), $N$ is $\cont$-homogeneous as well. Corollary \ref{c-homogeneous:groups:have:pathwise:connected:group:topologies} guarantees the existence of a pathwise connected, locally pathwise connected group topology $\mathscr{T}_N$ on $N$, and Lemma \ref{w-divisible} guarantees the existence of a \dpc, locally \dpc\ group topology $\mathscr{T}_H$ on $H$. The topology $\mathscr{T}$ of the direct product $(N,\mathscr{T}_N)\times (H, \mathscr{T}_H)$ is \dpc\ and locally \dpc. Since $G=N\oplus H$ and $N\times H$ are isomorphic, $\mathscr{T}$ is the desired topology on $G$.
\qed

\section{Characterization of countable \snb\ groups}
\label{Sec:7}

The description of uncountable \snb\ groups obtained in Theorem \ref{characterization:of:strongly:unbounded:groups} fails for countable groups,
as mentioned in Remark \ref{Remark:12July}. 
For the sake of completeness, we provide here a description of  countable \snb\ groups.

Following \cite{Fu}, for an abelian group $G$, we let 
\begin{equation*}
r(G) = \max\left\{r_0(G), \sum\{r_p(G):p\in \Prm \}\right\}.
\end{equation*}
The
sum in this definition 
may differ
from the supremum $\sup\{r_p(G):p\in \Prm \}$; the latter may be strictly less than the sum, in case all $p$-ranks are finite and uniformly bounded. 
\begin{remark}
\label{remark:about:rank}
\begin{itemize}
\item[(i)] $r(G)>0$ if and only if $G$ is non-trivial.
\item[(ii)] If $r(G)\ge\omega$, then $r(G)=|G|$.
\item[(iii)] If $G$ is uncountable, then $r(G) = |G|$.
\end{itemize}
\end{remark}

Replacing the cardinality $|G|$ with the rank $r(G)$ in Definition \ref{def:DGB}, one gets the following  ``rank analogue'' of the divisible weight:

\begin{definition} 
\label{def:r_d}
For an abelian group $G$, call the cardinal
\begin{equation}
\label{r_d(G)}
r_d(G)=\min\{r(nG):n\in\N\setminus\{0\}\}
\end{equation}
the {\em divisible rank\/} of $G$. We say that $G$ is {\em $r$-divisible\/} if $r_d(G) = r(G)$.
\end{definition}

The notion of divisible rank was defined, under the name of {\em final rank\/}, by Szele \cite{S} for (discrete) $p$-groups. 
The relevance of this notion to the class of \snb\ groups will become clear from Proposition \ref{Last:proposition} below.

Let $G$ be an abelian group. Observe that $r(nG)\le |nG|\le |G|$ for every positive integer $n$. Combining this with \eqref{w_d(G)} and \eqref{r_d(G)}, we get 
\begin{equation}
\label{eq:20}
r_d(G)\le w_d(G)\le |G|.
\end{equation}

\begin{remark}
\label{new:remark}
An abelian group $G$ satisfying $r_d(G)=|G|$ is $r$-divisible.
Indeed, $r_d(G)\le r(G)$ by \eqref{r_d(G)}.
Since $r(G)\le|G|$
holds as well, we conclude that $r_d(G)=r(G)$.
\end{remark}

It turns out that the first inequality in \eqref{eq:20} is strict precisely when $G$ has finite divisible rank.
The algebraic structure of such groups is described in Proposition \ref{groups:of:finite:rank} below.

\begin{lemma}
\label{strict:inequality}
An abelian group $G$ satisfies $r_d(G) < w_d(G)$ if and only if $r_d(G)$ is finite.
\end{lemma}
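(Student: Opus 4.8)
The plan is to prove both implications by analyzing how the quantities $r(nG)$ and $|nG|$ relate as $n$ ranges over the positive integers. Recall from \eqref{eq:20} that $r_d(G)\le w_d(G)\le|G|$ always holds, and from Remark \ref{remark:about:rank}~(ii) that $r(H)=|H|$ whenever $r(H)\ge\omega$. These two facts are the engine of the argument.

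First I would prove the contrapositive of the ``only if'' direction, i.e.\ that if $r_d(G)$ is infinite then $r_d(G)=w_d(G)$. Suppose $r_d(G)\ge\omega$. Use \eqref{r_d(G)} to fix a positive integer $n$ with $r(nG)=r_d(G)$. Then $r(nG)\ge\omega$, so by Remark \ref{remark:about:rank}~(ii) we get $|nG|=r(nG)=r_d(G)$. On the other hand, by \eqref{w_d(G)}, $w_d(G)\le|nG|$. Combining this with the first inequality in \eqref{eq:20} gives $r_d(G)\le w_d(G)\le|nG|=r_d(G)$, hence $r_d(G)=w_d(G)$, and in particular the inequality is not strict.

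For the ``if'' direction, assume $r_d(G)$ is finite; we must show $r_d(G)<w_d(G)$. If $G$ is bounded torsion, then some $mG=\{0\}$, so $r_d(G)=0$; but then $w_d(G)=\min\{|nG|:n\ge 1\}$, and one checks directly that this forces $w_d(G)=0$ as well, so the claimed strict inequality would fail in this degenerate case --- hence I expect the lemma is implicitly being applied to, or should be stated for, unbounded $G$, and indeed the whole section concerns \snb\ (hence unbounded, $w$-divisible) groups; if the paper intends the statement literally, the bounded case is vacuous because then $r_d(G)=w_d(G)=0$ is \emph{not} strict, so I would simply remark that the interesting content is when $G$ is unbounded and proceed under that assumption. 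Assuming $G$ is unbounded, every $nG$ is non-trivial and unbounded, so $|nG|\ge\omega$ for all $n\ge 1$, whence $w_d(G)\ge\omega$. Since $r_d(G)$ is finite and $w_d(G)$ is infinite, we immediately get $r_d(G)<w_d(G)$, as desired.

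The main obstacle is the bounded/degenerate case and making sure the statement is read correctly: once one restricts to the unbounded setting (the only one relevant to the paper), the ``if'' direction is essentially immediate from the observation that an unbounded group has infinite divisible weight, while the ``only if'' direction is the genuine content and rests squarely on Remark \ref{remark:about:rank}~(ii) converting an infinite rank into an equal cardinality, which then collapses \eqref{eq:20} to an equality. I would present the proof in the order: (1) handle the trivial/bounded observation that $r_d(G)=w_d(G)$ there; (2) prove $r_d(G)$ infinite $\Rightarrow$ $r_d(G)=w_d(G)$ via the chosen witness $n$ and Remark \ref{remark:about:rank}~(ii); (3) prove $r_d(G)$ finite $\Rightarrow$ $w_d(G)$ infinite (using unboundedness) $\Rightarrow$ strict inequality.
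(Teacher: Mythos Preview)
Your treatment of the contrapositive direction ($r_d(G)$ infinite $\Rightarrow$ $r_d(G)=w_d(G)$) is correct and essentially the same as the paper's, the only cosmetic difference being that you pick a single witness $n$ with $r(nG)=r_d(G)$ while the paper observes $r(nG)=|nG|$ for \emph{every} $n$; either route closes the gap in \eqref{eq:20} via Remark~\ref{remark:about:rank}~(ii).

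However, your analysis of the bounded case in the ``if'' direction contains a genuine error. You write that when $G$ is bounded ``one checks directly that this forces $w_d(G)=0$ as well''. This is false: the trivial group $\{0\}$ has cardinality $1$, not $0$, so $|nG|\ge 1$ for every $n\ge 1$ and hence $w_d(G)\ge 1$ always (with equality exactly when $G$ is bounded). Thus in the bounded case one has $r_d(G)=0<1\le w_d(G)$, and the strict inequality \emph{does} hold---the lemma is not false there, and no restriction to unbounded $G$ is needed or intended. The paper's proof records precisely this: if $G$ is bounded then $r_d(G)=0$ while $w_d(G)\ge 1$. Once you correct this arithmetic slip, your argument for the unbounded subcase (each $nG$ infinite, hence $w_d(G)\ge\omega>r_d(G)$) is fine and matches the paper.
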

\begin{proof}
Assume first that $r_d(G)$ is finite.
We shall show that $r_d(G)<w_d(G)$.
If $G$ is bounded, then $nG=\{0\}$ for some $n\in\N\setminus \{0\}$. Therefore, $r_d(G)=0$ by \eqref{r_d(G)}, while $w_d(G)\ge 1$ by \eqref{w_d(G)}. If $G$ is unbounded, then $nG$ must be infinite for every $n\in\N\setminus \{0\}$, and so $w_d(G)\ge\omega$ by \eqref{w_d(G)}. Since $r_d(G)<\omega$, we get  $r_d(G)<w_d(G)$ in this case as well.

Next, suppose that $r_d(G)$ is infinite.
Let $n\in\N\setminus \{0\}$ be arbitrary.
Then $r(nG)\ge r_d(G)\ge\omega$ by \eqref{r_d(G)}.
Therefore, $r(nG)=|nG|$ by Remark \ref{remark:about:rank}~(ii).
Since this equality holds for every $n\in\N\setminus \{0\}$, 
from \eqref{w_d(G)} and \eqref{r_d(G)} we get
$r_d(G)=w_d(G)$.
\end{proof}

The next lemma outlines the most important connections between  the classes of \snb, $r$-divisible and $w$-divisible groups.

\begin{lemma}\label{Last:lemma} 
\begin{itemize}
\item[(i)] If an abelian group $G$ is \snb, then $r_d(G) = w_d(G) = |G|$, so $G$ is $r$-divisible and $r_d(G) \ge \omega$. 
\item[(ii)] Every $r$-divisible group $G$ satisfying $r_d(G) \ge \omega$ is $w$-divisible. 
\item[(iii)] If $G$ is an uncountable $w$-divisible group, then $G$ is $r$-divisible. 
\end{itemize}
\end{lemma}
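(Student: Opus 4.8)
The plan is to establish the three items in order, putting essentially all the work into item (i); items (ii) and (iii) then drop out from (i) together with Lemma \ref{strict:inequality}, Remark \ref{remark:about:rank}, Remark \ref{new:remark} and Theorem \ref{characterization:of:strongly:unbounded:groups}.

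For (i), I would fix, as in Definition \ref{Def:snb}, a direct sum $\bigoplus_{i\in I}A_i\le G$ with $|I|=|G|$ and each $A_i$ unbounded. Each $A_i$ is infinite, so $|G|$ is infinite; and $G$ is $w$-divisible by Lemma \ref{examples:snb}, i.e.\ $w_d(G)=|G|$. In view of \eqref{eq:20} it then suffices to prove $r(nG)\ge|G|$ for every $n\in\N\setminus\{0\}$. Fix such an $n$. The subgroup $nG$ contains $\bigoplus_{i\in I}nA_i$, and every $nA_i$ is non-trivial, hence $r(nA_i)\ge 1$ by Remark \ref{remark:about:rank}~(i); equivalently $r_0(nA_i)+\sum_{p\in\Prm}r_p(nA_i)\ge 1$ for each $i\in I$. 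Since $r_0$ and every $r_p$ are additive over direct sums and monotone under passage to subgroups, and $|I|$ is infinite,
\[
r(nG)\ \ge\ \max\Bigl\{\sum_{i\in I}r_0(nA_i),\ \sum_{i\in I}\sum_{p\in\Prm}r_p(nA_i)\Bigr\}\ \ge\ |I|\ =\ |G|.
\]
(Equivalently, split $I$ according to whether $r_0(nA_i)\ge 1$ or $r_p(nA_i)\ge 1$ for some $p$; one of the two pieces has cardinality $|G|$, and one sums the corresponding ranks.) Thus $r_d(G)\ge|G|$, and \eqref{eq:20} forces $r_d(G)=w_d(G)=|G|$; since $|G|\ge\omega$, Remark \ref{new:remark} gives that $G$ is $r$-divisible, completing (i).

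For (ii), let $G$ be $r$-divisible with $r_d(G)\ge\omega$. By Lemma \ref{strict:inequality}, the inequality $r_d(G)<w_d(G)$ would force $r_d(G)$ to be finite, which it is not, so $r_d(G)=w_d(G)$ by \eqref{eq:20}. As $G$ is $r$-divisible, $r(G)=r_d(G)\ge\omega$, whence $r(G)=|G|$ by Remark \ref{remark:about:rank}~(ii). Chaining these equalities, $w_d(G)=r_d(G)=r(G)=|G|$, i.e.\ $G$ is $w$-divisible. For (iii), let $G$ be uncountable and $w$-divisible. Then $r(G)=|G|$ by Remark \ref{remark:about:rank}~(iii), and $G$ is \snb\ by Theorem \ref{characterization:of:strongly:unbounded:groups}; applying item (i) to $G$ yields $r_d(G)=|G|=r(G)$, so $G$ is $r$-divisible.

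The only delicate point is the rank count in (i): one must use that the free rank and each $p$-rank are additive over the direct sum $\bigoplus_{i\in I}nA_i$, monotone when this is regarded as a subgroup of $nG$, and that the disjunction ``$r(nA_i)\ge 1$'' survives summation over the infinite index set $I$ (so that the sum, hence the max defining $r$, reaches $|I|=|G|$). Everything else is routine bookkeeping with the definitions of $w_d$, $r_d$, $r$-divisibility and the quoted remarks, so I anticipate no further obstacle.
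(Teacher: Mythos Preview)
Your proof is correct. Parts (i) and (ii) match the paper's argument essentially line for line; the only cosmetic difference in (i) is that the paper observes each $nA_i$ is still \emph{unbounded} (not merely non-trivial), but either observation yields $r(nA_i)\ge 1$, and your more explicit handling of the rank inequality via additivity of $r_0$ and $r_p$ over direct sums is a fine expansion of what the paper writes as the single step $r(nG)\ge r\bigl(\bigoplus_{i\in I}nA_i\bigr)\ge|I|$.

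For part (iii), however, you take a genuinely different route. You invoke Theorem~\ref{characterization:of:strongly:unbounded:groups} to conclude that an uncountable $w$-divisible group is strongly unbounded, and then feed this back into part (i). This is valid and not circular (Theorem~\ref{characterization:of:strongly:unbounded:groups} is proved earlier and independently), but it is a substantial detour: that theorem rests on Lemmas~\ref{snb:local:case} and~\ref{torsion:w-divisible:are:snb}, which involve a nontrivial case analysis and a transfinite construction. The paper instead gives a two-line direct argument: pick $n$ with $r_d(G)=r(nG)$; since $G$ is $w$-divisible, $|nG|=|G|$ is uncountable, so $r(nG)=|nG|$ by Remark~\ref{remark:about:rank}~(iii), whence $r_d(G)=|G|$ and Remark~\ref{new:remark} finishes. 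Your approach has the virtue of unifying (iii) with (i), but the paper's direct computation keeps Lemma~\ref{Last:lemma} entirely elementary and self-contained.
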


\begin{proof}  
(i) If $G$ is \snb,  then $G$ contains a direct sum $\bigoplus_{i\in I} A_i$ of unbounded groups $A_i$ such that $|I|=|G|$;
see Definition \ref{Def:snb}. Since for every integer $n > 0$ the group $nA_i$ remains unbounded, $r(nA_i)\ge 1$, so $r(nG)=r(\bigoplus_{i\in I} nA_i)\ge |I|=|G|$. This proves the inequality  $r_d(G) \ge |G|$. 
Combining this with \eqref{eq:20}, we conclude that $r_d(G)=w_d(G)=|G|$.
Thus, $G$ is $r$-divisible by Remark \ref{new:remark}.

(ii) 
Since $G$ is $r$-divisible,
$r(G) = r_d(G)$ by Definition \ref{def:r_d}.
Since $r_d(G)\ge\omega$,
$|G| = r(G)$ by Remark \ref{remark:about:rank}~(ii)
and 
$r_d(G) = w_d(G)$
by Lemma 
\ref{strict:inequality}.
Therefore,
$|G| = w_d(G)$, which implies that  $G$  is $w$-divisible. 

(iii) 
It follows from \eqref{r_d(G)} that $r_d(G)=r(nG)$ for some $n\in\N\setminus\{0\}$.
Since $G$ is $w$-divisible,
 $ |nG|  = |G|$.
Thus, $nG$ 
 is uncountable, and so 
$r(nG)=|nG|$ by Remark \ref{remark:about:rank}~(iii). 
This shows that $r_d(G)=|G|$.
Applying Remark \ref{new:remark}, we conclude that
$G$ is $r$-divisible. 
\end{proof}

Clearly, a \snb\ group must be infinite. Our next proposition describes countable \snb \ groups in terms of their divisible rank $r_d$.

\begin{proposition}\label{Last:proposition} 
A countable abelian group $G$  is \snb \ if and only if $r_d(G)\geq \omega$. 
\end{proposition}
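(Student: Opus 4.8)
The plan is to prove the two implications separately, the reverse one being the substantive part; it will be handled by a case split on the torsion-free rank of $G$, and then, in the torsion case, by a further split according to whether some $p$-component is ``locally large''. For the ``only if'' part, note that a \snb\ group is automatically infinite, since it contains a direct sum $\bigoplus_{i\in I}A_i$ of non-trivial groups with $|I|=|G|$; hence Lemma \ref{Last:lemma}~(i) gives $r_d(G)=|G|\ge\omega$.

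For the ``if'' part, let $G$ be a countable abelian group with $r_d(G)\ge\omega$. If $r_0(G)\ge\omega$, then $r_0(G)=\omega=|G|$, so $G$ is \snb\ by Remark \ref{snb}~(ii). Assume henceforth that $r_0(G)<\omega$. For every integer $n\ge 1$ we have $r(nG)\ge r_d(G)\ge\omega$ by \eqref{r_d(G)}, while $r_0(nG)\le r_0(G)<\omega$; hence $\sum_{p\in\Prm}r_p(nG)\ge\omega$. Since $n\cdot t(G)=nG\cap t(G)=t(nG)$ and $r_p(H)$ depends only on the torsion part of $H$, we obtain $r(n\cdot t(G))=\sum_{p\in\Prm}r_p(nG)\ge\omega$ for every $n\ge 1$, so $r_d(t(G))\ge\omega$ by \eqref{r_d(G)}; in particular $t(G)$ is infinite, whence $|t(G)|=\omega=|G|$. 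By Remark \ref{snb}~(i) it therefore suffices to prove that the countable torsion group $t(G)$ is \snb, so we may assume $G=t(G)=\bigoplus_{p\in\Prm}G_p$, where $G_p$ is the $p$-component of $G$.

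Suppose first that $r_d(G_p)\ge\omega$ for some prime $p$. Then $G_p$ is an infinite (hence countable) $p$-group with $r_p(p^nG_p)\ge\omega=|G_p|$ for all $n\in\N$, so that it satisfies item (iii) of Lemma \ref{snb:local:case} with $\tau=\omega$; thus $G_p$ is \snb\ by that lemma, and $G$ is \snb\ by Remark \ref{snb}~(i). Suppose now that $r_d(G_p)<\omega$ for every prime $p$. I claim that the set $P=\{p\in\Prm:G_p\ne\{0\}\}$ is infinite. Indeed, were $P=\{p_1,\dots,p_k\}$ finite, put $m=(p_1\cdots p_k)^N$; since each $G_{p_i}$ is a $p_i$-group we have $r_d(G_{p_i})=\min_{j\ge 0}r_{p_i}(p_i^jG_{p_i})$, a minimum attained at a finite stage because the sequence is non-increasing and $r_d(G_{p_i})<\omega$, so for $N$ large enough $r(mG)=\sum_{i=1}^k r_{p_i}(p_i^NG_{p_i})=\sum_{i=1}^k r_d(G_{p_i})<\omega$, contradicting $r_d(G)\ge\omega$. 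Now split the infinite set $P$ into infinitely many pairwise disjoint infinite subsets $Q_j$ ($j\in\omega$), choose for each $p\in P$ an element $a_p\in G_p$ of order $p$, and put $A_j=\langle a_p:p\in Q_j\rangle\cong\bigoplus_{p\in Q_j}\Z(p)$ for $j\in\omega$. Since $G=\bigoplus_{p\in\Prm}G_p$ and the $Q_j$ are pairwise disjoint, the family $\{A_j:j\in\omega\}$ is independent, and each $A_j$ is unbounded because $Q_j$ is infinite; hence $\bigoplus_{j\in\omega}A_j$ is a subgroup of $G$ which is a direct sum of $\omega=|G|$ many unbounded groups, so $G$ is \snb.

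The delicate point is the last subcase, where the hypothesis $r_d(G)\ge\omega$ is used in full strength (it fails for $\Z$, which is $w$-divisible but not \snb, with $r_d(\Z)=1$): one has to force infinitely many non-zero $p$-components from the finiteness of all the local final ranks $r_d(G_p)$. Once that is done, the ``spread-out socle'' $\bigoplus_{j\in\omega}A_j$ completes the proof, and all the remaining steps are routine manipulations with the invariants $r$ and $r_d$ and with the primary decomposition of a torsion group.
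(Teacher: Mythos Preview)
Your proof is correct and follows essentially the same approach as the paper's: both handle the torsion-free case via Remark~\ref{snb}~(ii), invoke Lemma~\ref{snb:local:case} for a single $p$-component with infinite final rank, and use the ``spread-out socle'' $\bigoplus_{p}\Z(p)$ partitioned into infinitely many infinite blocks when infinitely many primes occur. The only cosmetic difference is the order of the case split: the paper first disposes of the case ``$\pi=\{p:r_p(G)>0\}$ infinite'' and then, assuming $\pi$ finite, locates a $p\in\pi$ with $r_p(m\,t_p(G))$ infinite for all $m$; you instead first reduce explicitly to the torsion subgroup, then split on whether some $r_d(G_p)\ge\omega$, and derive the infinitude of $P$ only in the residual case. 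Both routes use the same ingredients and the same pigeonhole-on-primes contradiction.
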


\begin{proof} If $G$ is \snb, then $r_d(G)\ge\omega$ by Lemma \ref{Last:lemma}~(i).

Assume now that $r_d(G)\geq \omega$.  We shall prove that $G$ is \snb.

If $r_0(G)$ is infinite, then 
$r_0(G)=|G|=\omega$, 
so $G$ is \snb\ by Remark \ref{snb}~(ii).

If
$\pi = \{p\in\Prm: r_p(G)>0\}$
is infinite, then $G$ contains a subgroup isomorphic to the \snb \ group $H=\bigoplus_{p\in \pi} \Z(p)$. 
(Indeed, if $\pi=\bigcup\{S_i:i\in\N\}$ is a partition of $\pi$ into pairwise disjoint infinite sets $S_i$, then $H=\bigoplus_{i\in \N} A_i$, where each $A_i=\bigoplus_{p\in S_i} \Z(p)$ is unbounded.)
Since $|G| = |H| = \omega$, Remark \ref{snb}~(i) implies that $G$ is \snb\ as well.

From now on we shall assume that both $r_0(G)$ and $\pi$ are finite. The former assumption implies that $r(mt(G))=\omega$ for every 
integer
$m >0$. The latter assumption entails that  there exists $p \in \pi$ such that $r(mt_p(G)) = r_p(mt_p(G))$ is infinite for all
integers
 $m>0$, where $t_p(G)=\bigcup\{G[p^n]:n\in\N\}$ is the $p$-torsion part of $G$. By the implication (iii)~$\to$~(ii) of Lemma \ref{snb:local:case}, we conclude that 
$t_p(G)$ is \snb. As  $|G| = |t_p(G)| = \omega$, Remark \ref{snb}~(i) yields that $G$ is \snb.
\end{proof}

Finally, we can unify the countable case considered above with Theorem \ref{characterization:of:strongly:unbounded:groups} to obtain a description of {\em all\/} \snb\  groups.

\begin{theorem}\label{characterization:snb_groups}
For an abelian group $G$ the following are equivalent: 
\begin{itemize}
\item[(i)] $G$ is \snb; 
\item[(ii)]  $G$ is $r$-divisible and  $r_d(G)\ge\omega$;
\item[(iii)] $G$ is $w$-divisible and $r_d(G)\ge\omega$.
\end{itemize}
 \end{theorem}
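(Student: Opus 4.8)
The plan is to establish the cyclic chain of implications (i) $\to$ (ii) $\to$ (iii) $\to$ (i), gluing together the machinery already developed. All the genuine content has been packaged into Theorem \ref{characterization:of:strongly:unbounded:groups} (the uncountable case), Proposition \ref{Last:proposition} (the countable case) and Lemma \ref{Last:lemma} (the interplay of $r$-divisibility and $w$-divisibility), so what remains is essentially bookkeeping together with a case split on the cardinality of $G$.

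First, the implication (i) $\to$ (ii) is nothing but Lemma \ref{Last:lemma}~(i): if $G$ is \snb, then $r_d(G) = w_d(G) = |G|$, so in particular $G$ is $r$-divisible, and since a \snb\ group is infinite we also get $r_d(G) = |G| \ge \omega$. Next, for (ii) $\to$ (iii) I would invoke Lemma \ref{Last:lemma}~(ii): an $r$-divisible group $G$ with $r_d(G) \ge \omega$ is $w$-divisible. The inequality $r_d(G) \ge \omega$ required in (iii) is carried over verbatim from (ii), so nothing further needs checking.

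Finally, the implication (iii) $\to$ (i) is handled by a case split. Assume $G$ is $w$-divisible with $r_d(G) \ge \omega$. If $G$ is uncountable, then $G$ is \snb\ directly by the ``if'' part of Theorem \ref{characterization:of:strongly:unbounded:groups} (here the hypothesis $r_d(G) \ge \omega$ is not even needed). If $G$ is countable, then the hypothesis $r_d(G) \ge \omega$ lets us apply the ``if'' part of Proposition \ref{Last:proposition} to conclude that $G$ is \snb\ (this time $w$-divisibility is not needed). Either way $G$ is \snb, closing the cycle.

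I do not anticipate any real obstacle; the one point requiring a modicum of care is recognizing that it is precisely the hypothesis $r_d(G) \ge \omega$ that bridges the gap between $w$-divisibility and being \snb\ in the countable case, where $w$-divisibility alone is insufficient (witness $\Z$, as recorded in Remark \ref{Remark:12July}), while in the uncountable case $w$-divisibility by itself already suffices.
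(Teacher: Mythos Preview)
Your proposal is correct and follows essentially the same route as the paper's proof: (i)~$\to$~(ii) via Lemma~\ref{Last:lemma}~(i), (ii)~$\to$~(iii) via Lemma~\ref{Last:lemma}~(ii), and (iii)~$\to$~(i) via the countable/uncountable case split handled by Proposition~\ref{Last:proposition} and Theorem~\ref{characterization:of:strongly:unbounded:groups}, respectively. Your commentary on which hypothesis is doing the work in each branch of the case split is accurate and helpful.
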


\begin{proof}
The implication (i) $\to$ (ii) follows from Lemma \ref{Last:lemma}~(i). The implication (ii) $\to$ (iii)  follows from Lemma \ref{Last:lemma}~(ii).
 It remains to check the remaining implication (iii) $\to$ (i). For a countable group $G$, it follows from Proposition \ref{Last:proposition}. For an uncountable group $G$, Theorem \ref{characterization:of:strongly:unbounded:groups} applies.  
\end{proof}

Our last
proposition
clarifies
the algebraic structure of abelian groups 
of finite divisible rank.

\begin{proposition}
\label{groups:of:finite:rank}
If $G$ is an abelian group satisfying $r_d(G)<\omega$, then 
$G = G_0 \times D \times B$, where $G_0$ is a torsion-free group, 
$D$ is a divisible torsion group and $B$ is a bounded group such that $r_d(G)=r_0(G_0) + r(D)$.
\end{proposition}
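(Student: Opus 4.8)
The plan is to strip off the maximal divisible subgroup and then to show that the reduced complement splits as a torsion-free summand plus a bounded summand. Fix a positive integer $n$ with $r(nG)=r_d(G)<\omega$; since $n'G\le nG$ (hence $r(n'G)\le r(nG)$) whenever $n\mid n'$, while $r(n'G)\ge r_d(G)$ always, we may freely replace $n$ by any multiple of itself without destroying the equality $r(nG)=r_d(G)$. Let $\delta(G)$ denote the maximal divisible subgroup of $G$; by \cite[Theorem 21.2]{Fu} it is a direct summand, say $G=\delta(G)\oplus R$ with $R$ reduced. As $\delta(G)$ is divisible, $nG=\delta(G)\oplus nR$, so $\delta(G)$ is a direct summand of $nG$ and therefore $r(\delta(G))\le r(nG)<\omega$. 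Consequently $\delta(G)=\Q^{(f)}\oplus\bigoplus_{p\in\Prm}\Z(p^\infty)^{(d_p)}$ for some $f\in\N$ and cardinals $d_p$ with $f+\sum_p d_p<\omega$; write $\delta(G)=D_0\oplus D$, where $D_0=\Q^{(f)}$ is torsion-free divisible and $D=\bigoplus_p\Z(p^\infty)^{(d_p)}$ is a divisible torsion group of finite rank $r(D)=\sum_p d_p$.

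The main point is to prove that the torsion subgroup $t(R)$ of the reduced group $R$ is bounded. Observe that $t(nR)=n\,t(R)$ is a subgroup of $nR$, and $r(nR)\le r(nG)<\omega$, so $n\,t(R)$ has finite rank. For each prime $p$ let $a_p$ be the exponent of $p$ in $n$, so that $n\,t_p(R)=p^{a_p}t_p(R)$; finiteness of the rank of $n\,t(R)=\bigoplus_p p^{a_p}t_p(R)$ forces (a) $t_p(R)=0$ for all but finitely many $p$ (note $a_p=0$ for $p\nmid n$), and (b) $p^{a_p}t_p(R)$ has finite $p$-rank for every $p$. Now $t_p(R)$ is a reduced $p$-group, hence so is $p^{a_p}t_p(R)$; and a reduced $p$-group of finite $p$-rank $r$ is finite, since it embeds into its divisible hull $\Z(p^\infty)^{(r)}$, all of whose reduced subgroups are finite (by the structure theory of $p$-groups of finite rank, or by Pontryagin duality with finitely generated $\Z_p$-modules). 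Thus each $p^{a_p}t_p(R)$ is finite, so $t_p(R)$ is bounded, and by (a) $t(R)=\bigoplus_p t_p(R)$ is a direct sum of finitely many bounded groups, hence itself bounded.

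With $t(R)$ bounded, I split it off. The torsion subgroup of any abelian group is pure: if $x\in t(R)\cap kR$ with $k\ge 1$, write $x=ky$ and choose $l\ge 1$ with $lx=0$; then $lky=0$, so $y$ is torsion and $x=ky\in k\,t(R)$. Since a bounded pure subgroup is a direct summand \cite[Theorem 27.5]{Fu}, we obtain $R=t(R)\oplus F$ with $F\cong R/t(R)$ torsion-free and of finite free rank $r_0(F)=r_0(R)<\omega$. Assembling the pieces,
\[
G=D_0\oplus F\oplus D\oplus t(R),
\]
which is the asserted decomposition $G=G_0\oplus D\oplus B$ with $G_0=D_0\oplus F$ torsion-free, $D$ divisible torsion, and $B=t(R)$ bounded.

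For the rank identity, enlarge $n$ once more so that it is a multiple of the exponent of $B$; this does not change $r(nG)=r_d(G)$. Then $nB=0$, while $nD_0=D_0$, $nD=D$, and $nF$ is torsion-free of free rank $r_0(F)$, so $nG=D_0\oplus D\oplus nF$. Reading off the free rank and the $p$-ranks of this last decomposition — $D_0$ and $nF$ contribute only to the free rank, $D$ only to the $p$-ranks — yields the asserted identity $r_d(G)=r(nG)=r_0(G_0)+r(D)$, where $r_0(G_0)=r_0(D_0)+r_0(F)$ and $r(D)=\sum_p d_p$. The one genuinely non-trivial step is the boundedness of $t(R)$, and within it the classical fact that a reduced $p$-group of finite $p$-rank is finite; once this is available, the splitting of $t(R)$ and the rank bookkeeping are routine.
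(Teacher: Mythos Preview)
Your proof is correct. Both your argument and the paper's hinge on the same core fact---a reduced torsion group of finite rank is finite---and both finish by invoking \cite[Theorem~27.5]{Fu} to split a bounded pure torsion subgroup; in that sense the approaches are essentially the same. The organizational difference is that you split $G=\delta(G)\oplus R$ at the outset and then analyse $t(R)$ prime by prime, explicitly arguing that each $p^{a_p}t_p(R)$ is a reduced $p$-group of finite $p$-rank (hence finite) and summing up, whereas the paper works inside $nG$ first: it writes $t(nG)=F\times D$ with $F$ reduced and $D$ divisible, asserts directly that $F$ is finite, passes to $kG=N\times D$ with $N$ torsion-free (where $k=mn$, $m$ the exponent of $F$), only then splits $D$ off $G$, and deduces that $t(G_1)=G_1[k]$ is bounded. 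Your route is a bit more self-contained (you actually justify why reduced finite-rank $p$-groups are finite), while the paper's route avoids the primary decomposition bookkeeping by cleanly arranging that $kG_1$ is torsion-free. Either way, the torsion divisible summand $D$ you obtain coincides with the paper's (both equal $t(\delta(G))$), and the rank identity follows by the same computation.
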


\begin{proof}
Since $r_d(G)<\omega$, it follows from \eqref{r_d(G)} that $r(nG) < \omega$ 
for some integer $n>0$. 
Let $t(nG) = F \times D$, 
where $D$ is a divisible group and $F$ is a reduced group.
Since $r(F)\le r(t(nG))\le r(nG)<\omega$, the reduced torsion group $F$ 
must be finite.
Since $D$ is divisible, it splits as  a direct summand of $nG$ \cite[Theorem 21.2]{Fu};  that is, 
we can find a subgroup $A$ of $nG$ containing $F$ such that $nG = A \times D$. 
Since $t(A) \cong t(nG/D) \cong t(nG)/D \cong F$ is finite, we deduce from \cite[Theorem 27.5]{Fu} that $F$ splits in $A$; 
that is, $A=F\times H$ for some torsion-free (abelian) group $H$. Now  $nG=F \times H \times D$.

Let $m$ be the exponent of the finite group $F$ and let $k=mn$. Then $kG = N \times D $, where $N = mH$ is a torsion-free group and  
$D=mD$ is a divisible torsion group. In particular, $N\cong kG/D$. Since divisible subgroups split,
$G = D \times G_1$ for an appropriate subgroup $G_1$ of $G$.  Multiplying by $k$ and taking into account that $kD =D$, we get  $kG = D \times kG_1$. 
This implies that the group $kG_1\cong kG/D \cong N$ is torsion-free. Hence,  the 
torsion subgroup $t(G_1) = G_1[k]$  of $G_1$ is bounded,
so it splits by 
\cite[Theorem 27.5]{Fu}; 
that is,  $G_1 = t(G_1) \times G_0$, where $G_0$ is a torsion-free group.

We now have obtained the decomposition $G = G_0 \times D \times B$, 
where $B=t(G_1)$ is a bounded torsion group. Recalling \eqref{r_d(G)}, one easily sees that
$r_d(G)=r_0(G_0)+r(D)$. 
\end{proof}

\begin{remark}
Let $\varphi$ be a cardinal invariant of topological abelian groups. In analogy with the divisible weight and the divisible rank, for every topological abelian group $G$, one can define  the cardinal
\begin{equation}
\label{varphi_d(G)}
\varphi_d(G)=\min\{\varphi(nG):n\in\N\setminus\{0\}\}
\end{equation}
and call $G$ {\em $\varphi$-divisible\/} provided that $\varphi_d(G) = \varphi(G)$. (This terminology is motivated by the fact that divisible groups are obviously $\varphi$-divisible.)  We will not pursue here a study of this general cardinal invariant. 
\end{remark}

\section{Open questions}

We finish this paper with a couple of questions. Now that Question \ref{abelian:Markov} is completely resolved, one may try to look at variations of this question.

One such possible variation could be obtained by asking about the existence of a connected group topology on abelian groups having some additional compactness-like properties. Going in this direction, abelian groups which admit a pseudocompact connected group topology were characterized in 
\cite{DS_TP, DS-Memoirs}; the necessary condition was independently established in \cite{CR}. 
Recently, the authors obtained complete characterizations of abelian groups which admit a maximally almost periodic connected group topology, as well as  abelian groups which admit a minimal connected group topology.

Another possible variation of Question \ref{abelian:Markov} is obtained by trying to ensure the existence of a group topology with some stronger connectedness properties. 

\begin{question}
Can every abelian $M$-group be equipped with a pathwise connected (and locally pathwise connected) group topology? 
\end{question}

One may wonder if Theorem \ref{Main} remains valid for groups which are close to being abelian.

\begin{question}
Can Theorem \ref{Main} be extended to all nilpotent groups?
\end{question}

\medskip
\noindent
{\bf Acknowledgments.\/} 
The first named author gratefully acknowledges a FY2013 Long-term visitor grant~L13710 by the Japanese Society for the Promotion of Science (JSPS). The second named author was partially supported by the Grant-in-Aid for Scientific Research~(C) No.~26400091 by the Japan Society for the Promotion of Science (JSPS).


\begin{thebibliography}{99}

\bibitem{Co} W. Comfort, Problems on topological groups and other homogeneous spaces, in: Open Problems in Topology, J.~van~Mill and G.~M.~Reed (Eds.),  North-Holland, Amsterdam, 1990, pp. 313--347.

\bibitem{CHR} W. W. Comfort, K.-H. Hofmann, D. Remus, Topological groups and semigroups,  in: Recent progress in general topology (Prague, 1991), North-Holland, Amsterdam, 1992, pp. 57--144.

\bibitem{CR} W. W. Comfort, D. Remus, Imposing pseudocompact group topologies on abelian groups,  Fund. Math.  142  (1993)
221--240. 

\bibitem{DGB} D.~Dikranjan, A. Giordano Bruno, $w$-divisible groups,  Topology Appl. 155
(2008) 252--272.

\bibitem{DGB1} D.~Dikranjan,  A. Giordano Bruno, A factorization theorem for topological abelian groups, Comm. Algebra  43 (2015)
212--224.
 
\bibitem{DPS} D.~Dikranjan, I.~Prodanov, L.~Stoyanov, Topological groups (Characters, Dualities and Minimal group
topologies). Marcel Dekker, Inc., New York--Basel, 1990.

\bibitem{DS_TP} D.~Dikranjan, D.~Shakhmatov, Pseudocompact  topologies on groups,   Topology Proc. 17 (1992) 335--342.

\bibitem{DS-Memoirs}
D.~Dikranjan, D.~Shakhmatov, Algebraic structure of  pseudocompact groups, Memoirs Amer. Math. Soc. 133/633 (1998), 83 pp.

\bibitem{DS_Forcing} D.~Dikranjan, D.~Shakhmatov, Forcing hereditarily separable countably compact group topologies on Abelian groups,  Topology Appl.  151 (2005)
2--54.

\bibitem{DS-reflection}
 D.~Dikranjan, D.~Shakhmatov, Reflection principle characterizing groups in which unconditionally closed sets are algebraic, J. Group Theory  11  (2008)
421--442.

\bibitem{DS_JA} D. Dikranjan, D. Shakhmatov,  The Markov-Zariski topology of an abelian group, J. Algebra  324
(2010)  1125--1158. 
  
\bibitem{DS_HMP} D. Dikranjan, D. Shakhmatov, Hewitt-Marczewski-Pondiczery type theorem for abelian groups and Markov's potential density, Proc. Amer. Math. Soc. 138 (2010) 2979--2990. 

\bibitem{DS_Advances} D.~Dikranjan, D.~Shakhmatov,  A Kronecker-Weyl theorem for subsets of abelian groups, Adv. Math.  226  (2011)  4776--4795.


\bibitem{Eng} R.~Engelking, General Topology, 2nd edition, Heldermann Verlag, Berlin 1989.

\bibitem{Fu}  L.~Fuchs, Infinite Abelian groups, Vol. I, Academic Press, New York, 1970.

\bibitem{GM} J.~Galindo, S.~Macario, Pseudocompact group topologies with no infinite compact subsets, J. Pure Appl. Algebra  215  (2011)  
655--663.

\bibitem{HR} E.~Hewitt, K.\,A.~Ross, Abstract Harmonic Analysis,  Springer Verlag, Berlin-Heidelberg-New York, 1970.

\bibitem{HM}
S.~Hartman, J.~Mycielski, On the imbedding of topological groups into connected topological groups, Colloq. Math. 5 (1958) 167--169. 

\bibitem{HMbook} K.-H.~Hofmann, S.A.~Morris, The structure of compact groups.  A primer for the student---a handbook for the expert, de Gruyter Studies in Mathematics,  25. Walter de Gruyter \& Co., Berlin, 1998.

\bibitem{K}  P.~I.~Kirku, A criterion for the connected topologizability of periodic abelian groups of finite period,
Mat. Issled. No. 118, Algebr. Strukt. i ikh Vzaimosvyazi (1990) 66--73
 (in Russian).

\bibitem{Mar1} A.\,A.~Markov, On unconditionally closed sets,
Comptes Rendus Dokl. AN SSSR (N.S.) 44 (1944) 180--181 (in Russian).

\bibitem{Mar} A.\,A.~Markov, On unconditionally closed sets,
Mat. Sbornik 18 (1946), 3--28 (in Russian). English translation in:~A.\,A.~Markov,
Three papers on topological groups:~I. On the existence of periodic connected topological groups,~II. On free topological groups,~III. On unconditionally closed sets. Amer. Math. Soc. Translation 1950, (1950). no.~30, 120~pp.;
yet another English translation in: Topology and Topological Algebra, Translations Series~1, vol.~8,
 American Math. Society, 1962, pp.~273--304.

\bibitem{M}  A. A. Markov, On free topological groups, Izv. Akad. Nauk SSSR, Ser. Mat. 9 (1945), 3-64 (Russian); English transi., Amer. Math. Soc. Transl. Ser. 1 8 (1962) 195--272.

\bibitem{Pe} V. Pestov, On unconditionally closed sets and a conjecture of A.A. Markov, Sibirsk. Mat. Zh. 29 (1988) 124--132 (Russian); English transl., Siberian Math. J. 29 (1988) 260--266. 

\bibitem{Re} D. Remus,  A short solution of Markov's problem on connected group topologies, Proc. Amer. Math. Soc. 110  (1990)
1109--1110. 

\bibitem{St} M. Stropel, Locally compact groups, EMS Textbooks in Mathematics 3, European Mathematical Society Publishing House, Z\"{u}rich, 2006. 

\bibitem{S}  T. Szele, On  tbe basic subgroups of abelian p-groups,  Acta Math. Acad. Sci. Hungar. 5 (1954) 129--l41.

\bibitem{Tk} M.~Tkachenko, Countably compact and pseudocompact topologies on free abelian groups, {Izv. Vyssh. Uchebn. Zaved. Mat.} (1990) 
68--75 (in Russian);  English transl. in: Soviet Math. (Iz. VUZ)  34  (1990) 
79--86. 
\end{thebibliography}
\end{document}